\newtheorem{lemma}{Lemma}[section]
\newtheorem{proposition}[lemma]{Proposition}
\newtheorem*{theorem}{Theorem}
\newtheorem{corollary}[lemma]{Corollary}
\newtheorem{predf}[lemma]{Definition} 
\newenvironment{df}{\begin{predf}\rm}{\end{predf}}
\newtheorem{preremark}[lemma]{Remark}  
\newenvironment{remark}{\begin{preremark}\rm}{\end{preremark}}
\newtheorem{preremark0}[lemma]{Remark}  
\newtheorem{preexample}[lemma]{Example}
\newenvironment{example}{\begin{preexample}\rm}{\end{preexample}}
\newtheorem*{prenotation}{Notation}
\newtheorem*{preproblem}{Problem}
\newtheorem*{preconjecture}{Conjecture}
\newtheorem{atheorem}{Theorem}
\def\bichord{bichord\xspace}
\def\jalmax{j_{\mathrm{almax}}}
\renewcommand\flat[1]{|\pi_0 G(#1)|}
\numberwithin{equation}{section}
\def\s{\mathfrak{s}}
\def\btoab{\cA}
\newcommand\presh[2]{#2^{#1}}
\newcommand\Mod[1]{\text{$#1$-Mod}}
\def\Ch{\mathrm{Ch}}
\def\Sp{\mathbf{Sp}}
\def\u{\mathfrak{u}}
\def\la{\leftarrow}
\def\Deltainj{\Delta_{\mathrm{inj}}}
\def\Deltainjaug{\Delta_{\mathrm{inj*}}}
\def\op{\mathrm{op}}
\def\source{s}
\def\target{t}
\def\Tot{\operatorname{Tot}}
\def\Setp{\mathbf{Set}_\bullet}
\def\Set{\mathbf{Set}}
\def\Topp{\mathbf{Top}_\bullet}
\def\Top{\mathbf{Top}}
\def\boldast{\mathbf{\ast}}
\def\Kh{Kh}
\def\jalmax{{j_{\mathrm{almax}}}}
\def\G{\mathcal{G}}
\renewcommand{\boxed}[1]{\text{\fboxsep=.2em\fbox{\m@th$\displaystyle#1$}}}
\newcommand{\cube}[1]{\mathbf{2^{#1}}}
\def\cero{\vec{0}}
\def\uno{\vec{1}}
\newcommand{\bN}{\mathbb{N}}
\newcommand{\bR}{\mathbb{R}}
\newcommand{\bZ}{\mathbb{Z}}
\newcommand\lra{\longrightarrow}
\newcommand\lla{\longleftarrow}
\newcommand\hocolim{\operatorname*{hocolim}}
\def\Id{\mathrm{Id}}
\newcommand{\hocofib}{\ensuremath{\mathrm{hocofib}}}
\renewcommand{\geq}{\geqslant}
\renewcommand{\leq}{\leqslant}
\newcommand{\cA}{\mathcal{A}}
\newcommand{\cB}{\mathcal{B}}
\newcommand{\cC}{\mathcal{C}}
\newcommand{\cD}{\mathcal{D}}
\newcommand{\cL}{\mathcal{L}}
\newcommand{\cO}{\mathcal{O}}
\newcommand{\cX}{\mathcal{X}}
\newcommand{\cY}{\mathcal{Y}}
\def\sd\operatorname{\mathrm{Sd}}
\title{Almost-extreme Khovanov spectra}
\author{Federico Cantero Mor\'an and Marithania Silvero}
\thanks{Both authors were supported by the Spanish projects MTM2016-76453-C2, MDM-2014-0445 and FEDER. The first author was partially supported by project PID2019-108936GB-C21. The second author was partially supported by project US-1263032.}
\email{federico.j.cantero@gmail.com, marithania@us.es}
\address{Departamento de Matem\'aticas, Universidad Aut\'onoma de Madrid and ICMAT, Madrid, Spain}
\address{Departamento de Matem\'atica Aplicada, Universidad de Sevilla and IMUS, Sevilla, Spain}
\subjclass[2010]{Primary 57M25, 55P42}
\begin{document}
\begin{abstract}
We introduce a functor from the cube to the Burnside $2$-category and prove that it is equivalent to the Khovanov spectrum given by Lipshitz and Sarkar in the almost-extreme quantum grading. We provide a decomposition of this functor into simplicial complexes. This decomposition allows us to compute the homotopy type of the almost-extreme Khovanov spectra of diagrams without alternating pairs.
\end{abstract}
\maketitle

\section{Introduction}
Khovanov homology is a powerful link invariant introduced by Mikhail Khovanov in \cite{Khovanov00} as a categorification of the Jones polynomial. More precisely, given an oriented diagram $D$ representing a link $L$, he constructed a finite $\bZ$-graded family of chain complexes
\[\xymatrix{\ldots\ar[r]& C^{i,j}(D)\ar[r]^-{d_i} & C^{i+1,j}(D)\ar[r]^-{d_{i+1}} & C^{i+2,j}(D)\ar[r]&\ldots}\]
whose bigraded homology groups, $\Kh^{i,j}(D)$, are link invariants. The groups $\Kh^{i,j}(L)$ are known as \textit{Khovanov homology groups} of $L$, and the indexes $i$ and $j$ as \emph{homological} and \emph{quantum gradings}, respectively. 

In \cite{LSoriginal} Lipshitz and Sarkar refined this invariant to obtain, for each quantum grading $j$, a spectrum $\cX_D^j$ whose stable homotopy type is a link invariant and whose cohomology is isomorphic to the Khovanov homology of the link, i.e., $H^{*}(\cX_D^j)\cong \Kh^{*,j}(L)$. Together with Lawson \cite{LLS15} they gave a neat construction of this spectrum as the realization of a $2$-functor $F^j$ from the cube category to the Burnside $2$-category (see also \cite{LLS2017cubo,LS-refinements}). 

Shortly after, another geometric realization of the (minimal) extreme Khovanov homology was given \cite{GMS} in terms of an independence simplicial complex constructed from the link diagram. This construction was later shown to coincide with the one of Lipshitz and Sarkar \cite{CS} as follows: functors from the cube to the Burnside $2$-category can be understood as generalizations of simplicial complexes; such a functor is a simplicial complex if it factors through the category of sets and takes values on singletons and the empty set. 

On the other hand, the Khovanov spectrum of a link in its maximal extreme quantum grading is Spanier-Whitehead dual to that of its mirror image in minimal grading. This construction was exploded in \cite{JACO} to make explicit computations of the Khovanov spectrum in (maximal) extreme quantum grading.

In 2018 the second author, together with Przytycki applied these techniques one step further, to the almost-extreme quantum grading. They restricted to the study of $1$-adequate link diagrams, i.e., those whose $1$-resolution contains no chords with both endpoints in the same circle (the extremal homology of these diagrams has been computed in \cite{Jozef-Rad, Rad2,Turaev1}), and built a pointed semi-simplicial set whose homology is isomorphic to the almost-extreme Khovanov homology of the diagram:
\begin{theorem}[\cite{PS}] If $D$ is a $1$-adequate link diagram, then
\begin{itemize}
\item[(i)] The functor $F_D^{\jalmax}$ gives rise to a pointed semi-simplicial set.
\item[(ii)] The realization of $F_D^{\jalmax}$ has the homotopy type of a wedge of spheres and possibly a copy of a (de)suspension of $\bR P^2$.
\end{itemize}
\end{theorem}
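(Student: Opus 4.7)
The plan is to first describe the functor $F_D^{\jalmax}$ explicitly at each vertex of the cube, then reinterpret it as a semi-simplicial set, and finally analyze the homotopy type of its realization.

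For (i), at each vertex $v\in\{0,1\}^n$ the chain module in bigrading $(|v|,\jalmax)$ is freely generated by enhanced Kauffman states labelling the circles of the $v$-resolution with $+$ or $-$ so that the total quantum grading equals $\jalmax$. Solving this grading constraint together with the 1-adequacy hypothesis restricts the contributing vertices to a controlled subset of the cube, and on each such vertex the set of states is enumerated by a specific combinatorial datum (a distinguished circle of $G(\vec{1})$ together with the pattern of chords that have been switched to the 0-resolution). I would then inspect the saddle-cobordism spans that define $F^{\jalmax}$ on edges of the cube and verify that, under these identifications, the left legs of the spans become identities or basepoint inclusions. Consequently the functor factors through pointed sets and produces a pointed semi-simplicial set, giving (i).

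For (ii), the resulting pointed semi-simplicial set admits a combinatorial description directly in terms of the chord diagram $G(\vec{1})$. I would next show that its geometric realization is homotopy equivalent to a simpler \emph{Lando-type} complex built from circles and chords of $G(\vec{1})$, echoing the independence-complex description of the extreme case in \cite{GMS}. Discrete Morse theory with an acyclic matching that cancels simplex pairs indexed by chords meeting ``generically'' would then reduce the complex to a core controlled by small local chord configurations.

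The surviving core should be assembled from spheres contributed by pairs of chords that share a circle in a non-trivial way, together with possibly a single exceptional configuration. This exception arises when two such chords twist against one another so that the induced $\mathbb{Z}/2$-action on an attaching $S^1$ is the antipodal map; the quotient of the attaching cell is then $\RP^2$, and an appropriate (de)suspension places it at the correct stable dimension. One then argues that a 1-adequate diagram can accommodate at most one such twisted configuration in the core, so the realization is a wedge of spheres plus at most one (de)suspended $\RP^2$, proving (ii).

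The principal obstacle is the last step: choosing a Morse matching compatible with the cubical structure of $F_D^{\jalmax}$ and proving that the only non-spherical summand that can survive is a single (de)suspension of $\RP^2$. Ruling out higher-dimensional Moore spaces, lens-space-type summands, or multiple $\RP^2$ summands requires a careful case analysis of the local configurations of chords in $G(\vec{1})$ permitted by the 1-adequacy hypothesis, and is where the real content of the theorem lies.
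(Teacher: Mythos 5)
Part (i) of your proposal follows the same route as the paper's own treatment (the theorem itself is quoted from \cite{PS}; the paper re-derives part (i) in Proposition \ref{prop:3} and Corollary \ref{corPS}). The precise criterion you are groping for is: the span $F_{u>v}$ fails to be free exactly when $u\succ_i v$ is a splitting with $\Phi(u)=\Phi(v)=0$, since then a circle $z\in\cO_i(u)$ is sent to the two-element set $\cO_i(v)$; $1$-adequacy means $D(\uno)$ has no monochords, so every surgery out of $\uno$ is a merging, hence $\Phi(v)\geq 1$ for all $v\neq\uno$ and the bad case never occurs. Your sketch is correct in spirit but should name this criterion explicitly rather than gesture at ``inspecting the left legs of the spans''.

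Part (ii) is where the proposal is not a proof, and you acknowledge as much: the Morse matching, the identification of the surviving ``core'', and the claim that at most one exotic summand survives are all stated as intentions. More seriously, the mechanism you propose for the $\RP^2$ summand is the wrong one. You attribute it to a ladybug-type configuration in which two chords twist so that a $\mathbb{Z}/2$-action on an attaching $S^1$ is antipodal. But a $1$-adequate diagram has no monochords in $D(\uno)$, hence no alternating pairs, and the ladybug matching never enters the definition of $F^{\jalmax}_D$ on such diagrams --- this absence is precisely why the functor factors through $\Setp$ in part (i), so your explanation of (ii) is in tension with your own proof of (i). In \cite{PS} the projective plane arises instead from the degree-$2$ phenomenon in mergings: for $u\succ_i v$ a merging with $\cO_i(u)=\{z_1,z_2\}$, both circles map to $v_+$, so at the chain level $z_1+z_2\mapsto 2v_+$; the resulting degree-$2$ attaching map, governed by the presence of an odd cycle in (a reduced form of) the state graph $\cG(\uno)$, i.e.\ by non-bipartiteness, is what produces the (de)suspension of $\RP^2$ and the $\bZ/2$ torsion. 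Any completion of your plan must locate the $\bZ/2$ there, and the case analysis showing that at most one such summand can occur is exactly the content of \cite{PS} that your proposal defers.
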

Their construction relates to the functor of Lawson, Lipshitz and Sarkar as follows: the category of pointed sets includes into the Burnside $2$-category, and if a functor from the cube to the Burnside $2$-category factors through it, then it gives rise to a pointed (augmented) semi-simpicial set.

Simplicial complexes and pointed semi-simplicial sets are simpler objects than functors to the Burnside $2$-category, to which many classical tools can be applied, so these results prompt the question
\begin{quote}
\emph{Are there simpler models of the spectra of Lipshitz and Sarkar in the almost-extreme quantum grading?}
\end{quote}
Unfortunately, it turns out that $F^\jalmax_D$ gives rise to a pointed semi-simplicial set if and only if $D$ is $1$-adequate. However, in this paper we overcome this problem by introducing a new functor $M_D$, whose realization coincides with that of $F^\jalmax_D$,
and gives rise to an (augmented) pointed semi-simplicial set in a broader number of cases. This is achieved by constructing a natural transformation between $M_D$ and $F^{\jalmax}_D$ and proving the following result:
\begin{atheorem}\label{thm:a} The natural transformation is a homology isomorphism, and therefore the realizations of $M_D$ and $F^{\jalmax}_D$ are homotopy equivalent.
\end{atheorem}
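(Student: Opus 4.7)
The plan is to reduce the homology-isomorphism assertion to a quasi-isomorphism of chain complexes, and then upgrade this to a homotopy equivalence of spectra by a stable Hurewicz argument. Recall that the realization of any $2$-functor from the cube $\cube{n}$ to the Burnside $2$-category is a finite CW spectrum whose cellular cochain complex is the Khovanov-style totalization of the functor, with one free summand for each element of each vertex set. A natural transformation between two such functors induces a chain map between the two totalizations, and the induced map of realizations is an integral homology isomorphism if and only if this chain map is a quasi-isomorphism.

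The first substantial step is therefore to show that the chain map induced by the natural transformation $M_D\Rightarrow F^{\jalmax}_D$ is a quasi-isomorphism. I would attack this vertex by vertex on the cube: both $M_D$ and $F^{\jalmax}_D$ assign to each vertex a finite set indexing the cells of the almost-extreme Khovanov cochain complex, and the natural transformation acts as an equivariant map of these sets. My strategy is to realize $M_D$ as the critical complex of a discrete-Morse-type matching on $F^{\jalmax}_D$: the cells of $F^{\jalmax}_D$ not hit by the natural transformation are paired up through the Burnside edge correspondences into manifestly acyclic blocks, so that the cofiber of the induced chain map becomes contractible.

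The main obstacle is the compatibility of the matching with the edge correspondences of the Burnside $2$-functor: one must verify not only that the vertex-set quotients are acyclic, but that the pairing is preserved by the correspondences between adjacent vertices, so that the cofiber is globally acyclic rather than merely vertex-wise trivial. This is where the specific combinatorics of monochords and bichords in the $1$-resolution of $D$ must be exploited, and I expect the argument to proceed by a case analysis on how these chords interact with each direction of the cube, combined with a filtration by subcube dimension to assemble the local pairings into a global quasi-isomorphism.

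Once the homology-isomorphism assertion is in hand, the passage to a homotopy equivalence of spectra is automatic: the cofiber of the induced map of realizations is a bounded-below finite CW spectrum with vanishing integral homology, so the stable Hurewicz theorem forces all of its stable homotopy groups to vanish, and the natural transformation therefore realizes to a weak, hence a homotopy, equivalence.
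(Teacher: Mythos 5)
Your reduction of the statement to a chain-level claim, and the final upgrade from a homology isomorphism to a homotopy equivalence of spectra via the stable Whitehead/Hurewicz argument, both match the paper (this last step is exactly Remark \ref{remark:spectraequiv}). The problem is the middle step. You propose to exhibit $M_D$ as the critical complex of a discrete-Morse-type matching on $F^{\jalmax}_D$, pairing up ``the cells of $F^{\jalmax}_D$ not hit by the natural transformation'' into acyclic blocks. This misreads the shape of the natural transformation $\gamma\colon M_D\to F^{\jalmax}_D$: it is not an inclusion of a subcomplex, and there are no unhit cells to cancel. At a vertex $u$ with $\Phi(u)=0$ the two vertex sets have \emph{equal} cardinality, because $G(u)$ is a forest (Lemma \ref{alternatingcycles}), so $|Z(u)|=|\pi_0 G(u)|+|E(u)|$; at $\Phi(u)=1$ both are singletons, and for $\Phi(u)>1$ both are empty. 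The span $\gamma_u$ sends a component $C$ to $\sum_{z\in C}z$ and an edge $e$ to $\sum_{z\in e_+}z$, i.e.\ each generator of $M(u)$ goes to a \emph{sum} of generators of $F(u)$, so the induced map $C_*(M;\bZ)\to C_*(F;\bZ)$ is a square integer matrix in each degree, not a cellular inclusion with leftover cells. A Morse matching compatible with the edge correspondences is therefore not the right tool, and the ``main obstacle'' you describe (globalizing local pairings across the cube) is an obstacle to a strategy that does not apply here.

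The argument the paper uses is both more elementary and stronger: it shows that $\gamma_*$ is an \emph{isomorphism} of chain complexes (Proposition \ref{propisom}), not merely a quasi-isomorphism, by writing down an explicit inverse of each vertex-wise map --- essentially the observation that for a forest the vectors $\sum_{z\in e_+}z$ for $e\in E(u)$ together with the vectors $\sum_{z\in C}z$ for $C\in\pi_0 G(u)$ form a $\bZ$-basis of $\bZ\langle Z(u)\rangle$. Since $\gamma$ is already known to be a natural transformation (so $\gamma_*$ is a chain map), a vertex-wise isomorphism is automatically a chain isomorphism, and no filtration by subcube dimension or case analysis on chord interactions is needed at this stage; that combinatorial work lives in the verification that $M$ and $\gamma$ are well defined (Lemmas \ref{Mwelldefined} and \ref{NatTransWellDefined}), which your proposal takes as given. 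To repair your write-up, replace the matching argument by the change-of-basis computation.
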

The functor $M_D$ can be understood as a ``change of basis'' of $F_D^{\jalmax}$: The relevant generators of the Khovanov chain complex in its almost-extreme degree (which is the same as the chain complex of $F^\jalmax_D$) are indexed by the circles of the state resolutions of $D$, whereas the generators of the chain complex associated to $M_D$ are indexed by the edges and connected components of certain graph whose vertices are the circles of the state resolutions. A crucial property at the almost-extreme quantum grading is that the relevant involved graphs are forests, and therefore both basis have the same size (as it should be).

The functor $M_D$ is simpler than $F^j_D$ in the sense that it takes values in morphisms sending generators to (multiples of) generators, whereas $F^j_D$ takes values in morphisms sending generators to sums of possibly different generators. 

This simplified new basis allows us to decompose $M_D$ in terms of independence simplicial complexes of simpler link diagrams (Proposition \ref{propcofibreseq} and Remark \ref{remarkMayerVietoris}). The last part of the paper is devoted to make explicit computations using this decomposition.

One of the advantages of $M_D$ over $F^\jalmax_D$ is that it gives rise to an augmented pointed semi-simplicial set if and only if the $1$-resolution of the diagram contains no alternating pairs (i.e., two chords whose endpoints alternate along the same circle). Moreover, the forementioned decomposition allows us to compute the homotopy type of the realization of $M_D$ in these cases:
\begin{atheorem}\label{thm:b} Let $D$ be a diagram whose $1$-resolution contains no alternating pairs.
\begin{itemize}
\item[(i)] The functor $M_D$ gives rise to an augmented pointed semi-simplicial set. 
\item[(ii)] If $D$ is not $1$-adequate then the realization of $M_D$ is homotopy equivalent to a wedge of spheres.
\end{itemize}
\end{atheorem}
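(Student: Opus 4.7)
The plan is to combine Theorem~\ref{thm:a} (which reduces us to computing the homotopy type of the realisation of $M_D$ itself) with the decomposition of $M_D$ in terms of independence simplicial complexes of simpler diagrams provided by Proposition~\ref{propcofibreseq} and Remark~\ref{remarkMayerVietoris}, together with the classical fact that the independence complex of a forest has the homotopy type of a wedge of spheres.

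For part~(i), recall that a $2$-functor from the cube to the Burnside $2$-category factors through the category of augmented pointed sets precisely when every span $A\leftarrow B\to C$ in its image has injective left leg, in which case the span is equivalent to the partial map $\ell(B)\hookrightarrow A\to C$. Since the generators of $M_D$ are indexed by the edges and connected components of forests whose vertices are the circles of the state resolutions, the failure of the left leg of a face map of $M_D$ to be injective must arise from two chords alternating along a common circle of an intermediate resolution. A direct, local analysis of the elementary moves in $M_D$ therefore shows that the no-alternating-pairs hypothesis forces every span in its image to correspond to a (partial) map of pointed sets, exhibiting $M_D$ as an augmented pointed semi-simplicial set.

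For part~(ii), we argue by induction on the number of monochords in the $1$-resolution of $D$, which is at least one by the not-$1$-adequate hypothesis. Fixing a monochord $c$, Proposition~\ref{propcofibreseq} yields a cofibre sequence relating $|M_D|$ to the realisations of independence simplicial complexes of strictly simpler diagrams (having fewer chords and still satisfying the no-alternating-pairs hypothesis). If one of these smaller diagrams is not $1$-adequate, we apply the inductive hypothesis; if it is $1$-adequate, we invoke the theorem of Przytycki-Silvero quoted in the introduction, and a close inspection of their construction shows that the $\RP^2$ summand appearing there can only arise from an alternating pair, and is therefore excluded under our hypothesis. Both terms of the cofibre sequence are then wedges of spheres, and it remains to show that the cofibre is again a wedge of spheres.

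The main obstacle is precisely this last step: cofibres of maps between wedges of spheres are not, in general, wedges of spheres. The simpler structure of $M_D$ (sending generators to multiples of generators, rather than to sums) translates the attaching map into a concrete simplicial map between independence complexes of forests, and we expect the no-alternating-pairs hypothesis to be exactly what forces this map to be a wedge sum of null-homotopic maps and inclusions of wedge summands, in each case producing a wedge of spheres as cofibre. A secondary but essential technical point is to verify that the no-alternating-pairs condition is preserved by the chord-removal operation underlying the decomposition, so that the induction is well-posed.
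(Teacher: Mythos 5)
Your part~(i) is essentially the paper's argument (Proposition~\ref{prop:3}): $M_{u>v}$ fails to be free exactly when $\Phi(u)=0$, $\Phi(v)=1$ and $|\cL(v)|=2$, and this configuration is excluded precisely by the absence of alternating pairs in $D(\uno)$. That part is fine.

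For part~(ii) there are genuine gaps, and the central one is the step you yourself flag: you never show that the connecting maps in your cofibre sequences are wedges of null-homotopic maps and summand inclusions, you only ``expect'' it. The paper does not prove any such general statement either; instead it runs specific skein sequences \eqref{skeinloop}--\eqref{skeinX} chosen so that one of the two outer terms is contractible (Lemmas~\ref{lemma:2free}, \ref{lemma:reduccion3}, \ref{lemma:disk-polygon}, Corollaries~\ref{soloZ} and \ref{corollary:freemonochord}) or so that the two outer spheres sit in dimensions forcing the map to be null or split (Lemma~\ref{lemma:disk-disk}), after a long chain of reductions to ``simple'' and ``super-simple'' diagrams and explicit identifications of the relevant Lando graphs with paths and cycles. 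Without an argument of this kind your induction does not close. Two further points are actually false as stated. First, the $\RP^2$ summand in \cite{PS} cannot ``only arise from an alternating pair'': $1$-adequate diagrams have no monochords at all, hence no alternating pairs, and yet they are exactly the diagrams for which \cite{PS} produces $\RP^2$ summands (these come from parallel bichords and the ladybug data); what excludes $\RP^2$ in Theorem~\ref{thm:b}(ii) is the not-$1$-adequate hypothesis together with the analysis of the posets $Z^b_D$, not the no-alternating-pairs hypothesis. Second, the no-alternating-pairs condition is \emph{not} preserved by setting a chord to $0$: smoothing a bichord merges two circles and converts other bichords into monochords that typically do alternate --- this is exactly what happens in the proof of Lemma~\ref{lemma:reduccion3}, which is why the paper passes there to the extreme posets $X_{D[b=0]}$ rather than to $M_{D[b=0]}$. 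Relatedly, the terms appearing in Proposition~\ref{propcofibreseq} and Remark~\ref{remarkMayerVietoris} are posets $X_{D'}$ dual to independence complexes of Lando graphs that need not be forests; that $|X_{D'}|$ is always a wedge of spheres is precisely the open conjecture of \cite{JACO} recalled in Remark~\ref{remark:cone}, so it cannot be invoked as a ``classical fact'' --- the paper only verifies it for the particular graphs arising after its reductions.
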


In \cite{JACO} the second author conjectured (and proved for several cases) that the extreme Khovanov spectrum is always a wedge of spheres. If this conjecture were true, then the above decomposition would give an upper bound for the cone length of the spectrum $\cX^\jalmax_D$ (Remark \ref{remark:cone}). This bound seems to be far from optimal, as the computations of this paper and \cite{PS} produce spectra of cone length at most $2$, so we rise the following question:
\begin{quotation}
\emph{Are there diagrams whose almost-extreme Khovanov spectrum is not homotopy equivalent to a wedge of spheres and possibly (de)suspensions of projective planes?} 
\end{quotation}

The structure of the paper is as follows. In Sections 2 and 3 we recall the categorical notions that will be used along the paper. In Section 4 we develop some results on link diagrams which allow, in Section 5, to introduce the functor $M_D$ and prove Theorem \ref{thm:a}. In Section 6 we prove the first part of Theorem \ref{thm:b}. In Section 7 we decompose $M_D$ in terms of independence simplicial complexes and present three skein sequences which allow us to determine, in Section 8, the homotopy type of $M_D$ for all links with no alternating pairs, proving the second part of Theorem \ref{thm:b}. 

\subsection*{Acknowledgments} The authors thank Carles Casacuberta for his guidance and advice at early stages of this project.

\section{The Burnside 2-category}

A $2$-category is a higher category that generalizes the notion of category, in the sense that on top the objects and morphisms there are also $2$-morphisms. More precisely, a $2$-category consists of the data of
\begin{enumerate}
    \item a collection of objects,
    \item for each pair of objects $X,Y$, a category of morphisms $\hom(X,Y)$,
    \item for each object $X$, an object $\Id_X$ of $\hom(X,X)$ and
    \item for each triple of objects $X,Y,Z$, a composition functor $$\hom(Y,Z)\times \hom(X,Y)\to \hom(X,Z)$$
\end{enumerate}
which is associative and unital. The objects in $\hom(X,Y)$ are called \emph{$1$-morphisms} and the morphisms in $\hom(X,Y)$ are called \emph{$2$-morphisms}.

Recall that given two finite sets $X,Y$, a \emph{span} from $X$ to $Y$ is
a triple $(Q,\source,\target)$ where $Q$ is a finite set, and $\source$
and $\target$ are a pair of functions
$X\overset{\source}{\lla} Q\overset{\target}{\lra} Y$.

We say that a span is \emph{free} (resp. \emph{very free}) if the source map $s$ is injective (resp. bijective).

Given two spans  $X\overset{\source}{\longleftarrow} Q\overset{\target}{\longrightarrow} Y$ and $X\overset{\source'}{\longleftarrow} Q'\overset{\target'}{\longrightarrow} Y$, a \emph{fibrewise bijection} between them is a bijection $\tau\colon Q\to Q'$ such that $\source'\circ\tau=\source$ and $\target'\circ\tau=\target$:
\[\xymatrixcolsep{32pt}\xymatrixrowsep{10pt}\xymatrix{
    &Q\ar[ld]_{\source}\ar[rd]^\target\ar[dd]^{\tau}&\\
    X && Y \\
    &Q'\ar[lu]^{\source'}\ar[ru]_{\target'}&
}\]

The composition of two fibrewise bijections of spans is the composition of bijections,
and the identity morphism of a locally finite span is the identity bijection. This defines a \textit{category of spans} from the set $X$ to the set $Y$, which we note by $\cB(X,Y)$.

\begin{df}The Burnside $2$-category, denoted by $\cB$, is the $2$-category whose objects are finite sets, and the category of morphisms from a set $X$ to a set $Y$ is given by $\cB(X,Y)$, the category of spans from $X$ to $Y$; in other words, the $1$-morphisms are given by spans and $2$-morphisms are fibrewise bijections.
\end{df}

Let $X_1, X_2$ and $X_3$ be three sets in $\cB$. Given a span $(Q_1,\source_1,\target_1)$ in $\cB(X_1,X_2)$ and a span $(Q_2,\source_2,\target_2)$ in $\cB(X_2,X_3)$, their composition is given by their fibre product $(Q_1 \times_{X_2} Q_2, s,t)$: \[\xymatrixcolsep{16pt}\xymatrixrowsep{6pt}\xymatrix{
    && Q_1\times_{X_2} Q_2\ar@{-->}[dl]  \ar@{-->}[dr] \ar@/_1.5pc/[ddll]_{\source}  \ar@/^1.5pc/[ddrr]^{\target} && \\
    &Q_1\ar[dl]^{\source_1}\ar[dr]_{\target_1} && Q_2\ar[dl]^{\source_2}\ar[dr]_{\target_2} & \\
    X_1 && X_2 && X_3. }\]

The composition of two fibrewise bijections $\tau_1\colon Q_1 \to
Q_1'$ and $\tau_2 \colon Q_2 \to Q_2'$ is also given by their
fibre product $\tau_1 \times \tau_2 \colon Q_1 \times_{X_2} Q_2
\to Q'_1 \times_{X_2} Q'_2$ as follows

\[\xymatrixcolsep{24pt}\xymatrixrowsep{8pt}\xymatrix{
    &Q_1\ar[dl]\ar[dr]\ar[dd]^{\tau_1} && Q_2\ar[dl]\ar[dr]\ar[dd]^{\tau_2} & \\
    X_1 && X_2 && X_3 \\
    &Q'_1\ar[ul]\ar[ur] && Q'_2\ar[ul]\ar[ur] & \\
}\qquad
\xymatrix{
    & Q_1\times_{X_2} Q_2\ar[dd]^{\tau_1\times\tau_2}\ar[dr]\ar[dl] & \\
    X& & Y,\\
    &Q'_1\times_{X_2} Q'_2,\ar[ur]\ar[ul]&
}\]
and the identity on a set $X$ is the span $X\overset{\Id}{\la} X\overset{\Id}{\to} X$.

\begin{remark}
Note that a span $\s = (Q,\source,\target)$ from $X$ to $Y$ is determined by the matrix whose entries are the sets $\s_{x,y} := \source^{-1}(x)\cap \target^{-1}(y)$, for every $x\in X$, $y \in Y$. Giving a fibrewise bijection $\tau$ from a span $\s$ to a span $\s'$ is the same as giving, for each $x\in X$ and each $y\in Y$, a bijection $\tau_{x,y}$ from $\s_{x,y}$ to $\s'_{x,y}$. The composition of $\s\colon X\to Y$ and $\s'\colon Y\to Z$ is the span $\s''$ with $\s''_{x,z} = \bigcup_{y\in Y}\s_{x,y}\times \s'_{y,z}$. The span $\s$ is determined  by the formal sum
\[\s(x) = \sum_{y\in Y} \s_{x,y}\cdot y,\] for each $x \in X$. If a coefficient $\s_{x,y}$ is not specified, we understand that $\s_{x,y} = \{x\}$ (for example, if $\s(x) = y+z$, then $\s(x) = \{x\} \cdot y + \{x\} \cdot z$). 
\end{remark}

The category of finite sets $\Set$ can be mapped into the category of pointed finite sets $\Setp$ by sending $X$ to $X_+:=X\cup \{\boldast\}$, and a morphism $f\colon X\to Y$ to the morphism $f_+$ that coincides with $f$ on $X$ and sends the basepoint of $X_+$ to the basepoint of $Y_+$. Moreover, the category $\Setp$ sits inside $\cB$ by sending a pointed finite set $X$ to $X\smallsetminus \{\boldast\}$, and a morphism $f\colon X\to Y$ to the span
\[X\smallsetminus \{\boldast\}\hookleftarrow X\smallsetminus f^{-1}(\boldast)\overset{f}{\to} Y\smallsetminus \{\boldast\}.\]

The inclusions $\Set \, \hookrightarrow \, \Setp \, \hookrightarrow \, \cB$ induce equivalences between the following $2$-subcategories of $\cB$:
\begin{itemize}
\item The essential image of $\Setp$ in $\cB$ is the $2$-subcategory of free spans.
\item The essential image of $\Set$ in $\cB$ is the $2$-subcategory of very free spans.
\end{itemize}

Fix now a commutative ring $R$ with unit, and let $\Mod{R}$ be the category of modules over $R$. There is a functor \begin{equation}\label{functorA}
\btoab \colon \cB\to \Mod{R}
\end{equation}
sending the finite set $X$ to the free $R$-module $R\langle X\rangle$, and a span $\s\colon X\to Y$ to the homomorphism $f\colon R\langle X\rangle\to R\langle Y\rangle$ given, for every $x\in X$, by
\[f(x) = \sum_{y\in Y} |\s_{x,y}|\cdot y.\]
Note that every pair of spans connected by a $2$-morphism are sent to the same homomorphism, and therefore the functor $\btoab$ is well-defined.

\section{Cubes of pointed sets and augmented semi-simplicial pointed sets}

The \emph{$n$-dimensional cube} $\cube{n}$ is the partially ordered set (poset) whose elements are $n$-tuples $u=(u_1,\ldots, u_n) \in \{0,1\}^n$ endowed with the standard partial order so that $u\geq v$ if $u_i \geq v_i$ for all $i$.  Following \cite{LLS15}, we regard $\cube{n}$ as the category whose objects are the elements of this poset and, for any two such elements $u,v$, the morphism set $\operatorname{Hom}(u,v)$ has a single element $\varphi_{u,v}$ if $u\geq v$, and is empty otherwise. This category has an initial element $\uno=(1,1,\ldots,1)$ and a terminal element $\cero = (0,0,\ldots,0)$.

The poset map $|\cdot|\colon \cube{n}\to \bN$ given by $|u|=\sum_{i} u_i$ assigns a grading to each vertex in the cube.
We write $u\succ v$ if $u>v$ and $|u-v|=1$, and we write $u\succ_i v$ if, additionally, $u$ and $v$ differ in the $i$th-coordinate, i.e., if $u_i=1$, $v_i=0$ and $u_j = v_j$ for $i\neq j$. 

Given $n \in \mathbb{Z}$, $n\geq -1$, the finite ordinal $[n]$ is the linearly ordered set $\{0<1<\ldots<n\}$. The \textit{augmented semi-simplicial category}, $\Deltainjaug$, has as objects these finite ordinals, and as morphisms injective order-preserving maps between them. The inclusion $\partial_i \colon [n-1]\hookrightarrow [n]$ that forgets the $i$-th element is called $i$th-face map. 
\begin{df}
Given a category $\mathcal{C}$, let $\mathcal{C}^\op$ denote its opposite category, obtained by reversing the morphisms. An \textit{$n$-dimensional cube of pointed (finite) sets} is a functor $F\colon \cube{n} \to \Setp$, while an \textit{augmented semi-simplicial pointed (finite) set} is a functor $X\colon \Deltainjaug^\op\to \Setp$. As usual, we write $X_n$ and $\partial_i$ for $X([n])$ and $X(\partial_i)$, respectively.
\end{df}

These two categories are related by a  functor $\lambda\colon \cube{n}\to \Deltainjaug^\op$ which maps every vertex of the cube $u$ to the ordinal $[|u|-1]$, and every morphism $u\succ_i v$ to the opposite of the $\left(\sum_{j<i} u_i\right)$th-face map.

Taking left Kan extension along the functor $\lambda\colon \cube{n}\to \Deltainjaug^\op$ defines a functor
\begin{equation}\label{Lambda}
\Lambda\colon \Setp^{\cube{n}}\lra \presh{\Deltainjaug^\op}{\Setp},
\end{equation} 
whose value on a cube of pointed sets $F$ is defined explicitely as follows: its set of $k$-simplices is $\Lambda(F)_k = \coprod_{|u|=k+1} F(u)$, for every $u\in \cube{n}$, and the $i$th-face map $\partial_i\colon \Lambda(F)_k\to \Lambda(F)_{k-1}$ is the union $\coprod_{|u|=k+1} F(u>u[i])$, where $u[i]$ is the vertex of the cube $\cube{n}$ obtained by replacing the $(i+1)$th one in $u$ by a zero.

Recall from \cite[Definition 4.1]{LLS2017cubo} that a \textit{strictly unitary lax 2-functor} $F$ from the cube $\cube{n}$ to the Burnside category $\mathcal{B}$ consists of the data:
\begin{enumerate}
\item a finite set $F(v) \in \operatorname{Ob}(\mathcal{B})$, for each $v \in \{0,1\}^n$.
\item a finite span $F(\varphi_{u,v}): F(u) \to F(v)$ in $\mathcal{B}$, for each morphism $\varphi_{u,v}$ in $\cube{n}$.
\item a $2$-isomorphism $F_{u, v, w}$ from $F(\varphi_{v,w})\circ F(\varphi_{u,v})$ to $F(\varphi_{u,w})$, for each decomposition $\varphi_{u,w} = \varphi_{v,w} \circ \varphi_{u,v}$;
\end{enumerate}
satisfying that, for every $u > v > w > z$, the following diagram commutes:
  \[
  \xymatrix@C=10ex{
    F(\varphi_{w,z})\times_{F(w)} F(\varphi_{v,w})\times_{F(v)}F(\varphi_{u,v})\ar[r]^-{\, \operatorname{Id} \times F_{u, v,w} \,} \ar[d]^-{F_{v,w,z} \times \operatorname{Id}}&
    F(\varphi_{w,z}) \times_{F(w)} F(\varphi_{u,w}) \ar[d]_-{F_{u,w,z}} \\
    F(\varphi_{v,z})\times_{F(v)} F(\varphi_{u,v}) \ar[r]^-{F_{u,v,z}} &F(\varphi_{u,z}).\\
  }
  \]

%

In practice, when defining a strictly unitary lax 2-functor, we use the following result, 
from \cite[Section 4]{LLS2017cubo}. We keep the notation $\xymatrix@C=1.5ex{u\ar@{-{*}}[r]&v}$ for the morphism $u \succ v$:

\begin{lemma}\label{lematransformation} A strictly unitary lax $2$-functor from the cube category $\cube{n}$ to the Burnside category $\cB$ is uniquely determined (up to natural isomorphism) by the following data:
\begin{enumerate}[label=(D\arabic*),ref=(D\arabic*)]
\item\label{data:set} for each vertex $v \in \{0,1\}^n$, a finite set $F(v) \in \operatorname{Ob}(\mathcal{B})$;
\item\label{data:correspondence} for each $u\succ v$, a finite span $F(\varphi_{u,v}): F(u) \to F(v)$ in $\mathcal{B}$;
\item\label{data:bijection} for each two-dimensional face 
$\vcenter{\xymatrix@R=0ex@C=1ex{&v\ar@{-{*}}[dr]\\u\ar@{-{*}}[ur]\ar@{-{*}}[dr]&&w\\&v'\ar@{-{*}}[ur]}}$
of the cube
, a $2$-morphism \[F_{u,v,v',w}\colon
  F(\varphi_{v,w})\times_{F(v)} F(\varphi_{u,v})\to
  F(\varphi_{v',w})\times_{F(v')} F(\varphi_{u,v'}),\]
\end{enumerate}
satisfying the following two conditions:
\begin{enumerate}[label=(C\arabic*),ref=(C\arabic*)]
\item\label{condition:matching} for every two-dimensional face
	$\vcenter{\xymatrix@R=0ex@C=1ex{&v\ar@{-{*}}[dr]\\u\ar@{-{*}}[ur]\ar@{-{*}}[dr]&&w\\&v'\ar@{-{*}}[ur]}}$,
  $F_{u,v',v,w}=F_{u,v,v',w}^{-1},$
\item\label{condition:hexagon} for every three-dimensional face
$\vcenter{\xymatrix@R=0.8ex@C=1.5ex{&v\ar@{-{*}}[r]\ar@{-{*}}[dr]&w''\ar@{-{*}}[dr]\\u\ar@{-{*}}[ur]\ar@{-{*}}[r]\ar@{-{*}}[dr]&v'\ar@{-{*}}[ur]\ar@{-{*}}[dr]&w'\ar@{-{*}}[r]&z\\&v''\ar@{-{*}}[r]\ar@{-{*}}[ur]&w\ar@{-{*}}[ur]}}$,
  the following commutes:
  \[
  \xymatrix@C=10ex{
    F(\varphi_{w'',z})\times_{F(w'')}F(\varphi_{v,w''})\times_{F(v)}F(\varphi_{u,v})\ar[r]^-{F_{v,w'',w',z}\times\Id}\ar[d]^-{\Id\times
      F_{u,v,v',w''}}&
    F(\varphi_{w',z})\times_{F(w')}F(\varphi_{v,w'})\times_{F(v)}F(\varphi_{u,v})\ar[d]_-{\Id\times
      F_{u,v,v'',w'}} \\
    F(\varphi_{w'',z})\times_{F(w'')}F(\varphi_{v',w''})\times_{F(v')}F(\varphi_{u,v'})\ar[d]^-{F_{v',w'',w,z}\times\Id}&F(\varphi_{w',z})\times_{F(w')}F(\varphi_{v'',w'})\times_{F(v'')}F(\varphi_{u,v''})\ar[d]_-{F_{v'',w',w,z}\times\Id}\\
    F(\varphi_{w,z})\times_{F(w)}F(\varphi_{v',w})\times_{F(v')}F(\varphi_{u,v'})\ar[r]^-{\Id\times
      F_{u,v',v'',w}}&F(\varphi_{w,z})\times_{F(w)}F(\varphi_{v'',w})\times_{F(v'')}F(\varphi_{u,v''}).
  }
  \]
\end{enumerate}
\end{lemma}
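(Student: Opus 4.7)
The plan is to verify both directions of the claimed correspondence between strictly unitary lax $2$-functors $\cube{n}\to\cB$ and tuples of data (D1)--(D3) subject to (C1)--(C2). From a strictly unitary lax $2$-functor $F$, the data (D1) and (D2) are obtained directly, while (D3) is extracted by composing, for each $2$-dimensional face, the $2$-morphism $F_{u,v,w}$ with the inverse of $F_{u,v',w}$ to produce a $2$-isomorphism
\[F(\varphi_{v,w})\times_{F(v)}F(\varphi_{u,v})\xrightarrow{F_{u,v,w}} F(\varphi_{u,w})\xrightarrow{F_{u,v',w}^{-1}} F(\varphi_{v',w})\times_{F(v')}F(\varphi_{u,v'}).\]
Condition (C1) is then automatic, since $F_{u,v',v,w}$ is by construction the inverse of $F_{u,v,v',w}$, and (C2) is the restriction of the big coherence hexagon of a lax $2$-functor to the $3$-dimensional subface in question.

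For the reverse direction I would, for each pair $u\geq v$, fix a distinguished maximal chain $u=u_0\succ u_1\succ\cdots\succ u_k=v$ (for definiteness, the lexicographically least one) and define $F(\varphi_{u,v})$ as the iterated fibre product of the atomic spans $F(\varphi_{u_{i-1},u_i})$ supplied by (D2). Given any decomposition $u>v>w$, the concatenation of the chosen chains for $u\to v$ and $v\to w$ differs from the chosen chain for $u\to w$ by a finite sequence of transpositions of adjacent atomic arrows, each such transposition taking place inside a $2$-dimensional subface. Each transposition provides, via (D3) tensored with identities on the neighbouring factors, a $2$-isomorphism between the corresponding iterated fibre products, and I define $F_{u,v,w}$ to be the resulting composite.

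The main obstacle is proving that this composite is independent of the chosen sequence of transpositions, so that $F_{u,v,w}$ is well defined and the coherence pentagon for a lax $2$-functor is satisfied. The linear extensions of the interval $[w,u]\subseteq\cube{n}$ form a set acted upon by adjacent transpositions, and any two sequences of transpositions producing the same permutation are connected by (i) cancellation of a transposition with its opposite, and (ii) the braid relation on three consecutive atomic arrows, by a Matsumoto-type argument. Relation (i) is exactly (C1); relation (ii) takes place inside a $3$-dimensional subface and unfolds precisely to the hexagon (C2). Once well-definedness is established, the big pentagon for arbitrary chains $u>v>w>z$ follows by expressing each side as a composite of elementary transpositions and invoking uniqueness. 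This Matsumoto-type combinatorics is the hardest step; the remaining verifications are routine, and uniqueness up to natural isomorphism follows because the only choice made was the preferred chain, and any two such choices are again connected by a sequence of $2$-face swaps.
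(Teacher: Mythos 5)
The paper does not actually prove this lemma: it is quoted from \cite{LLS2017cubo}, so there is no internal proof to compare against. Your overall strategy --- extracting the face data as $F_{u,v,v',w}:=F_{u,v',w}^{-1}\circ F_{u,v,w}$ in one direction, and in the other direction fixing a preferred maximal chain for each $u\geq v$, defining $F(\varphi_{u,v})$ as the iterated fibre product along it, and transporting between chains by elementary swaps of adjacent steps --- is the standard argument and the one used in the cited reference.

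There is, however, a concrete error in the step you yourself single out as the hardest. Two words in the adjacent transpositions $s_1,\dots,s_{k-1}$ representing the same permutation are \emph{not} in general connected by cancellations $s_is_i=1$ and braid moves $s_is_{i+1}s_i=s_{i+1}s_is_{i+1}$ alone; Matsumoto--Tits also requires the commutation moves $s_is_j=s_js_i$ for $|i-j|\geq 2$. (Already $s_1s_3$ and $s_3s_1$ in $S_4$ cannot be joined without them, and the group presented by only the relations you list is infinite for $k\geq 4$, so for cubes of dimension $\geq 4$ your reduction fails as stated.) Geometrically, the $2$-cells of the permutohedron are hexagons \emph{and} squares, and your list accounts only for the hexagons. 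The squares correspond to swapping two disjoint pairs of adjacent steps of a maximal chain; the associated $2$-isomorphisms are of the form $\Id\times\cdots\times\alpha\times\cdots\times\Id$ and $\Id\times\cdots\times\beta\times\cdots\times\Id$ acting on disjoint factors of the iterated fibre product, so their commutation is automatic from the interchange law in $\cB$ --- which is precisely why the lemma imposes no condition beyond \ref{condition:matching} and \ref{condition:hexagon}. Your proof needs to state this third family of relations and observe that it holds for free; with that addition (and the routine remark that backtracking loops are killed by \ref{condition:matching}), the well-definedness argument, and hence the rest of your outline, goes through.
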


\begin{df}\cite{LLS2017cubo}\label{df:laxcube} A \emph{natural transformation} $\alpha \colon F\to G$ between two strictly unitary lax $2$-functors $F,G\colon \cube{n} \to \cB$ is a strictly unitary lax $2$-functor $H\colon \cube{n+1}\to \cB$ such that $H|_{\cube{n}\times \{1\}} = F$ and $H|_{\cube{n}\times \{0\}} = G$. For every $u\in \cube{n}$, we denote by $\alpha_u$ the morphism $H(\varphi_{(u,1),(u,0)})$. For every $u \succ v \in \cube{n}$, we denote $\alpha_{u,v}$ the $2$-morphism $H_{(u,1),(u,0),(v,1), (v,0)}$.
\end{df}

We write $\presh{\cD}{\cC}$ for the category whose objects are strictly unitary lax $2$-functors from $\cD$ to $\cC$ and whose morphisms are natural transformations between them. When $\cC = \cB$ and $\cD = \cube{n}$ the objects are called \emph{Burnside cubes}.  

\subsection{Realizations and totalizations}
Let $\cC$ be a model category, as the category of (pointed) topological spaces $\Top$ (resp. $\Topp$), the category of spectra $\Sp$, or the category $\Ch(R)$ of chain complexes of $R$-modules, with $R$ a commutative ring.

The \emph{totalization} of a functor $F\colon \cube{n}\to \cC$ is the object of $\cC$ given by
\[\Tot F = \hocofib\left(\hocolim \left(F|_{\cube{n}\smallsetminus\{0\}}\right)\to F(0)\right).\]
This defines a functor
\[\Tot\colon \cC^{\cube{n}}\lra \cC.\]

On the other hand, the \emph{relative realization} of a functor $X\colon \Deltainjaug^\op\to \cC$ is the object in $\cC$ given by
\[\|X\| = \hocofib\left(\hocolim \left(X|_{\Deltainjaug\smallsetminus\{[-1]\}}\right)\to X_{-1}\right),\]
which defines a functor
\[\|-\|\colon \presh{\Deltainjaug^\op}{\cC}\lra \cC.\]
A semi-simplicial object $X\colon \Deltainj^\op\to \cC$ may be seen as an augmented semi-simplicial set by defining $X_{-1}=*$, the final object of $\cC$. Its relative realization is the suspension of the realization of $X$:
\[\|X\| \cong  \Sigma|X|.\]

Recall now the functor $\lambda\colon \cube{n}\to \Deltainjaug^\op$ mapping every $u\in \cube{n}$ to $[|u|-1]$, and every morphism $u\succ_i v$ to the opposite of $\left(\sum_{j<i} u_i\right)th$-face map. Since $\lambda$ is cofinal, $\lambda(\cube{n}\smallsetminus\{0\}) = \Deltainjaug\smallsetminus\{[-1]\}$ and $\lambda(\cero) = [-1]$, we have that
\begin{equation}\label{eq:19} \|\Lambda(F)\|\simeq \Tot F.\end{equation}

\subsection{From Burnside cubes to chain complexes}\label{ss:32} Let $R$ be a ring, and let $\Ch(R)$ be the category of chain complexes of $R$-modules. There is a functor
\[K \colon \cB\lra \Mod{R} \lra \Ch(R),\]
where the first functor is $\btoab$, defined in \eqref{functorA},  and the second functor sends an $R$-module to the chain complex given by the $R$-module concentrated at degree $0$.

We define the functor $C_*(-;R)$ as the composition
\[C_*(-;R)\colon \cB^\cube{n}\lra \Ch(R)^\cube{n}\overset{\Tot}{\lra} \Ch(R),\]
where the first functor is the result of composing a $2$-functor $F\colon \cube{n} \to \cB$ from $\cB^\cube{n}$ with $K$.

Its value at the Burnside cube $F$ can also be described as the chain complex whose $k$-cochains are
\[C_k(F;R) = \bigoplus_{u\in \cube{n}, |u|=k} R\langle F(u)\rangle\]
and whose differential is
\[\partial = \sum_{|u|=k, |v|=k-1} (-1)^{\sum_{j<i}u_i}\btoab \circ F(u\succ_i v).\]

Observe that since $\cB\to \Mod{R}$ sends $2$-morphisms to identities, a natural transformation between Burnside cubes induces a morphism of chain complexes.

\subsection{From Burnside cubes to spectra} In \cite{LLS15}, Lawson, Lipshitz and Sarkar gave an explicit construction of the totalization functor for cubes in the Burnside category. They associated, to each cube $F\colon\cube{n}\to \cB$, a spectrum $\Tot F$, and to each map $f \colon F\to G$ of Burnside cubes, a map $\Tot f \colon \Tot F\to \Tot G$, both well-defined up to homotopy. Additionally, the homology $H_*(\Tot F;R)$ was isomorphic to the homology of $C_*(F;R)$, and the map induced by $\Tot f$ on homology was the one induced by $C_*(f;R)$.

\begin{remark}\label{remark:spectraequiv}
If $C_*(f;R)$ is a homology isomorphism then $\Tot f$ is a homotopy equivalence of spectra.   
\end{remark}

On the other hand, in \cite{CS} the authors showed that if a cube $F\colon\cube{n}\to \cB$ factors through some functor $\tilde{F}\colon \cube{n}\to \Setp$, then $\Tot F = \Sigma^\infty \Tot \tilde{F}$, where the homotopy colimit is taken in pointed topological spaces. Therefore, by \eqref{eq:19}, we have:

\begin{proposition}\label{prop:setp} If a Burnside cube $F\colon \cube{n}\to \cB$ in the Burnside category factors through $\Setp$, then the pointed semi-simplicial set $\Lambda(\tilde{F})$ satisfies  $$\Tot F \simeq \Sigma^\infty\displaystyle\|\Lambda(\tilde{F})\|.$$
\end{proposition}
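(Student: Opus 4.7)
The plan is to chain together the two equivalences that have been set up in this section. The first ingredient is the result recalled from \cite{CS} immediately preceding the statement: if $F = \iota \circ \tilde F$ with $\iota\colon\Setp\hookrightarrow\cB$ the embedding, then the Lawson--Lipshitz--Sarkar spectrum satisfies
\[
\Tot F \;\simeq\; \Sigma^\infty \Tot \tilde F,
\]
where the right-hand totalization is now computed in $\Topp$ by viewing the values of $\tilde F$ as discrete pointed spaces. The second ingredient is the formal cofinality equivalence \eqref{eq:19}, applied not to $\Setp$ (which is not a model category in the sense used here) but to $\cC = \Topp$:
\[
\Tot \tilde F \;\simeq\; \|\Lambda(\tilde F)\|.
\]
Composing the two equivalences gives $\Tot F \simeq \Sigma^\infty\|\Lambda(\tilde F)\|$, which is exactly the claim.

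The only point that needs a line of justification is that the object $\Lambda(\tilde F)$ appearing in these two occurrences is literally the same. The functor $\Lambda$ was originally defined in \eqref{Lambda} as a left Kan extension landing in $\presh{\Deltainjaug^\op}{\Setp}$, with the explicit formula $\Lambda(\tilde F)_k = \coprod_{|u|=k+1}\tilde F(u)$ and face maps given by coproducts of the structure maps of $\tilde F$. Since the inclusion $\Setp\hookrightarrow\Topp$ preserves finite coproducts, the same formula computes $\Lambda(\tilde F)$ when $\tilde F$ is regarded as a cube in $\Topp$. Consequently the relative realization $\|\Lambda(\tilde F)\|$ in $\Topp$ is exactly the classical geometric realization of the pointed augmented semi-simplicial set $\Lambda(\tilde F)$, so the notation in the statement is unambiguous and the composite equivalence above gives the result.

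I do not foresee any genuine obstacle: the proposition is a book-keeping combination of \eqref{eq:19} with the \cite{CS} computation, and the main thing to be careful about is distinguishing the two different totalization functors (the spectrum-valued one of \cite{LLS15} and the $\Topp$-valued one) and invoking each in its correct context.
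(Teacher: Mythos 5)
Your argument is exactly the paper's: the proposition is stated as an immediate consequence of the \cite{CS} identification $\Tot F \simeq \Sigma^\infty \Tot \tilde F$ combined with the cofinality equivalence \eqref{eq:19}, which is precisely the composite you describe. The extra paragraph checking that $\Lambda(\tilde F)$ means the same thing in both occurrences is a harmless (and reasonable) bit of bookkeeping that the paper leaves implicit.
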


\begin{remark}
The spectrum $\Tot F$ is denoted $|F|$ in \cite{LLS15}. The chain complex $C_*(F;R)$ is denoted $\Tot F$ in  \cite{LLS2017cubo}. We reserve the notation $\Tot$ for the totalization of a cube, $\|\cdot\|$ for the relative realization of an augmented semi-simplicial object and $| \cdot |$ for the realization of a semi-simplicial object.  
\end{remark}

\section{Knots and graphs}

\subsection{States}

Let $D$ be an oriented link diagram with $n$ ordered crossings $\{c_1 < c_2 < \ldots <c_n\}$, where $n_+$ ($n_-$) of them are positive (negative). A \emph{(Kauffman) state} of the diagram $D$ is an assignation of a label, $0$ or $1$, to each crossing in $D$. The order of the crossings induces a bijection between the set of states of $D$ and the elements of $\cube{n}$ by considering $u \in \cube{n}$ as the state that assigns the label $u_i$ to $c_i$.

Smoothing a crossing $c_i$ of $D$ consists on replacing it by a pair of arcs and a segment connecting them; the way we smooth the crossings depends on its associated label $u_i$, as shown in Figure \ref{Labels}. The result of smoothing each crossing of $D$ according to its label is the chord diagram $D(u)$, a collection of disjoint circles together with some segments that we call $0$- and $1$-chords, depending on the value of the associated $u_i$. We represent $1$-chords as light segments, and $0$-chords as dark ones.

\begin{figure}[h]
\centering
\includegraphics[width = 8.15cm]{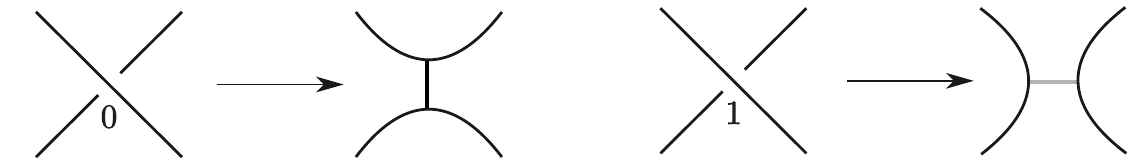}
\caption{\small{The smoothing of a crossing according to its $0$ or $1$ label.}}
\label{Labels}
\end{figure}

We introduce the following notation to refer to the circles and chords of $D(u)$:
\begin{itemize}
\item $Z(u)$ denotes the set of circles of $D(u)$,
\item $E(u)$ denotes the set of $0$-chords of $D(u)$,
\item $\bar{E}(u)$ denotes the set of $1$-chords of $D(u)$,
\item $e_i(u)$ denotes the $i$th chord in $D(u)$,
\item $\cO_i(u)$ denotes the set of circles containing an endpoint of $e_i(u)$ (thus $|\cO_i(u)|$ is either equal to $1$ or $2$). If $|\cO_i(u)|=2$ (resp. $|\cO_i(u)|=1$), we say that $e_i(u)$ is a \textit{bichord} (resp. \textit{monochord}).  
\end{itemize}

Given a state $u$ of $D$, we define its associated \textit{state graph}, $\G(u)$, as the labelled graph obtained by collapsing each circle of $D(u)$ to a vertex so that each chord in $D(u)$ becomes an edge in $\G(u)$; each edge inherits a label, $0$ or $1$, from the associated chord. The circles and chords of $D(u)$ are in bijection with the vertices and edges of $\G(u)$. Therefore, the above notation introduced to refer to the circles and chords of $D(u)$ will be used to refer to the vertices and edges of $\G(u)$. In particular, the set of vertices of $\G(u)$ is $Z(u)$ and the set of edges labeled by $0$ and $1$ are $E(u)$ and $\bar{E}(u)$, respectively.

In this setting, write $G(u)$ for the subgraph obtained after removing the $1$-edges from $\G(u)$. See Figure \ref{Example_graph} for such an example. We consider loops as length-1 cycles.

\begin{figure}
\centering
\includegraphics[width = 10.5cm]{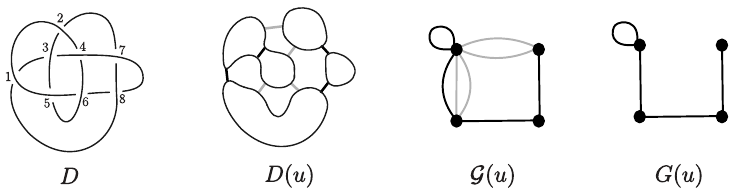}
\caption{\small{A link diagram $D$, the configuration $D(u)$ corresponding to the state $u= (0,1,0,1,1,1,0,0)$ and the associated graphs $\mathcal{G}(u)$ and $G(u)$.}}
\label{Example_graph}
\end{figure}

Recall that given $u,v \in \cube{n}$, we write $u \succ_i v$ if both vectors are equal in all but the $i^{th}$ coordinate, where $u_i = 1$ and $v_i=0$. The partial order $>$ is defined as the transitive closure of the above relation.

If $u\succ_i v$, then $D(v)$ is obtained from $D(u)$ by doing surgery on $D(u)$ along the $1$-chord $e_i(u)$ and adding a new chord $e_i(v)$.  Observe that, depending on the cardinality of $\cO_i(u)$, there are two possible surgeries:

\begin{figure}
\centering
\includegraphics[width = 12cm]{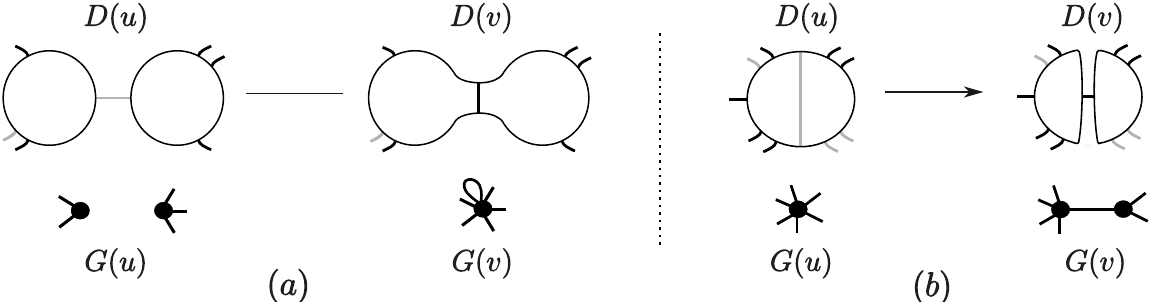}
\caption{\small{A merging and a splitting are illustrated in $(a)$ and $(b)$, respectively. The endpoints of some chords have been drawn in order to show the effect of both surgeries in the graphs associated to the states.}}
\label{Surgery}
\end{figure}

\begin{itemize}
\item If $|\cO_i(u)| = 2$, then $|\cO_i(v)| = 1$ and $D(v)$ is obtained from $D(u)$ by joining two circles into one. The graph $G(v)$ is obtained from $G(u)$ by identifying the two vertices in $\cO_i(u)$ and adding a loop (the edge $e_i(v)$) based on the identified vertex, as shown in Figure \ref{Surgery}$(a)$. We say that $u\succ_i v$ a \textit{merging}.
\item If $|\cO_i(u)| = 1$, then $|\cO_i(v)| = 2$ and $D(v)$ is obtained from $D(u)$ by splitting one circle into two. In this case, $G(u)$ is obtained from $G(v)$ by collapsing the edge $e_i(v)$,  as shown in Figure \ref{Surgery}$(b)$. We say that $u\succ_i v$ is a \textit{splitting}.
\end{itemize}

\begin{remark}
Given two states $u$ and $v$ of $D$ so that $u>v$, with $|u|-|v|=k$, there exist $k!$ possible chains
$$u = u_0 \succ_{i_1} u_1 \succ_{i_2}\ldots\succ_{i_{k}} u_k = v$$
connecting them. However, the total composition of the one-by-one surgeries induced by each of the possible chains does not depend on the chosen chain.
\end{remark}



Given a surgery $u \succ_i v$, we compare $G(u)$ and $G(v)$:
\begin{align*}
|E(v)| &= |E(u)|+1. \\
|Z(v)| &= \begin{cases} |Z(u)|+1 & \text{if $u\succ_i v$ is a splitting,} \\ |Z(u)|-1 & \text{if $u\succ_i v$ is a merging.}\end{cases} \\
|\pi_0 G(v)| &= \begin{cases}
|\pi_0 G(u)| & \text{if $u\succ_i v$ is a splitting, } \\
|\pi_0 G(u)| & \text{if $u\succ_i v$ is a merging and $\cO_i(u)\to \pi_0 G(u)$ is not injective,} \\
|\pi_0 G(u)|-1 & \text{if $u\succ_i v$ is a merging and $\cO_i(u)\to \pi_0 G(u)$ is injective.}
\end{cases} 
\end{align*}

\begin{df} Given a state $u \in \cube{n}$ with $|u| = n-k$ and a chain \begin{equation}\label{chain0tou}\uno = u_0 \succ_{i_1} u_1 \succ_{i_2} \ldots \succ_{i_{k}} u_k = u \end{equation} connecting $u$ to the state $\uno$, we define $\Phi(u)$ as the number of loops in $\{e_{i_{j}}(u_{j})\}_{j=1}^{k}$. In other words, $\Phi(u)$ counts the number of mergings in the chain.
\end{df}

Equalities above allow us to describe $G(u)$ in terms of $G(\uno)$ and $\Phi(u)$:
\begin{align}
\label{eq:4}
|Z(u)| &= |Z(\uno)| + n - |u| - 2\Phi(u), \\ \label{eq:4.2}
\chi(G(u)) &= \chi(G(\uno)) - 2\Phi(u),  \\ \label{eq:4.3}
\flat{\uno} - \Phi(u) &\leq \flat{u} \leq \flat{\uno}
\end{align}

In particular, the above relations imply that $\Phi(u)$ does not depend on the chosen chain connecting the states $\uno$ and $u$.

\begin{lemma}\label{alternatingcycles}
Let $u \in \cube{n}$ be a state of a link diagram $D$.
\begin{enumerate}
\item If $\Phi(u)=0$, then $G(u)$ is a forest, namely, it consists of a collection of $|Z(\uno)|$ contractible components.
\item If $\Phi(u)=1$, then the number of connected components of $G(u)$, $|\pi_0 G(u)|$, is either $|Z(\uno)|$ or $|Z(\uno)|-1$. In addition:
\begin{enumerate}
\item If $|\pi_0 G(u)| = |Z(\uno)|-1$, then there is one single cycle in $G(u)$,
\item If $|\pi_0G(u)| = |Z(\uno)|$, then $G(u)$ contains exactly two cycles $\mathfrak{c}_1$ and $\mathfrak{c}_2$ sharing a common vertex $z$. Moreover, in $D(u)$ the chords corresponding to the edges of $\mathfrak{c}_1$ adjacent to the vertex $z$ alternate  with those of $\mathfrak{c}_2$ along the circle $z$. Therefore the chords corresponding to the edges of $\mathfrak{c}_1$ and the chords corresponding to the edges of $\mathfrak{c}_2$ lie in different regions of $\bR^2\smallsetminus z$.
\end{enumerate}
\end{enumerate}
\end{lemma}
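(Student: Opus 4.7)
The plan is to deduce most of the statement from Euler characteristic bookkeeping and to handle the alternation claim in Part 2(b) by analyzing a specific chain from $\uno$ to $u$. I will combine the identity $\chi(G(u)) = |Z(\uno)| - 2\Phi(u)$ from \eqref{eq:4.2} with the decomposition $\chi(G(u)) = |\pi_0 G(u)| - b_1(G(u))$ and the bound $|\pi_0 G(u)| \leq |Z(\uno)|$ from \eqref{eq:4.3}. If $\Phi(u) = 0$, this forces $b_1(G(u)) \leq 0$, hence $b_1(G(u)) = 0$ and $|\pi_0 G(u)| = |Z(\uno)|$, which is Part 1. If $\Phi(u) = 1$, the same computation yields $b_1(G(u)) \in \{1,2\}$ according to whether $|\pi_0 G(u)|$ equals $|Z(\uno)|-1$ or $|Z(\uno)|$; this establishes the dichotomy and, in the first case, Part 2(a).

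For Part 2(b), I fix a chain $\uno = u_0 \succ u_1 \succ \cdots \succ u_k = u$. Since $\Phi(u) = 1$ is chain-independent, exactly one surgery $u_{j-1} \succ_{i_j} u_j$ is a merging. Because splittings preserve $|\pi_0 G|$ and the chain ends with $|\pi_0 G(u)| = |Z(\uno)|$, this merging must preserve $|\pi_0 G|$ as well, and the case analysis of surgeries recalled earlier in the section then forces the two endpoint circles $z_1, z_2$ of the bichord $e_{i_j}(u_{j-1})$ to lie in the same connected component of $G(u_{j-1})$. Since $\Phi(u_{j-1}) = 0$, Part 1 tells us that this component is a tree, so $z_1$ and $z_2$ are joined by a unique path $P$. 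After the merging, $z_1$ and $z_2$ are identified to a single vertex $z$, the path $P$ closes into a simple cycle $\mathfrak{c}_1$ based at $z$, and the new $0$-chord $e_{i_j}(u_j)$ becomes a loop $\ell = \mathfrak{c}_2$ at $z$, producing the figure-eight structure at the state $u_j$.

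The alternation then follows from the local picture of the merging surgery in $D(u_j)$: the two endpoints of $\ell$ divide the new circle $z$ into two arcs, one inherited from $z_1$ and the other from $z_2$. The endpoint of the edge of $\mathfrak{c}_1$ coming from $z_1$ lies in the $z_1$-arc, and the endpoint of the edge of $\mathfrak{c}_1$ coming from $z_2$ lies in the opposite arc, so the four relevant chord endpoints appear in alternating cyclic order along $z$. The separation of the chords of $\mathfrak{c}_1$ and those of $\mathfrak{c}_2$ into the two components of $\bR^2 \smallsetminus z$ is then a consequence of planarity.

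The main obstacle will be transferring this picture from $u_j$ to the final state $u$, that is, showing that the figure-eight structure and the alternation survive the subsequent splittings $u_j \succ \cdots \succ u_k = u$. Splittings preserve $|\pi_0 G|$ and do not increase $b_1$, so the total cycle count is controlled; however, a splitting of the vertex $z$ could in principle move the basepoint of the figure-eight or convert the loop $\ell$ into a short non-loop cycle. I expect to dispose of this by a case analysis on how the endpoints of $\ell$ and those of the two edges of $\mathfrak{c}_1$ incident to $z$ are distributed between the two children of $z$ created by the splitting, verifying in each case that the two cycles continue to meet at a single vertex whose circle in $D(u)$ carries the four relevant chord endpoints in alternating cyclic order.
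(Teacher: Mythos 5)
Your Euler-characteristic argument for Part 1, for the dichotomy in Part 2, and for Part 2(a) is exactly the paper's and is correct. Your setup for Part 2(b) — locating the unique merging $u_{j-1}\succ u_j$ in a chain, observing that $z_1,z_2$ must lie in the same tree component so that the merging closes the unique path $P$ into a cycle while the new $0$-chord becomes a loop at the merged vertex, and reading off the alternation at the intermediate state $u_j$ from the two arcs inherited from $z_1$ and $z_2$ — also matches the paper's proof.

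The gap is the step you explicitly defer: showing that the figure-eight structure and the alternation survive the subsequent splittings $u_j\succ\cdots\succ u$. This is where the actual content of 2(b) lies, and it is not routine bookkeeping. A splitting at the base vertex $z$ that separates the two endpoints of the loop $\ell$ turns $\ell$ into a bichord and creates a $2$-cycle with the new $0$-edge; if that same splitting also separated the $z$-endpoints of the two edges of the other cycle, the result would be a theta graph rather than two cycles meeting in a single vertex, and the statement would fail. Ruling this out needs the alternation/planarity input, not just the count $b_1=2$. The paper does this by isolating the set $\varepsilon$ of later splitting chords that were bichords between $z_1$ and $z_2$ at the time of the merging: these are precisely the splittings that separate the endpoints of $\ell$, and each one lengthens the loop-cycle by one edge while keeping it based at the vertex carrying the closure of $P$, with the cyclic order of the four relevant endpoints (hence the alternation) preserved; all other splittings leave the configuration qualitatively unchanged. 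Your planned case analysis is the right idea and would reproduce this, but as written the crux of 2(b) is announced rather than proved.
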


 \begin{proof}
 The assertions concerning the homotopy type of $G(u)$ follow from \eqref{eq:4} and \eqref{eq:4.2} after computing the Euler characteristic of $G(u)$ as
\[\flat{u}-\#(\text{cycles in } G(u)) = \chi (G(u)) = \chi(G(\uno)) - 2\Phi(u) = |Z(\uno)|-2\Phi(u).\]

If $\Phi(u)=0$, then $\flat{u} = \flat{\uno} = |Z(\uno)|$ by \eqref{eq:4.3}, and therefore $G(u)$ contains no cycles. In the case (2a), $\flat{u} = |Z(\uno)|-1$, and therefore $G(u)$ has a single cycle, whereas in case (2b), $\flat{u} = |Z(\uno)|$, and therefore there are two cycles in $G(u)$.

We prove now that when $\Phi(u) = 1$ and $\flat{u} = \flat{\uno}$, the chords of $\mathfrak{c}_1$ adjacent to the circle $z$ alternate  with those of $\mathfrak{c}_2$ along the boundary of $z$.

Consider a chain as \eqref{chain0tou}, and let $u_{r-1}\succ_{i_r} u_{r}$ be the states right before and right after the merging is performed, respectively. Write $\mathcal{O}_{i_r}(u_{r-1}) = \{z_1, z_2\}$ and define $\varepsilon$ as the (possibly empty) set containing those edges $e_{i_j}$ in the chain so that $\mathcal{O}_{i_j}(u_{r-1}) = \{z_1,z_2\}$, for $r < j \leq k$.

Since $\Phi(u_{r-1})=0$, we have just shown that $G(u_{r-1})$ consists on $|Z(\uno)|$ disjoint trees. When passing from $u_{r-1}$ to $u_{r}$, vertices $z_1$ and $z_2$ are identified and the $1$-edge $e_{i_r}$ becomes a $0$-loop (i.e., a length-one cycle) in $G(u_{r})$. Figure \ref{cycles}(a)-(b) illustrates this process.

\begin{figure}
\centering
\includegraphics[width = 12.7cm]{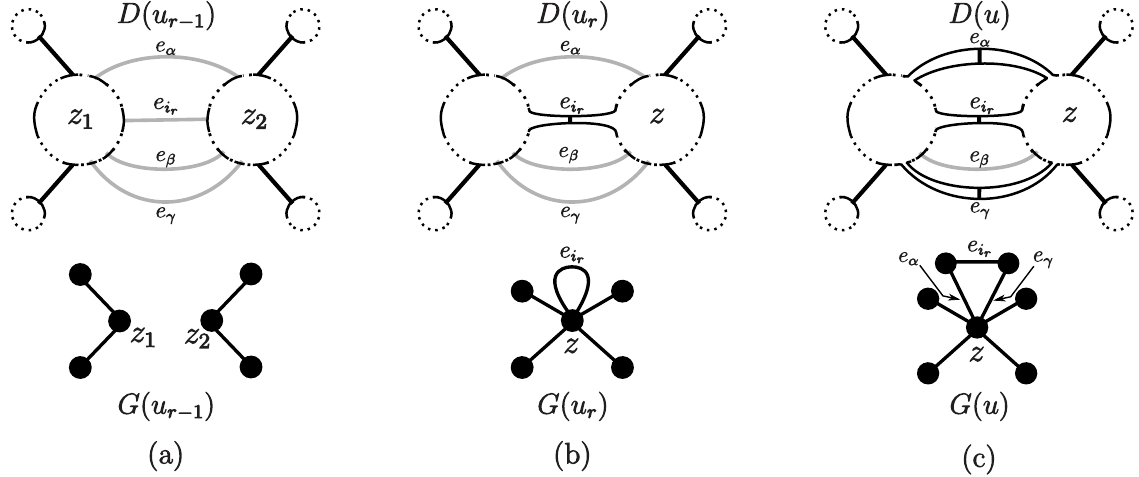}
\caption{\small{Diagrams $D(u_{r-1})$, $D(u_{r})$, $D(u)$ and the associated graphs illustrating the proof of Lemma \ref{alternatingcycles} are shown. Recall that dark (resp. light) chords corresponds to $0$-chords (resp. $1$-chords). In this case $\varepsilon=\{e_\alpha, e_\gamma\}$.}}
\label{cycles}
\end{figure}

The case $(2a)$ corresponds to the case when $z_1$ and $z_2$ belong to different connected components of $G(u_{r-1})$. Since no more mergings are possible, no more cycles are created. Moreover, the length of the (unique) cycle $\mathfrak{c}_1$ in $G(u)$ is $|\varepsilon|+1$ (see Figure \ref{cycles}(c)).

The case $(2b)$ corresponds to the case when $z_1$ and $z_2$ belong to the same connected component: the fact that there is a path connecting $z_1$ and $z_2$ in $G(u_{r-1})$ implies that an additional cycle $\mathfrak{c}_2$ is created when identifying both vertices. As illustrated in Figure \ref{cycles}(b)-(c), the condition on the alternacy of the chords holds. The planarity of $D(u)$ completes the proof.
\end{proof}


\subsection{Enhanced states}

An \emph{enhacement} of a state $u \in \cube{n}$ is a map $x$ assigning a label $+1$ or $-1$ to each of the circles in $Z(u)$; we note by $(u,x)$ the associated \emph{enhanced state}. Write $Z_+(u,x)$ and $Z_-(u,x)$ for the subsets of elements of $Z(u)$ labeled by $+1$ and $-1$, respectively. Define, for the enhanced state $(u,x)$, the integers
$$
h(u,x) =  -n_- + |u|, \quad \quad q(u,x) = n_+ - 2n_- + |u| + |Z_+(u,x)| -|Z_-(u,x)|,
$$
which are the \emph{homological} and \emph{quantum} gradings for Khovanov homology, respectively \cite{LSoriginal}.

Let $x_+$ be the constant enhacement with value $+1$ and, for a given circle $z \in Z(u)$, let $x_z^+$ be the enhacement assigning a positive label to every circle but $z$. Sometimes we will write $u_+=(u,x_+)$.

Define $$j_{\max}(D) = \max\{{q(u,x) \mid (u,x) \mbox{ is an enhanced state of } D}\}$$ and $\jalmax(D) = j_{\max}(D)-2$; we refer to these numbers as \textit{extreme} and \textit{almost-extreme (quantum) gradings} for Khovanov homology of the diagram $D$. It turns out that \begin{align*}
j_{\max}(D) &= q(\uno,x_+) & & \text{and}& \jalmax(D) &= q(\uno,x_z^+), \, \text{ for any } z \in Z(\uno).\end{align*}

\begin{proposition}\label{valueL}
Let $(u,x)$ be an enhanced state of a diagram $D$ with $q(u,x) = j$. Then,
$$|Z_-(u,x)| = \frac{j_{\max}(D)-j}{2} - \Phi(u).$$
\end{proposition}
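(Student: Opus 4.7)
The statement is an immediate algebraic manipulation combining three ingredients already in the paper: the definition of $q(u,x)$, the identification $j_{\max}(D) = q(\uno,x_+)$, and formula \eqref{eq:4} relating $|Z(u)|$ to $|Z(\uno)|$ and $\Phi(u)$.

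My plan is first to rewrite the enhancement contribution to $q(u,x)$ in a form that isolates $|Z_-(u,x)|$. Since $|Z_+(u,x)| + |Z_-(u,x)| = |Z(u)|$, we have $|Z_+(u,x)| - |Z_-(u,x)| = |Z(u)| - 2|Z_-(u,x)|$, so
\[
q(u,x) = n_+ - 2n_- + |u| + |Z(u)| - 2|Z_-(u,x)|.
\]

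Next I would substitute $|Z(u)| = |Z(\uno)| + n - |u| - 2\Phi(u)$ from \eqref{eq:4}. The terms $|u|$ and $-|u|$ cancel, yielding
\[
q(u,x) = n_+ - 2n_- + n + |Z(\uno)| - 2\Phi(u) - 2|Z_-(u,x)|.
\]

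The last step is to recognize the constant piece: plugging $u = \uno$ and $x = x_+$ into the definition of $q$ gives $q(\uno, x_+) = n_+ - 2n_- + n + |Z(\uno)|$, which by definition equals $j_{\max}(D)$. Hence
\[
q(u,x) = j_{\max}(D) - 2\Phi(u) - 2|Z_-(u,x)|,
\]
and solving for $|Z_-(u,x)|$ with $q(u,x) = j$ gives the claimed formula. There is no real obstacle here; the only point worth noting is that \eqref{eq:4} (and hence the value of $\Phi(u)$) is independent of the chosen chain from $\uno$ to $u$, which was already established just after the three displayed relations.
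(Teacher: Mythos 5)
Your proof is correct and is essentially the same as the paper's: both expand the definition of $q(u,x)$, use $|Z_+(u,x)|-|Z_-(u,x)|=|Z(u)|-2|Z_-(u,x)|$, and then apply relation \eqref{eq:4} together with the identification $j_{\max}(D)=q(\uno,x_+)$. The only difference is cosmetic (you compute $q(u,x)$ directly while the paper computes $j_{\max}(D)-j$), so there is nothing to add.
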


\begin{proof}
From the definition of quantum grading, it holds that
\begin{align*} j_{\max}(D)-j & = n+ |Z(\uno)| -|u| - |Z_+(u,x)| + |Z_-(u,x)| \\ & = n + |Z(\uno)| - |u| - |Z(u)| + 2|Z_-(u,x)|. 
\end{align*}
Relation \eqref{eq:4} completes the proof.
\end{proof}

\begin{corollary}\label{corollaryL}
Let $(u,x)$ be an enhanced state of $D$ satisfying $q(u,x) = \jalmax(D)$. Then $\Phi(u)\leq 1$. Moreover,
\begin{enumerate}
\item if $\Phi(u)=0$, then $x = x_z^+$ for some $z \in Z(u)$.
\item if $\Phi(u)=1$, then $x = x_+$.
\end{enumerate}
\end{corollary}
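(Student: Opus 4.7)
The plan is to derive this corollary as an immediate arithmetic consequence of Proposition \ref{valueL}, which already does the heavy lifting by expressing $|Z_-(u,x)|$ in terms of $j_{\max}(D)-q(u,x)$ and $\Phi(u)$.

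First, I would substitute $q(u,x) = \jalmax(D) = j_{\max}(D)-2$ into the formula of Proposition \ref{valueL}. This yields
\[
|Z_-(u,x)| = \frac{j_{\max}(D) - (j_{\max}(D)-2)}{2} - \Phi(u) = 1 - \Phi(u).
\]
Since $|Z_-(u,x)|$ is a nonnegative integer, we immediately get $\Phi(u) \leq 1$, which is the first assertion.

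Next, I would dispatch the two cases by reading off the value of $|Z_-(u,x)|$. If $\Phi(u) = 0$, then $|Z_-(u,x)| = 1$, so exactly one circle $z \in Z(u)$ is labeled $-1$ and all others are labeled $+1$; by the definition of $x_z^+$ given just before Proposition \ref{valueL}, this means $x = x_z^+$. If $\Phi(u) = 1$, then $|Z_-(u,x)| = 0$, so every circle is labeled $+1$, which is precisely the enhancement $x_+$.

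There is no real obstacle here: the corollary is essentially a dictionary translation of Proposition \ref{valueL} at the specific quantum grading $\jalmax(D)$. The only thing to be careful about is checking that the enhancements $x_z^+$ and $x_+$ referenced are those defined immediately before Proposition \ref{valueL}, so no further combinatorial input (e.g.\ from Lemma \ref{alternatingcycles}) is needed at this stage.
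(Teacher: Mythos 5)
Your proof is correct and is exactly the intended argument: the paper states this as an immediate consequence of Proposition \ref{valueL} (with no written proof), and your substitution $j=\jalmax(D)=j_{\max}(D)-2$ giving $|Z_-(u,x)|=1-\Phi(u)$, followed by nonnegativity and the two case readings, is precisely that deduction.
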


Since we are interested in studying the almost-extreme Khovanov complex of a link diagram, in the next section we study the characterization of those states $u$ so that $\Phi(u)$ equals $0$ or $1$.

\subsection{States with $\Phi(u)=0,1$}

We are interested now in characterizing those states taking part in the almost-extreme Khovanov complex, i.e., given a state $u \in \cube{n}$ of $D$, we want to determine whether there exists an enhacement $x$ so that $q(u,x)=\jalmax(D)$, just by looking at $D(\uno)$.

\begin{df} Given $u\in \cube{n}$ a state of $D$ and $a,b,c,d$ chords in $D(u)$, we say that:
\begin{enumerate}
\item $a$ and $b$ are \textit{parallel} if they are bichords with their endpoints in the same circles, i.e., $\cO_a(u) = \cO_b(u)=\{z,z'\}$. 
\item $a,b$ form an \emph{alternating pair} if they are monochords and their endpoints alternate along the same circle, as illustrated in Figure~\ref{Altpairtriple}~$(a)$ (compare to ladybug configuration from \cite{LSoriginal}).
\item $a,b,c$ form an \emph{alternating triple} if $\cO_a(u) = \cO_b(u)=\{z,z'\}$ and their endpoints alternate with the endpoints of the monochord $c$ along $z$. See Figure~\ref{Altpairtriple}~$(b)$.
\item $a,b,c,d$ form a \emph{mixed alternating pair} if the only alternating pairs among them are $(a,b), (b,c)$ and $(c,d)$. See Figure \ref{Altpairtriple}~$(c)$.
\end{enumerate}
\end{df}

\begin{figure}
\centering
\includegraphics[width = 9.1cm]{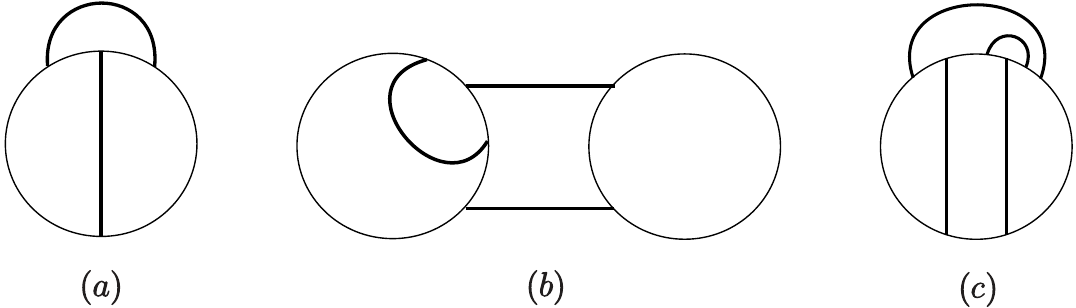}
\caption{\small{Chords forming an alternating pair $(a)$, an alternating triple $(b)$ and a mixed alternating pair $(c)$.}}
\label{Altpairtriple}
\end{figure}

Given $v>u$ states of $D$, we write $D(v)_u$ for the chord diagram having the same circles as $D(v)$, but only the $1$-chords $e_i(v)$ such that $v_i \neq u_i$. 
\begin{remark}\label{miniremark}
If $w>v>u$ are states of $D$, then $e_i(v) \in D(v)_u$ implies that $e_i(w) \in D(w)_u$.
\end{remark}

\begin{proposition}\label{prop:char} Let $u$ be a state of $D$. Then, $\Phi(u)>0$ if and only if $D(\uno)_u$ contains a bichord or an alternating pair, and $\Phi(u)>1$ if and only if  $D(\uno)_u$ contains at least one of the following configurations:
\begin{enumerate}
\item Two non-parallel bichords.
\item An alternating pair and a bichord.
\item An alternating triple.
\item Two disjoint alternating pairs.
\item\label{it:612} A mixed alternating pair.
\end{enumerate}
\end{proposition}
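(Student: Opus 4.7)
The approach is to analyze $\Phi(u)$ by choosing a chain $\uno = u_0 \succ \cdots \succ u_k = u$ and counting mergings, using chain-independence of $\Phi$ to pick convenient orderings. A chord smoothing is a merging if and only if the chord is a bichord at that step, and the fundamental mechanism is that smoothing one monochord of an alternating pair turns the other into a bichord by the alternacy. For the first statement, the ``if'' direction is immediate (smooth a bichord, or one member of an alternating pair, first); the ``only if'' direction follows by induction on the number of chords: if $D(\uno)_u$ contains neither a bichord nor an alternating pair, every chord is a monochord that does not interleave with any other, so smoothings are splittings that preserve the non-interleaving property, and therefore $\Phi(u) = 0$.

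For the backward direction of the second statement, each of the five configurations admits a chain producing at least two mergings. Configurations (1) and (2) yield them directly, since the ``second'' bichord (in (1) the surviving bichord, in (2) the bichord separate from the alternating pair) remains a bichord after the first merging; configuration (3) reduces to (1) upon smoothing the alternating monochord first, which by alternacy turns the two parallel bichords into non-parallel bichords; configuration (4) is handled by the geometric separation of the two alternating pairs, which lets one process them independently; and configuration (5) is unwound along the alternacy chain $a,b,c,d$---smoothing $a$ triggers a merging at $b$, and the precise pattern of a mixed alternating pair ensures that $(c,d)$ survives as an alternating pair on the merged circle, yielding a second merging at $d$.

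The forward direction of the second statement is the main obstacle, handled by case analysis on whether $D(\uno)_u$ contains a bichord. If it does, the absence of (1), (2) and (3) forces all bichords to be parallel on a common pair of circles with no alternating pair or alternating triple; smoothing one bichord merges these circles and turns every remaining chord into a non-alternating monochord on a single circle, giving $\Phi(u) = 1$. If $D(\uno)_u$ has no bichord, the absence of (4) and (5) constrains the alternacy graph of the monochords: any two alternating pairs must either share a chord or interleave tightly on a common circle, and no alternacy path of length three exists. A classification of the resulting configurations, combined with a careful tracking of how cyclic orders of endpoints evolve through splittings and mergings, shows that each such diagram admits a chain yielding only one merging. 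Establishing this final reduction---certifying that the precise exclusions of (4) and (5) prevent a second bichord from ever arising during the smoothing process---is the most delicate portion of the proof.
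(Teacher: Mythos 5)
Your treatment of the easy implication (configurations force mergings) matches what the paper leaves as ``straightforward checking,'' and your proof of the first equivalence is essentially the contrapositive of the paper's argument and is fine. But for the hard implication of the second statement ($\Phi(u)>1$ implies one of the five configurations) you take a genuinely different route from the paper and leave a real gap exactly where the difficulty lives. The paper does not classify the diagrams avoiding the five configurations; instead it fixes a chain from $\uno$ to $u$, locates the first two mergings, observes that the chord of a merging is a bichord one step earlier, and then \emph{propagates configurations backwards up the chain} using two transformation lemmas (Lemmas \ref{lemma:monochordbis} and \ref{lemma:bichordbis}) that record precisely how a bichord, alternating pair, triple, etc.\ in $D(v)_u$ pulls back to $D(w)_u$ under a single splitting or merging. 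This handles all cases uniformly and never requires describing what a ``configuration-free'' diagram looks like. You instead argue the contrapositive by forward construction of a chain with at most one merging, which forces you to classify all diagrams avoiding (1)--(5).

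The gap is in your Case B (no bichord in $D(\uno)_u$). You assert that absence of (4) and (5) constrains the alternacy (Lando) graph so that ``a classification of the resulting configurations \dots shows that each such diagram admits a chain yielding only one merging,'' and you yourself flag this as the most delicate portion --- but you never carry it out. It is not routine: absence of a mixed alternating pair only excludes \emph{induced} paths $a\mathdash b\mathdash c\mathdash d$ in the Lando graph (the definition requires that $(a,b),(b,c),(c,d)$ be the \emph{only} alternations among the four chords), so your claim that ``no alternacy path of length three exists'' does not follow, and the interaction with configuration (4) (e.g.\ whether a $4$-cycle of alternations is excluded) is left unresolved. Moreover, even after one merging you must verify that no new bichord or alternating pair is created among the surviving chords --- e.g.\ that two parallel bichords do not become an alternating pair on the merged circle, which requires a planarity/disjointness argument for the chords that you do not supply in Case A either. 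These are exactly the verifications packaged into the paper's two lemmas, so either you must prove statements equivalent to them or complete the classification explicitly; as written the argument does not close.
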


The proof of the necessary condition (i.e., the implication $\Leftarrow$) is a case-by-case straightforward checking. We use Lemmas \ref{lemma:monochordbis} and \ref{lemma:bichordbis} in the proof of the sufficient condition.

\begin{lemma}\label{lemma:monochordbis} Let $w \succ v > u$ be three states with $\Phi(w)=\Phi(v)$. Then:
\begin{enumerate}
\item If $D(v)_u$ contains a bichord, then $D(w)_u$ contains a bichord or an alternating pair.
\item If $D(v)_u$ contains an alternating pair, then $D(w)_u$ contains an alternating pair.
\item If $D(v)_u$ contains an alternating triple, then $D(w)_u$ contains an alternating triple or a mixed alternating pair.
\item If $D(v)_u$ contains a mixed alternating pair, then $D(w)_u$ contains a mixed alternating pair.
\item If $D(v)_u$ contains an alternating pair and a bichord, then $D(w)_u$ contains an alternating pair and a bichord, or a mixed alternating pair or two disjoint alternating pairs.
\end{enumerate}
\end{lemma}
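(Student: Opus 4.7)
The plan is to translate the hypothesis $\Phi(w)=\Phi(v)$ into a ``lifting dictionary'' between chords of $D(v)_u$ and $D(w)_u$, and then run a case analysis for (1)--(5).

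Since $w\succ_i v$ and $\Phi(w)=\Phi(v)$, the one-step surgery must be a splitting: a unique circle $\tilde{z}$ of $D(w)$ is cut by the $1$-chord $e_i(w)$ into two circles $z_1, z_2$ of $D(v)$. Thus $D(w)_u$ is obtained from $D(v)_u$ by merging $z_1\sqcup z_2$ back into $\tilde{z}$ and inserting the monochord $e_i(w)$, whose endpoints $p,q$ separate $\tilde{z}$ into two arcs, one for each $z_j$. The lifting rules, each immediate from the cut-and-reconnect picture of the surgery, are: a chord disjoint from $\{z_1,z_2\}$ is unchanged; a monochord on $z_j$ stays a monochord on $\tilde{z}$ with both endpoints on the corresponding arc; a bichord with exactly one endpoint in $\{z_1,z_2\}$ stays a bichord; and a bichord between $z_1$ and $z_2$ becomes a monochord on $\tilde{z}$ with one endpoint on each arc, hence alternating with $e_i(w)$. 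Moreover the cyclic order on $z_j$ coincides with the linear order along its arc in $\tilde{z}$, so alternation patterns among chords supported on a single arc are preserved and $e_i(w)$ does not alternate with any such chord.

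With this dictionary in hand, parts (1), (2), and (4) will be immediate: a bichord either remains a bichord or merges with $e_i(w)$ into an alternating pair; an alternating pair or a mixed alternating pair supported on a $z_j$ lifts to $\tilde{z}$ with all endpoints on one arc, so both the alternation and the ``no other alternating pair'' condition are preserved, and no new alternating pair with $e_i(w)$ is introduced within the configuration. Part (5) is similar: in most sub-cases the alternating pair and the bichord are preserved by the dictionary, and the only new phenomenon is when $c$ is a bichord between $z_1$ and $z_2$, in which case $c$ becomes a monochord on $\tilde{z}$ alternating with $e_i(w)$ and the resulting pair $(c, e_i(w))$ is disjoint from the preserved $(a,b)$, yielding two disjoint alternating pairs.

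The main obstacle is (3), in the sub-case when both bichords $a,b$ and the monochord $c$ of the alternating triple lie on $\{z_1,z_2\}$, so that all four chords $a, b, c, e_i(w)$ become monochords on $\tilde{z}$. A priori their endpoints on $\tilde{z}$ could be arranged in a nested or an interleaved cyclic order; the interleaved one would yield four alternating pairs among $\{a, b, c, e_i(w)\}$ and fail to form a mixed alternating pair. I will rule this out with a planarity argument: parallel bichords between two disjoint circles of the planar Kauffman diagram $D(v)$ lie in the annular region bounded by those circles, and the non-crossing condition forces the cyclic orders of $\{a_1,b_1\}$ on $z_1$ and $\{a_2,b_2\}$ on $z_2$ to be opposite. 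Together with the alternation $a_1,c_1,b_1,c_2$ on $z_1$, this pins the cyclic order on $\tilde{z}$ to $p, a_1, c_1, b_1, c_2, q, b_2, a_2$, and a direct check produces exactly the three alternating pairs $(a, e_i(w))$, $(e_i(w), b)$, $(b, c)$, i.e.\ a mixed alternating pair realized by the chain $a - e_i(w) - b - c$.
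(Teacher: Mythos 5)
Your proposal is correct and follows essentially the same route as the paper: reduce to the single splitting $w\succ_i v$, track how each chord of $D(v)_u$ lifts to $D(w)_u$ (with a $z_1$--$z_2$ bichord becoming a monochord alternating with $e_i(w)$), and run the case analysis; the paper simply lists the resulting facts as "straightforward checking", whereas you supply the planarity argument for case (3) that forces the non-linked (nested) pattern of $a,b$ on $\tilde z$ and hence the chain $a$--$e_i(w)$--$b$--$c$. The only quibble is that the cyclic order on $\tilde z$ is pinned down only up to the position of the surgery site (which may swap the roles of $a$ and $b$ and rotate the order), but every planarity-admissible order yields the same mixed alternating pair, so the argument stands.
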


\begin{proof}
Since $\Phi(w)=\Phi(v)$, $w \succ_i v$ is a splitting for some monochord $e_i(w)$. The lemma follows from Remark \ref{miniremark} together with the following facts ($i \neq j \neq k \neq l$):
\begin{itemize}
\item[- ]\label{it:monochord:bichord} If $e_j(v)$ is a bichord, then either $e_j(w)$ is a bichord or $(e_j(w),e_i(w))$ form an alternating pair.
\item[- ]\label{it:monochord:alternatingpair} If $(e_j(v), e_k(v))$ is an alternating pair, then $(e_j(w), e_k(w))$ is an alternating pair.
\item[- ]\label{it:monochord:alttriple} If $(e_j(v),e_k(v),e_l(v))$ is an alternating triple, then either $(e_j(w),e_k(w),e_l(w))$ is an alternating triple, or $(e_i(w),e_j(w),e_k(w),e_l(w))$ is a mixed alternating pair.
\item[- ] If $(e_j(v),e_k(v),e_l(v),e_m(v))$ is a mixed alternating pair, then $(e_j(w),e_k(w),$ $e_l(w),e_m(w))$ is a mixed alternating pair. 
\item[- ]\label{it:monochord:altpairbichord} If $(e_j(v),e_k(v))$ is an alternating pair and $e_l(v)$ is a bichord, then either $(e_j(w),e_k(w))$ is an alternating pair and $e_l(w)$ is a bichord or $(e_i(w),e_j(w),$ $e_k(w),e_l(w))$ is a mixed alternating pair, or $(e_i(w),e_l(w))$ and $(e_j(w),e_k(w))$ are disjoint alternating pairs. \qedhere \end{itemize}\end{proof}

\begin{lemma}
\label{lemma:bichordbis} Let $w \succ v > u$ be three states with $\Phi(w)=\Phi(v)-1$. Then:
\begin{enumerate}
\item If $D(v)_u$ contains a bichord, then $D(w)_u$ contains two non-parallel bichords.
\item If $D(v)_u$ contains an alternating pair, then $D(w)_u$ contains an alternating triple or an alternating pair and a bichord.
\end{enumerate}
\end{lemma}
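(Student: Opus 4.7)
The plan is to exploit that the hypothesis $\Phi(w)=\Phi(v)-1$ forces the surgery $w\succ_i v$ to be a merging, so $e_i(w)$ is a bichord in $D(w)$ joining two circles $z_1, z_2$ that fuse into a single circle $z$ in $D(v)$; the chord $e_i(v)$ is then a monochord on $z$ whose endpoints $p_1', p_2'$ split $z$ into two arcs $A_1, A_2$ inherited from $z_1, z_2$ respectively. Since $w$ and $v$ differ only at coordinate $i$, Remark \ref{miniremark} ensures that each chord $e_l(v)\in D(v)_u$ corresponds to a chord $e_l(w)\in D(w)_u$ with the same planar segment (possibly on a different circle), and $e_i(w)$ is the only extra chord in $D(w)_u$.

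For part (1), let $e_j(v) \in D(v)_u$ be a bichord with endpoints on distinct circles $y_1, y_2$. Since $e_j(v)$ is a bichord, it is not possible that $y_1 = y_2 = z$. In all remaining cases, any endpoint of $e_j(v)$ on $z$ drops into either $z_1$ or $z_2$ in $D(w)$, while the other endpoint is on a circle $y_{\neq z}$; hence $e_j(w)$ joins a pair of circles distinct from $\{z_1, z_2\}$, so $e_j(w)$ and $e_i(w)$ are non-parallel bichords in $D(w)_u$.

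For part (2), let $(e_j(v), e_k(v))$ be an alternating pair on a circle $y$ of $D(v)$. If $y \neq z$, the pair persists intact in $D(w)$, and combined with $e_i(w)$ gives an alternating pair and a bichord. Assume $y=z$, and let $n_1 \in \{0,1,2,3,4\}$ count the endpoints of $(e_j(v), e_k(v))$ on $A_1$. When $n_1 \in \{0, 4\}$, all four endpoints lie on one $A_\ell$ and $(e_j(w), e_k(w))$ inherits its alternating structure as a pair of monochords on $z_\ell$; together with $e_i(w)$ we recover an alternating pair and a bichord. When $n_1 \in \{1, 3\}$, one chord becomes a bichord parallel to $e_i(w)$ and the other is a monochord on the arc with three endpoints; a direct inspection of the cyclic order on that arc shows that the monochord alternates with the two parallel bichords, yielding an alternating triple.

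The main obstacle is excluding the case $n_1 = 2$, where the alternation of $(e_j(v), e_k(v))$ combined with the placement of the cuts $p_1', p_2'$ would force each of $e_j(w), e_k(w)$ to become a bichord parallel to $e_i(w)$, producing three parallel bichords --- a configuration not among the required conclusions. I expect the decisive point to be the planarity of the state diagram $D(v)$: tracing $z$ through the planar $0$-smoothing that fused $z_1, z_2$, the cyclic orders on $A_1, A_2$ are dictated by the planar embedding in such a way that any two chords with endpoints split $1$-$1$ between the two arcs appear on $z$ in a non-alternating cyclic pattern $(x, y, y, x)$ rather than alternating $(x, y, x, y)$. Thus $n_1 = 2$ cannot arise under the hypothesis of part (2), and the case analysis closes.
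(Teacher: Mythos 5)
Your proof is correct and follows essentially the same route as the paper, which simply asserts the two case-analysis facts (a bichord of $D(v)_u$ yields two non-parallel bichords; an alternating pair yields either an alternating pair plus a bichord or an alternating triple) that you verify in detail by tracking how endpoints distribute over the arcs $A_1,A_2$. Your planarity argument excluding the $n_1=2$ case is sound --- two disjoint arcs joining $z_1$ to $z_2$ alongside $e_i(w)$ necessarily read $(x,y,y,x)$ around the merged circle, never $(x,y,x,y)$ --- and this is exactly the subtle point the paper leaves to the reader.
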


\begin{proof}
Since $\Phi(w)=\Phi(v)-1$, $w \succ_i v$ is a merging for some bichord $e_i(w)$. The lemma follows from Remark \ref{miniremark} together with the following facts ($i \neq j \neq k \neq l$):
\begin{itemize}
\item[- ]\label{it:bichord:bichord} If $e_j(v)$ is a \bichord, then $e_j(w)$ and $e_i(w)$ are non-parallel bichords.
\item[- ]\label{it:bichord:alternatingpair} If $(e_j(v),e_k(v))$ is an alternating pair, then either $(e_j(w),e_k(w))$ is an alternating pair and $e_i(w)$ is a bichord, or $(e_i(w),e_j(w),e_k(w))$ is an alternating triple. \qedhere \end{itemize} \end{proof}

\begin{proof}[Proof of the sufficient condition of Proposition \ref{prop:char}] Fix a chain $$\uno=u_0\succ_{i_1}u_1\succ_{i_2}\ldots\succ_{i_{k}} u_k= u.$$ 

Consider first the case $\Phi(u)>0$ and write $m$ for be the minimal number such that $u_{m-1}\succ_{i_m}u_m$ is a merging. Therefore, $e_{i_{m}}(u_{m-1}) \in D(u_{m-1})_u$ is a bichord and applying Lemma \ref{lemma:monochordbis} recursively, we deduce that $D(\uno)_u$ contains a bichord or an alternating pair. 

Consider now the case $\Phi(u)>1$ and write $m<p$ for the first two indices such that $u_{m-1}\succ_{i_m}u_m$ and $u_{p-1}\succ_{i_p}u_p$ are mergings. Therefore, $\Phi(u_m) = \Phi(u_{p-1})$, and applying recursively Lemma \ref{lemma:monochordbis} to the bichord $e_{i_{p}}(u_{p-1}) \in D(u_{p-1})_u$ we deduce that $D(u_m)_u$ contains a bichord or an alternating pair. Apply once Lemma \ref{lemma:bichordbis} and deduce that $D(u_{m-1})_u$ contains at least one of the following: two non-parallel bichords or an alternating triple or an alternating pair and a bichord. 

Finally, applying in each case recursively Lemma \ref{lemma:monochordbis} we deduce that $D(\uno)_u$ contains at least one of the following configurations:
\begin{enumerate}
\item Two non-parallel bichords.
\item An alternating pair and a bichord.
\item An alternating triple.
\item Two disjoint alternating pairs.
\item A mixed alternating pair.\qedhere
\end{enumerate} \end{proof}

\subsection{Ladybug set}

Let $u \in \cube{n}$ so that $\Phi(u)=1$. If $\flat{u} = \flat{\uno}-1$, recall from Lemma \ref{alternatingcycles}$(2a)$ that $G(u)$ contains a unique cycle. We define the \textit{ladybug set of $u$}, $\cL(u)$, as the singleton consisting of the connected component of $G(u)$ containing that cycle.

Consider now the case when $\flat{u} = \flat{\uno}$. Then, by Lemma~\ref{alternatingcycles}$(2b)$, $G(u)$ contains two cycles $\mathfrak{c}_1$ and $\mathfrak{c}_2$ sharing a common vertex $z$. Write $a_i,b_i$ for the edges of $\mathfrak{c}_i$ adjacent to $z$, for $i = 1,2$ (if $\mathfrak{c}_i$ is a loop, then $a_i=b_i$).

The chords $a_1,b_1,a_2,b_2$ divide the circle $z$ into four arcs (see Figure \ref{figladybug}). We define the \textit{ladybug set} $\cL(u)$ of $D(u)$ as the set whose elements are the two arcs of $z$ that are reached by traveling along any of the chords $a_1,b_1,a_2,b_2$ until $z$, and then taking a right turn.
\begin{figure}[h]
\centering
\includegraphics[width = 12.95cm]{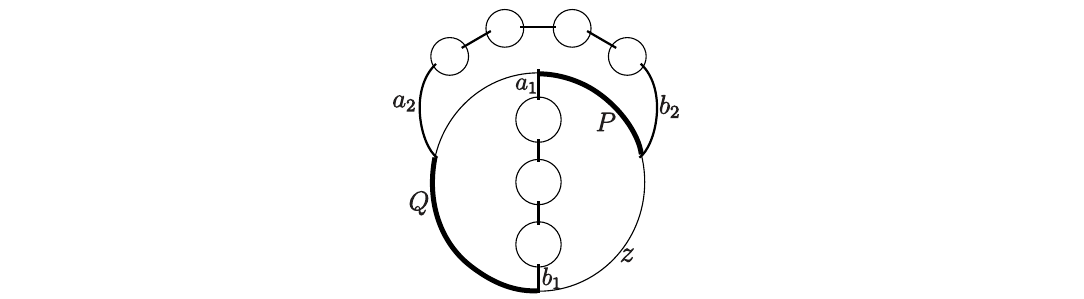}
\caption{\small{An example of $D(u)$ for the case when $\Phi(u)=1$ and $\flat{u} = \flat{\uno}$. The cycles $\mathfrak{c}_1$ and $\mathfrak{c}_2$ (of length 4 and 5, respectively) divide $z$ into four arcs. The ladybug set corresponds to the thickened arcs, i.e. $\cL(u) = \{P, Q\}$.}}
\label{figladybug}
\end{figure}

\begin{remark}
In the case when $\flat{u} = \flat{\uno}$ and $a_i=b_i$ for $i=1,2$, the set $\cL(u)$ coincides with the \textit{right pair} giving rise to the celebrated \textit{ladybug matching} defined in \cite{LSoriginal}.
\end{remark}

\begin{lemma}\label{lemma:ell} If $u>v$, $\flat{v}=\flat{\uno}$ and $\Phi(u)=\Phi(v)=1$, then each arc in $\cL(u)$ intersects one of the two arcs in $\cL(v)$ and viceversa.
\end{lemma}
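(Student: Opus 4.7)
The plan is to reduce to a single splitting and then perform a local analysis at the ladybug vertex. First, any chain from $u$ to $v$ contains exactly $\Phi(v)-\Phi(u)=0$ mergings, and hence consists entirely of splittings. Since splittings preserve $|\pi_0 G|$, every intermediate state $u_j$ along such a chain satisfies $\flat{u_j}=\flat{v}=\flat{\uno}$ and $\Phi(u_j)=1$, so it lies in case (2b) of Lemma~\ref{alternatingcycles} and has a well-defined pair of ladybug arcs. It therefore suffices to prove the statement for a single splitting $u\succ_i v$ and then compose the resulting bijections between ladybug sets along the chain.

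For a single splitting, $e_i(u)$ is a monochord of $D(u)$. Write $z_u$ for the ladybug vertex of $u$ and $a_1,b_1,a_2,b_2$ for the four cycle chords alternating around it. If $e_i(u)$ is based on a circle $z^*\neq z_u$, then $z_u$ and all the chords incident to it are unchanged in $D(v)$, so $z_v=z_u$ and, by the locality of the right-turn rule, $\cL(u)=\cL(v)$ as pairs of subsets of the plane. If instead $e_i(u)$ is based on $z_u$, then the splitting cuts $z_u$ into two circles $z_1,z_2$. Applying Lemma~\ref{alternatingcycles}(2b) to $v$ shows that the two cycles of $G(v)$ still share a single vertex, which forces all four cycle chord endpoints to land on a single new circle (taken as $z_v$), and in turn forces both endpoints of $e_i(u)$ to lie in a single arc of $z_u$ between two consecutive cycle chord endpoints.

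Under that constraint, three of the four arcs of $z_v$ between consecutive cycle chord endpoints coincide with the corresponding arcs of $z_u$, while the fourth is obtained from the affected arc of $z_u$ by excising a small segment around the monochord endpoints and reattaching a smoothing arc. The right-turn rule, being a purely local prescription at each chord endpoint, induces a canonical bijection between $\cL(u)$ and $\cL(v)$: each ladybug arc of $\cL(u)$ that is unaffected is literally the same subset of the plane in $\cL(v)$, and if the affected arc happens to be a ladybug arc, its two outer portions (outside the excised segment) belong to both the corresponding arc of $\cL(u)$ and the corresponding arc of $\cL(v)$, giving a nonempty intersection. Composing these single-step bijections along the chain yields the lemma. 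The main obstacle is the ruling-out step for the on-$z_u$ case — showing that the monochord endpoints cannot separate the cycle chord endpoints — which relies crucially on the fact that $G(v)$ still belongs to case (2b) rather than acquiring a theta-like structure with two distinct shared vertices.
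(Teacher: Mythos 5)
Your reduction to a single splitting is legitimate as far as it goes (all steps in a chain from $u$ to $v$ are splittings since $\Phi$ is constant, and every intermediate state stays in case (2b) of Lemma~\ref{alternatingcycles}), and your local analysis of one splitting — including the ruling-out of a monochord whose endpoints separate the four cycle-chord feet — is sound. The genuine gap is the final sentence: ``composing these single-step bijections along the chain yields the lemma.'' The conclusion of the lemma is a statement about intersections of subsets of the plane, and such statements do not compose: from $P_{u}\cap P_{u_1}\neq\emptyset$ and $P_{u_1}\cap P_{u_2}\neq\emptyset$ you cannot conclude $P_{u}\cap P_{u_2}\neq\emptyset$, and, more seriously, from $P_{u_j}\cap Q_{u_{j+1}}=\emptyset$ at each step you cannot conclude $P_u\cap Q_v=\emptyset$ for the composite. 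That disjointness is the real content needed downstream (it is what makes the labelling in the definition of $\rho_{u,v}$ unambiguous, and Lemma~\ref{lematech1} invokes the present lemma for $|u|-|w|=2$, not just for covering relations). Concretely, the new arc created at a later step runs alongside a chord of $D(u_j)$; it is disjoint from the circles of $D(u_j)$, but nothing in your argument prevents it from meeting the \emph{original} arc $P_u$, portions of which may already have been excised at earlier steps.

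This is exactly the point where the hypothesis $\Phi(u)=\Phi(v)$ must be used globally rather than one chord at a time. The paper performs all the surgeries simultaneously: setting $\Lambda=\{e_i(u): u_i\neq v_i\}$, the equality $\Phi(u)=\Phi(v)$ forces every element of $\Lambda$ to be a loop and forbids any alternating pair inside $\Lambda$; this yields two disjoint open sets $A\supset P\cup p\cup d_P$ and $B\supset Q\cup q\cup d_Q$ containing the arcs together with the entire supports of the surgeries affecting them, and both the nonemptiness $P\cap P'\neq\emptyset$, $Q\cap Q'\neq\emptyset$ and the disjointness $P\cap Q'=Q\cap P'=\emptyset$ fall out at once. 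To salvage your inductive scheme you would need to carry such an invariant (e.g.\ $P_{u_j}\subset A$ and $Q_{u_j}\subset B$ for fixed disjoint $A,B$ determined at the outset by $\Lambda$) through the chain, which amounts to reconstructing the paper's argument; as written, the composition step is unjustified.
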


\begin{proof}
Let $\Lambda =\{e_i(u) \, | \, u_i \neq v_i\} \subset \bar{E}(u)$. Since $\Phi(u) = \Phi(v)$, all elements in $\Lambda$ are loops and none pair among them constitute an alternating pair. 

By Lemma~\ref{alternatingcycles}$(2b)$, $G(u)$ contains two cycles $\mathfrak{c}_1$ and $\mathfrak{c}_2$ sharing a common vertex $z_u$. Write $z_v$ for the circle in $D(v)$ with the same property. Then $z_v$ is obtained from $z_u$ by performing surgery along the loops in $\Lambda$ with their endpoints in $z_u$. 

Write $\cL(u) = \{P,Q\}$ and let $\Lambda_P$ (resp. $\Lambda_Q$) be the subset of loops of $\Lambda$ so that at least one of their endpoints lies in $P$ (resp. $Q$). The disposition of $\mathfrak{c}_1$ and $\mathfrak{c}_2$ (stated in Lemma~\ref{alternatingcycles}$(2b)$) implies that $\Lambda_P \cap \Lambda_Q = \emptyset$. Define the following subsets of $\mathbb{R}^2$:
$$
p =\bigcup_{e_i(u)\in\Lambda_P} e_i(u) \quad \mbox{ and } \quad q =\bigcup_{e_i(u)\in\Lambda_Q} e_i(u).
$$

The endpoints of each loop $e_i(u) \in \Lambda_P$ (resp. $e_i(u) \in \Lambda_Q$) separate $z_u$ into two arcs: write $d_i$ for the unique arc in $z_u$ which is disjoint from $Q$ (resp. from $P$). Define the following subsets of $\mathbb{R}^2$: $$ d_P =\bigcup_{e_i(u)\in\Lambda_P} d_i \quad \mbox{ and } \quad d_Q =\bigcup_{e_i(u)\in\Lambda_Q} d_i.$$

As there are no alternating pairs in $\Lambda$, we deduce that
\begin{align*}
d_P\cap d_Q &=\emptyset & &\mbox{ and }& (p \cup d_P)\cap(q \cup d_Q)&=\emptyset  
\end{align*}
As a consequence, we can consider two disjoint open subsets $A,B$ of $\mathbb{R}^2$ such that
\begin{align*}
P \cup p\cup d_P&\subset A & Q\cup q\cup d_Q\subset B.
\end{align*}
Each of the arcs of $\cL(v)$ is obtained by doing surgery on each of the arcs of $\cL(u)$. The surgery performed on $P$ is supported in $A$, while the surgery perfomed on $Q$ is supported in $B$. Since $A \cap B = \emptyset$, it is possible to define $P'$ (resp. $Q'$) as the arc of $\cL(v)$ obtained from $P$ (resp. $Q$). Thus, $P$ intersects $P'$, $Q$ intersects $Q'$ and $P\cap Q' = Q \cap P' = \emptyset$, as desired.
\end{proof}

\subsection{The bijection $\rho_{u,v}$}

Given $u \succ_i v$ so that $\Phi(u) = \Phi(v) = 1$, then $\flat{u}=\flat{v}$, and therefore it is possible to define a bijection $\rho_{u,v}: \cL(u) \to \cL(v)$ in the following way:

\begin{enumerate}
\item[(1)] If $\flat{v} = \flat{\uno} -1$, then $\rho_{u,v}$ maps the unique element in $\cL(u)$ to the unique element in $\cL(v)$.
\item[(2)] If $\flat{v} = \flat{\uno}$, then by Lemma \ref{lemma:ell} each of the two arcs in $\cL(u)$ intersects one of the two arcs in $\cL(v)$. Write $\cL(u) = \{P_u, Q_u\}$ and $\cL(v)= \{P_v, Q_v\}$, labelling the arcs so that $P_u \cap P_v \neq \emptyset$ and $Q_u \cap Q_v \neq \emptyset$ as subsets of $\mathbb{R}^2$ (see Figure \ref{ladycases}). The bijection $\rho_{u,v}: \cL(u) \to \cL(v)$ is given by sending $P_u$ to $P_v$ and $Q_u$ to $Q_v$. In particular, in those cases when none of the endpoints of the chord $e_i(u)$ lie in $P_u$ nor $Q_u$, $\rho_{u,v}$ becomes the identity.
\end{enumerate}

\begin{figure}[t]
\centering
\includegraphics[width = 12.95cm]{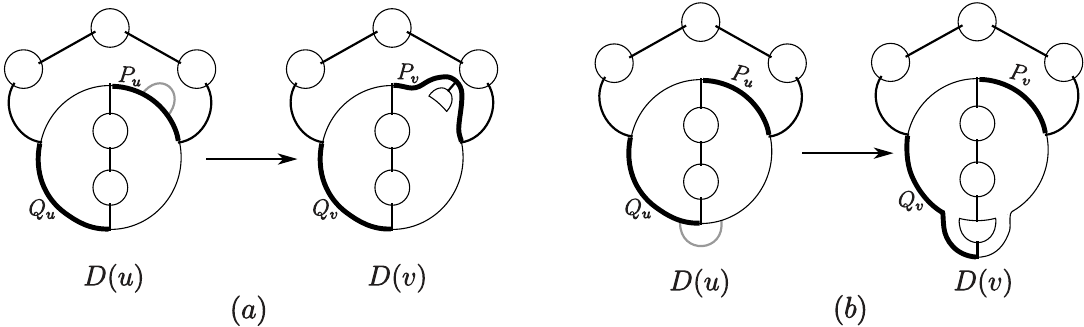}
\caption{\small{Two examples illustrating the bijection $\rho_{u,v}$ for the case $u \succ_i v$, $\Phi(u) = \Phi(v) = 1$ and $\flat{v} = \flat{\uno}$, with the edge $e_i(u)$ depicted in light colour. In both cases $\rho_{u,v}: \cL(u) \to \cL(v)$ maps $P_u$ to $P_v$ and $Q_u$ to $Q_v$. }}
\label{ladycases}
\end{figure}

The following lemma follows immediately from Lemma \ref{lemma:ell}:

\begin{lemma}\label{lematech1} Let $u \succ v, v' \succ w$ so that $\Phi(u)=\Phi(w)=1$. Then the following diagram commutes
\[\xymatrix{
\cL(u)\ar[r]^{\rho_{u,v}}\ar[d]^{\rho_{u,v'}} & \cL(v) \ar[d]^{\rho_{v,w}} \\
\cL(v')\ar[r]^{\rho_{v',w}} &\cL(w).
}\]
\end{lemma}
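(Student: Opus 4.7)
The plan is to reduce the commutativity to an intersection-based matching of arcs in $\mathbb{R}^2$ and then iterate the geometric analysis underlying the proof of Lemma~\ref{lemma:ell}.

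First I would observe that $\Phi$ is weakly monotone along the cube: it increases by one at each merging and is preserved by each splitting. Combined with $\Phi(u)=\Phi(w)=1$ and the chains $u>v, v'>w$, this forces $\Phi(v)=\Phi(v')=1$, so every covering relation in the $2$-face is a splitting. The formulas for $|\pi_0 G|$ then show that $\flat{\cdot}$ is constant on the square, and by Lemma~\ref{alternatingcycles}(2) this common value is either $\flat{\uno}-1$ or $\flat{\uno}$. In the former case every ladybug set is a singleton and the diagram tautologically commutes, so I may assume $\flat{\cdot}=\flat{\uno}$ from here on.

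In this remaining case each ladybug set has exactly two elements, and by construction together with Lemma~\ref{lemma:ell} each bijection $\rho_{a,b}$ in the square sends $P\in\cL(a)$ to the unique arc of $\cL(b)$ that meets $P$ as a subset of $\mathbb{R}^2$. Applying Lemma~\ref{lemma:ell} to the strict (but not covering) relation $u>w$ produces a canonical bijection $\sigma\colon\cL(u)\to\cL(w)$ defined by the same intersection rule, and the claim reduces to the equalities $\rho_{v,w}\circ\rho_{u,v}=\sigma=\rho_{v',w}\circ\rho_{u,v'}$.

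To conclude, I would iterate the surgery-support analysis in the proof of Lemma~\ref{lemma:ell}. With the notation of that proof, the arcs of $\cL(v)$ arise from those of $\cL(u)$ by surgery supported in disjoint open regions $A\supset P_u$ and $B\supset Q_u$ of $\mathbb{R}^2$, and likewise the arcs of $\cL(w)$ arise from those of $\cL(v)$ by surgery supported in disjoint open regions $A'\supset P_v$ and $B'\supset Q_v$. Since $P_v\subset\overline{A}$ and $Q_v\subset\overline{B}$, after slight enlargement one may arrange $A'\subset A$ and $B'\subset B$; hence $P_w\subset\overline{A}$ is disjoint from $Q_u$ and must therefore meet $P_u$, forcing $\sigma(P_u)=P_w=\rho_{v,w}(\rho_{u,v}(P_u))$. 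The same argument run through $v'$ yields the second equality, and the argument on $Q_u$ is symmetric. The principal obstacle is precisely this geometric bookkeeping of nested surgery supports along a two-step chain; once it is carried out cleanly, the commutativity follows purely from Lemma~\ref{lemma:ell}.
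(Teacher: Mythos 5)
Your preliminary reductions are correct and worth having on record: since $\Phi$ can only increase as one descends the cube (by one at each merging, unchanged at each splitting), the hypothesis $\Phi(u)=\Phi(w)=1$ forces $\Phi(v)=\Phi(v')=1$, so all four covering relations in the square are splittings, $\flat{\cdot}$ is constant on the square, and the case $\flat{w}=\flat{\uno}-1$ is trivial because all four ladybug sets are singletons. Your overall strategy in the remaining case --- apply Lemma \ref{lemma:ell} to the non-covering relation $u>w$ to get a canonical bijection $\sigma\colon\cL(u)\to\cL(w)$ defined by the same intersection rule, and show that both composites equal $\sigma$ --- is exactly what the paper's one-line proof (``follows immediately from Lemma \ref{lemma:ell}'') leaves implicit, and you are right that the intersection relation is not formally transitive, so some geometric input from the \emph{proof} of Lemma \ref{lemma:ell} is genuinely needed.

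The step I cannot accept as written is ``after slight enlargement one may arrange $A'\subset A$ and $B'\subset B$''. The set $A'$ must contain not just $P_v$ but the chords of $\Lambda'_{P_v}$ and the arcs $d_i'\subset z_v$ supporting the second surgery, and these need not lie in $A$: each $d_i'$ is only required to avoid $Q_v$, so it may run through the two non-ladybug arcs of $z_v$, which contain points of the two non-ladybug arcs of $z_u$ lying outside $A\cup B$. So the nesting of supports fails in general. What you actually need is only the weaker disjointness $P_w\cap Q_u=\emptyset$, and that does hold, by a direct argument: $P_v\cap Q_u=\emptyset$ by the first application of the lemma; the surgery $u>v$ only deletes points of $Q_u$, so $Q_u\cap z_v\subset Q_v$, whence each $d_i'$ (being contained in $z_v$ and disjoint from $Q_v$) misses $Q_u$, while the deleted part $Q_u\smallsetminus z_v$ sits in $B$ away from the rerouting near $P_v$; and the chords of $\Lambda'_{P_v}$ have no endpoint in $Q_v$ because $\Lambda'_{P_v}\cap\Lambda'_{Q_v}=\emptyset$. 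With $P_w\cap Q_u=\emptyset$ established, Lemma \ref{lemma:ell} applied to $u>w$ forces $P_w\cap P_u\neq\emptyset$, i.e.\ $\sigma(P_u)=P_w$, and the rest of your argument (the route through $v'$ and the symmetric claim for $Q_u$) goes through unchanged.
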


%
%

\subsection{The bijection $\mu_i(u)$}\label{Sec_hbarra}
Given $u\succ_{i}v$ with $\Phi(u)=0$, $\Phi(v)=1$ (i.e., a merging), we define a function 
\[\mu_i(u)\colon \mathcal{O}_i(u) \to \cL(v)\]
as follows:

\begin{enumerate}
\item[(1)]  If $\flat{v} = \flat{\uno}-1$, then $|\cL(v)|=1$ and we define $\mu_i(u)$ as the unique constant function.
\item[(2)] If $\flat{v} = \flat{\uno}$, write $\mathcal{O}_i(u)=\{z_1,z_2\}$, $\cL(v) = \{P,Q\}$, and assume without loss of generality that $P \cap z_1 \neq \emptyset$ and $Q \cap z_2\neq \emptyset$ as subsets of $\mathbb{R}^2$. Then, we define the bijection $\mu_i(u):\mathcal{O}_i(u) \to \cL(v)$ by declaring $\mu_i(u) (z_1) = P$, $\mu_i(u) (z_2) = Q$ (see Figure \ref{ladycases2}).
\end{enumerate}

\begin{figure}[t]
\centering
\includegraphics[width = 11.9cm]{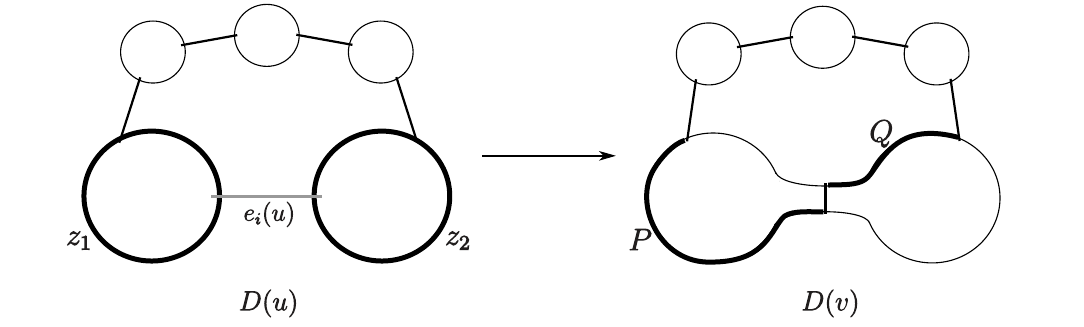}
\caption{\small{An example illustrating the bijection $\mu_i(u)$ for the case $u \succ_i v$, $\Phi(u) = 0$, $\Phi(v) = 1$ and $\flat{v} = \flat{\uno}$. The edge $e_i(u)$ is depicted in light colour. The bijection $\mu_i(u) \colon \mathcal{O}_i(u) \to \cL(v)$ sends $z_1$ and $z_2$ to $P$ and $Q$, respectively.}}
\label{ladycases2}
\end{figure}

The next result is a consequence of Lemma \ref{lemma:ell}.

\begin{lemma}\label{lemacuadrados} Let $u\succ_i v \succ_j w$, $u \succ_j v' \succ_i w$. Then, the following squares commute:
\begin{enumerate}
\item If $\Phi(u)=\Phi(v')=0$ and $\Phi(v)= \Phi(w)=1$, then\footnote{We defer the definition of $h_{u,v}\colon Z(u)\lra Z(v)$ some lines until the beginning of Section \ref{SecFunc}.}
\[\xymatrix{
\cO_i(u)\ar[r]^{\mu_i(u)}\ar[d]^{h_{u,v'}} & \cL(v) \ar[d]^{\rho_{v,w}} \\
\cO_i(v')\ar[r]^{\mu_i(v')} & \cL(w).
}\]
\item If $\Phi(u)=0$ and $\Phi(v)=\Phi(v')=\Phi(w)=1$, then
$$
\xymatrix{
\cO_j(u)=\cO_i(u)\ar[r]^-{\mu_i(u)}\ar[d]^{\mu_j(u)} & \cL(v)\ar[d]^{\rho_{v,w}} \\
\cL(v')\ar[r]^{\rho_{v',w}} & \cL(w).
}
$$
\end{enumerate}
\end{lemma}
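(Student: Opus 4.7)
The plan is to exploit a uniform geometric description of all bijections in sight: $\mu_i(u)$ sends a circle $z$ to the arc of the target ladybug set that meets $z$ in $\mathbb{R}^2$ (by definition); $\rho_{u,v}$ sends an arc $X$ to the arc of $\cL(v)$ that meets $X$ in $\mathbb{R}^2$ (by Lemma~\ref{lemma:ell}); and $h_{u,v'}$, the tracking of circles under a splitting, is determined on $\cO_i(u)$ by following the endpoint of $e_i(u)$. All three collapse to one principle: send $X$ to the unique element $Y$ with $X \cap Y \neq \emptyset$ in $\mathbb{R}^2$. With this in hand, both squares should reduce to checking that a single planar point propagates consistently through both legs of the diagram, which is plausible because the surgeries at $i$ and $j$ commute and are supported in disjoint neighborhoods of the corresponding chords.

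The degenerate case $\flat{w} = \flat{\uno} - 1$ is immediate: then $\cL(w)$ is a singleton and both diagrams commute trivially. So I would assume $\flat{w} = \flat{\uno}$. For square (1), fix $z \in \cO_i(u)$ and let $x$ be the endpoint of $e_i(u)$ lying on $z$. Since $u \succ_j v'$ is a splitting and $e_j(u)$ is disjoint from $x$, the point $x$ persists on the circle $h_{u,v'}(z) \in \cO_i(v')$. The arc $\mu_i(u)(z)$ lies in any sufficiently small neighborhood of $x$ inside $D(v)$, so $\rho_{v,w}(\mu_i(u)(z))$ is the unique arc of $\cL(w)$ in a neighborhood of $x$ inside $D(w)$; this equals $\mu_i(v')(h_{u,v'}(z))$ by the geometric description of $\mu$.

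For square (2), fix $z \in \cO_i(u) = \cO_j(u)$ and let $x_i, x_j$ be the endpoints of $e_i(u)$ and $e_j(u)$ on $z$. The arcs $\mu_i(u)(z) \in \cL(v)$ and $\mu_j(u)(z) \in \cL(v')$ record the ``right-turn'' region of $z$ at $x_i$ and $x_j$ respectively. After the subsequent splittings $v \succ_j w$ and $v' \succ_i w$, propagating via $\rho_{v,w}$ and $\rho_{v',w}$ should preserve the right-turn convention, so both $\rho_{v,w}(\mu_i(u)(z))$ and $\rho_{v',w}(\mu_j(u)(z))$ would select the same arc of $\cL(w)$ adjacent to the common vertex of the two cycles of $G(w)$ coming from $z$. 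The hard part will be verifying that the ``right turn'' convention is in fact preserved through a composite of a merging followed by a splitting; this requires a careful local planar analysis near $x_i, x_j$, exploiting the rigidity imposed by $\Phi(w) = 1$ together with the constraints of Proposition~\ref{prop:char} applied to $u$ (in particular, the absence of configurations that would force an alternating ladybug to reshuffle between the two routes).
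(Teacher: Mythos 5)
Your overall strategy is the same as the paper's: the paper gives no argument for this lemma beyond asserting that it ``is a consequence of Lemma~\ref{lemma:ell}'', and your reduction of $\mu_i(u)$, $\rho_{v,w}$ and (the restriction of) $h_{u,v'}$ to the single principle ``send $X$ to the unique element of the target that meets $X$ in $\mathbb{R}^2$'' is exactly the intended mechanism. The way to make this rigorous is the one used in the proof of Lemma~\ref{lemma:ell} itself: exhibit one point lying simultaneously in all four sets around the square, which is possible because the surgeries along $e_i$ and $e_j$ are supported in disjoint planar regions.

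As written, though, the proposal has a genuine gap, and you flag it yourself. For square (2) the key step is never verified: you assert that propagating via $\rho_{v,w}$ and $\rho_{v',w}$ ``should preserve the right-turn convention'' and then state that ``the hard part will be verifying'' this. Since nonempty intersection is not transitive, commutativity does not follow formally from the four conditions $P_v\cap z\neq\emptyset$, $P_w\cap P_v\neq\emptyset$, $P_{v'}\cap z\neq\emptyset$, $P'_w\cap P_{v'}\neq\emptyset$; one must actually produce a common point (or a disjoint-support argument as in Lemma~\ref{lemma:ell}) forcing $P_w=P'_w$, and in the parallel-bichord case that is precisely where the content of the lemma lies. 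A second, smaller issue is in square (1): the claim that ``$\mu_i(u)(z)$ lies in any sufficiently small neighborhood of $x$'' is false as stated. That arc is one of the four arcs into which the chords $a_1,b_1,a_2,b_2$ divide the merged circle; it merely has an endpoint near $x$ (it is bounded by an endpoint of the loop $e_i(v)$) and can extend far from $x$. What the argument needs is that $\mu_i(u)(z)\cap z$ contains a point away from the support of the surgery along $e_j$, so that this point survives both into $h_{u,v'}(z)$ and into $\rho_{v,w}(\mu_i(u)(z))$. With those two repairs the argument goes through, but as it stands the decisive verification is missing.
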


\section{Khovanov functors}\label{SecFunc}

In this section we review the functor given by Lipshitz and Sarkar in \cite{LLS2017cubo,LLS15} and introduce a new Khovanov functor, giving a natural transformation between them which allows us to prove that the geometric realizations of both functors are homotopy equivalent at the almost-extreme quantum grading (Corollary \ref{MequivF}). First, we introduce some notation.

Let $u$ be a state of a diagram $D$ so that $\Phi(u)=0$. Orient $G(u)$ by fixing, for each of its edges, one of its two possible orientations. We define, for each $e\in E(u)$, the subgraph $e_+ \subset G(u)$ as the connected component of $G(u)\smallsetminus \{e\}$ towards which $e$ is pointing. Note that $e_+$ is well defined, since Lemma \ref{alternatingcycles} guarantees that $G(u)$ has no cycles.


Given any two states $u> v$ of $D$, there is an inclusion of the associated graphs $G(u) \subset G(v)$, and we define the maps
\begin{equation}\label{eq:2}f_{u,v}\colon \pi_0 G(u) \twoheadrightarrow \pi_0 G(v), \quad g_{u,v}\colon E(u)\hookrightarrow E(v); \end{equation}
observe that the first one is an isomorphism if $\Phi(u)=\Phi(v)$, while the second one is always injective. Additionally, if $u\succ_i v$, there exists a Burnside morphism
\begin{align*}
h_{u,v}\colon Z(u)\lra Z(v)
\end{align*}
mapping a circle $z\in Z(u)$ either to itself if $z \notin \cO_i(u)$, or to $\cO_i(v)$ if $z \in \cO_i(u)$.

\subsection{The Khovanov functor in almost-maximal grading}

The key piece in the construction of the Khovanov spectra in \cite{LLS15} was the \textit{Khovanov functor}, whose associated stable homotopy type is a link invariant. Using Lemma \ref{lematransformation}, we restate now this functor, adapted to the particular case of the almost-maximal quantum grading.

Given a link diagram $D$ with $n$ ordered crossings, consider the functor\footnote{When working with different diagrams, we write $F_D$ to denote the functor $F$ associated to the link diagram $D$.}
$$F \colon \cube{n}\lra \cB$$ 
defined, in a vertex $u \in \cube{n}$, as\footnote{The original functor $F$ from \cite{LLS15} splits into functors $F^j$ which associates to each state $u$ a set $\mathcal{E}_{u,j}$ of enhancements so that $q(u,x) = j$, for every $x\in \mathcal{E}_{u,j}$. When particularizing to the case when $j=j_{almax}$, Corollary \ref{corollaryL} implies that if $\Phi(u)=0$, then the set $\mathcal{E}_{u,j}$ can be identified with $Z(u)$, whereas if $\Phi(u)=1$ then $\mathcal{E}_{u,j}=\{u_+\}$ .  }:
\begin{align*}
F(u) &=
\begin{cases}
Z(u) & \text{if $\Phi(u)=0$}, \\
\{u_+\} & \text{if $\Phi(u)=1$}, \\
\emptyset & \text{if $\Phi(u) >1$}. \\
\end{cases}
\end{align*}

On a morphism $\varphi_{u,v}$, with $u\succ_i v$, the span $F_{u>v}:=F(\varphi_{u,v})\colon F(u)\to F(v)$ is given, for $z \in Z(u)$ and $u_+=(u, x_+)$, by
\begin{align*}
F_{u>v}(z) &=
\begin{cases}
h_{u,v}(z) & \text{if $\Phi(v)=0$}, \\
v_+ & \text{if $\Phi(v)=1$ and $z\in \cO_i(u)$},\\
\emptyset & \text{if $\Phi(v)=1$ and $z\notin \cO_i(u)$};
\end{cases}
\\
F_{u>v}(u_+) &=
\begin{cases}
v_+ & \text{if $\Phi(v)=1$}, \\
\emptyset & \text{if $\Phi(v)=2$}.
\end{cases}
\end{align*}

Finally, we specify $2$-morphisms: Let $u\succ_i v\succ_j w$ and $u\succ_j v'\succ_i w$. We need to produce a 2-morphism $F_{u,v,v',w}$ between the 1-morphisms $F_{v>w} \circ F_{u>v}$ and $F_{v'>w} \circ F_{u>v'}$.

Consider first the case when $\Phi(u)=0$. If $\Phi(u)=\Phi(v)=\Phi(v')=0$, $\Phi(w)=1$, then the chords $e_i(u)$ and $e_j(u)$ form an alternating pair attached to some circle $z_1$, so $\cO_j(v)=\cO_i(v)$ and $\cO_i(v')=\cO_j(v')$. If $z \in Z(u)$ and $z\neq z_1$, then $F_{v>w}\circ F_{u>v}(z) = \emptyset$. 

In the case $z=z_1$, label the circles involved in the mergings and splittings as
\begin{align*}
\{z_1\}=\cO_i(u) &= \cO_j(u), & \{z_2,z_3\} &= \cO_i(v)=\cO_j(v), \\
\{z_6\} = \cO_i(w)&=\cO_j(w), & \{z_4,z_5\} &= \cO_i(v')=\cO_j(v').
\end{align*}

Therefore,
\begin{align*}
F_{v>w}\circ F_{u>v}(z_1) &= F_{v>w}(z_2+z_3) = \{z_2\}\cdot w_+ + \{z_3\}\cdot w_+ = \cO_j(v)\cdot w_+,\\
F_{v'>w}\circ F_{u>v'}(z_1) &= F_{v'>w}(z_4+z_5) = \{z_4\}\cdot w_+ + \{z_5\}\cdot w_+ = \cO_i(v')\cdot w_+.
\end{align*}

Then, the 2-morphism $F_{u,v,v',w}$ consists of a bijection between $\cO_j(v)$ and $\cO_i(v')$ given by the ladybug matching $\mu_i(v')^{-1} \circ \mu_j(v): \cO_j(v) \leftrightarrow \cL(w) \leftrightarrow \cO_i(v')$ (see Section \ref{Sec_hbarra}). 

If $\Phi(u)=0$ and we are not in the previous situation, then every summand in the formal sums $(F_{v>w} \circ F_{u>v})(z)$ and $(F_{v'>w}\circ F_{u>v'})(z)$ has a singleton as coefficient, so there is a unique choice for the 2-morphism $F_{u,v,v',w}$.

In the case when $\Phi(u)=1$, then $F_{v,w} \circ F_{u,v}$ and $F_{v',w} \circ F_{u,v'}$ are empty if $\Phi(w)>1$; otherwise, both $F_{v,w} \circ F_{u,v}$ and $F_{v',w} \circ F_{u,v'}$ map $u_+$ to the formal sum $w_+$ which has a singleton as coefficient. 

\begin{lemma}\cite{LLS2017cubo}
$F$ satisfies conditions \ref{condition:matching} and \ref{condition:hexagon} in Lemma \ref{lematransformation}.
\end{lemma}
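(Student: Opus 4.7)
The plan is to verify conditions \ref{condition:matching} and \ref{condition:hexagon} by case analysis on the values of $\Phi$ at the vertices of the face under consideration. The key observation is that $\Phi$ is non-decreasing along $>$: each elementary step is either a splitting (which preserves $\Phi$) or a merging (which increases it by exactly one). Hence on any $2$- or $3$-face the admissible patterns of $\Phi$ values are very restricted, and by Corollary \ref{corollaryL} all sets $F(-)$ with $\Phi\geq 2$ are empty, which kills most cases.

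For \ref{condition:matching}, fix a $2$-face $u\succ v,v'\succ w$. If $\Phi(u)\geq 2$, then $F(u)=\emptyset$ and both sides are empty; if $\Phi(u)=1$, then $F(u)=\{u_+\}$ and every composed span is either empty or a singleton-to-singleton map, so uniqueness forces the condition. When $\Phi(u)=0$, the interesting subcase is $\Phi(v)=\Phi(v')=0$ and $\Phi(w)=1$: this is exactly the alternating-pair configuration, where $F_{u,v,v',w}$ was \emph{defined} to be the ladybug matching $\mu_i(v')^{-1}\circ\mu_j(v)$. Then $F_{u,v',v,w}$ is by definition $\mu_j(v)^{-1}\circ\mu_i(v')$, which is literally the inverse bijection. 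All remaining subcases of $\Phi(u)=0$ have singleton coefficients in each summand of the composed spans, and (C1) again follows from uniqueness of bijections between singletons.

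For \ref{condition:hexagon}, apply the same strategy to a $3$-face, splitting on $\Phi(u)$ and $\Phi(z)$. Cases with $\Phi(u)\geq 2$ or $\Phi(z)\geq 3$ are trivial (empty source or target). For $\Phi(u)=1$, every $2$-morphism in the hexagon lies between singletons, so commutativity is automatic. For $\Phi(u)=0$ and $\Phi(z)\leq 1$, the same observation applies. The substantive case is $\Phi(u)=0$ with $\Phi(z)=2$, where two of the three elementary steps from $u$ to $z$ are mergings. Here some of the vertical and horizontal arrows in the hexagon are nontrivial bijections built from the maps $\mu_i(-)$ and $\rho_{-,-}$ from Section~\ref{Sec_hbarra}. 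The plan is to decompose the hexagon into squares by inserting the intermediate sets $\cL(-)$ provided by the ladybug construction; each such square is then either one of the naturality squares of Lemma~\ref{lemacuadrados}(1)--(2), the commutative square of Lemma~\ref{lematech1}, or a square of singleton-to-singleton bijections forced by uniqueness. Pasting these squares around the hexagon yields the required commutativity.

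The step I expect to be the main obstacle is the combinatorial bookkeeping in \ref{condition:hexagon} when $\Phi(u)=0$ and $\Phi(z)=2$. In this regime one must enumerate all orders in which the two mergings can occur along the six paths of the $3$-face, track precisely which circles are involved in each merging, and identify on each edge of the hexagon which composition of $h_{-,-}$, $\mu_i(-)$ and $\rho_{-,-}$ appears. Once this bookkeeping is made explicit, each constituent square is straightforward, but ensuring the enumeration is exhaustive (and, in particular, that the cases where $e_i,e_j,e_k$ include an alternating triple or alternating pair are handled correctly as forbidden by Proposition~\ref{prop:char}) is the delicate part.
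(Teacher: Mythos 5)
There is a genuine gap in your treatment of condition \ref{condition:hexagon}: you have swapped the trivial and the substantive cases. If $\Phi(z)=2$ then $F(z)=\emptyset$, so every span $F(\varphi_{w,z})$ into $F(z)$ is the empty span, all six triple fibre products in the hexagon are empty, and the diagram commutes for free --- the case you single out as ``the substantive case'' is vacuous. Conversely, the case you dismiss with ``the same observation applies'' ($\Phi(u)=0$ and $\Phi(z)=1$) is precisely where the content lies: here exactly one merging occurs among the three elementary steps, the ladybug $2$-morphism $\mu_i(v')^{-1}\circ\mu_j(v)$ appears on some of the six $2$-faces, and the coefficients $\cO_j(v)\cdot w_+$ are two-element sets, so commutativity is \emph{not} automatic. (Take, e.g., two monochords forming an alternating pair together with a third $2$-free monochord: then $\Phi(u)=0$, $\Phi(z)=1$, and the face $u\succ v,v'\succ w$ carrying the alternating pair contributes a genuine ladybug bijection between two-element coefficients.) Your analysis of \ref{condition:matching} is correct, and your proposed technique for the hexagon --- decomposing it into the squares of Lemmas \ref{lemacuadrados} and \ref{lematech1} --- is the right kind of argument, but it must be carried out in the case $\Phi(u)=0$, $\Phi(z)=1$, which requires enumerating the admissible configurations of the three chords of $D(u)$ involved (two monochords forming one alternating pair plus a third monochord; three monochords forming two alternating pairs; two non-alternating monochords and a bichord; one monochord and two parallel bichords; three parallel bichords) and checking each.

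For calibration: the paper does not prove this lemma itself --- it is quoted from \cite{LLS2017cubo} --- but the parallel statement for $M$ is Lemma \ref{Mwelldefined}, whose proof makes explicit that the non-trivial verifications occur exactly when $\Phi(\uno$-vertex$)=0$ and $\Phi(\cero$-vertex$)=1$ with $|\cL|=2$, and proceeds by the five-case enumeration above. Redirecting your square-pasting argument to that regime (with the coefficient sets $\cO_i(-)$ in place of $\cL(-)$, related by the bijections $\mu_i$) would give a correct proof.
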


The (strictly unitary lax) 2-functor $F$ defined above is the Khovanov functor given in \cite{LLS15} at the almost-maximal quantum grading and, if $D$ has $n_-$ negative crossings, the almost-extreme (maximal) Khovanov spectrum is defined as 
\begin{equation}\label{eq:F}
\cX^{\jalmax}_D = \Sigma^{-n_-}\Sigma^\infty \Tot F_D.
\end{equation}

\subsection{A new equivalent functor}

We introduce now a (strictly unitary lax) 2-functor $M \colon \cube{n} \to \cB$, with the property that its realization coincides with the realization of $F$, as will be show in Corollary \ref{MequivF}. As before, we start by defining\footnote{When working with different diagrams, we write $M_D$ to denote the functor $M$ associated to the link diagram $D$.} $M$ for the vertices of the cube, corresponding to the states of $D$:

\begin{align*}
M(u) &=
\begin{cases}
\pi_0G(u)\cup E(u) & \text{if $\Phi(u)=0$,} \\
u_+ & \text{if $\Phi(u)=1$,} \\
\emptyset & \text{if $\Phi(u) >1$.}
\end{cases}
\end{align*}

On a morphism $\varphi_{u,v}$, with $u\succ_i v$, the span $M_{u>v}:=M(\varphi_{u,v})\colon M(u)\to M(v)$ is defined either on a connected component $C$ or an edge $e$ of $G(u)$ (if $\Phi(u) = 0$), or on the enhanced state $u_+$ (if $\Phi(u) = 1$), as follows:
\begin{align*}
M_{u>v}(C) &=
\begin{cases}
f_{u,v}(C) & \text{if $\Phi(v)=0$}, \\
(\cL(v)\cap f_{u,v}(C))\cdot v_+  & \text{if $\Phi(v)=1$}; \\
\end{cases}
\\
M_{u>v}(e) &=
\begin{cases}
g_{u,v}(e) & \text{if $\Phi(v)=0$}, \\
(\mu_i(u) (e_+ \cap \cO_i(u)) \cdot v_+  & \text{if $\Phi(v)=1$}; \\
\end{cases}
\\
M_{u>v}(u_+) &=
\begin{cases}
v_+ & \text{if  $\Phi(v)=1$}, \\
\emptyset & \text{if  $\Phi(v)=2$}.
\end{cases}
\end{align*}

\begin{remark}\label{rem1}
Observe that the expression $\cL(v)\cap f_{u,v}(C)$ equals either $\cL(v)$, in the case when $\cO_i(u) \cap C \neq \emptyset$, or the empty set otherwise.
\end{remark}

\begin{remark}\label{rem2}
Observe that the evaluation of the expression $e_+ \cap \cO_i(u)$ leads to either the empty set or one or two circles, as illustrated in Figure \ref{fige+}. When $e_+ \cap \cO_i(u) = z \in Z(u)$, there are two possibilities: if $\flat{v} = \flat{\uno}$, then $\mu_i(u) (e_+ \cap \cO_i(u))$ maps $z$ to one of the two arcs in $\cL(v)$; however, if $\flat{v} = \flat{\uno}-1$, then $\mu_i(u) (e_+ \cap \cO_i(u))$ maps $z$ to the singleton $\cL(v)$.  
\end{remark}

\begin{figure}[t]
\centering
\includegraphics[width = 10cm]{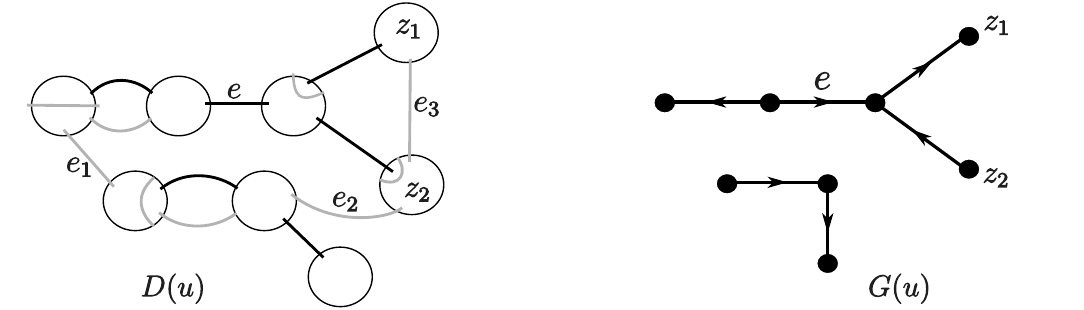}
\caption{\small{We illustrate the three cases considered in Remark \ref{rem2}. Given $D(u)$ and the orientation shown in $G(u)$, $e_+ \cap \cO_1(u) = \emptyset$, $e_+ \cap \cO_2(u) = \{z_2\}$ and $e_+ \cap \cO_3(u) = \{z_1,z_2\}$.}}
\label{fige+}
\end{figure}

Finally, given $u\succ_i v\succ_j w$ and $u\succ_j v'\succ_i w$, we define the 2-morphism $M_{u,v,v',w}$ between the 1-morphisms $M_{v>w} \circ M_{u>v}$ and $M_{v'>w} \circ M_{u>v'}$ as follows. 


First, note that $M_{u,v,v',w}$ is trivially defined when $\Phi(u) = \Phi(v)$ or $|\cL(w)|=1$, since it is a bijection between singletons. Then, we just need to specify the cases when $\Phi(u)=0$, $\Phi(w)=1$ and $|\cL(w)|=2$, depicted in Figure \ref{casesLu0Lw2}. Moreover, the value of $M_{v>w} \circ M_{u>v}$ and $M_{v'>w} \circ M_{u>v'}$ on a component $C \in \pi_0G(u)$ equals the empty set or a singleton, unless $f_{u,w}(C)$ is the component of $G(w)$ containing two cycles; a similar reasoning applies for edges $e \in G(u)$, unless $e$ (or possibly $g_{u,v}(e)$ or $g_{u,v'}(e)$) points to the two circles involved in the unique merging. We define $M_{u,v,v',w}$ in such components and edges:


\begin{figure}[h]
\centering
\includegraphics[width = 12.15cm]{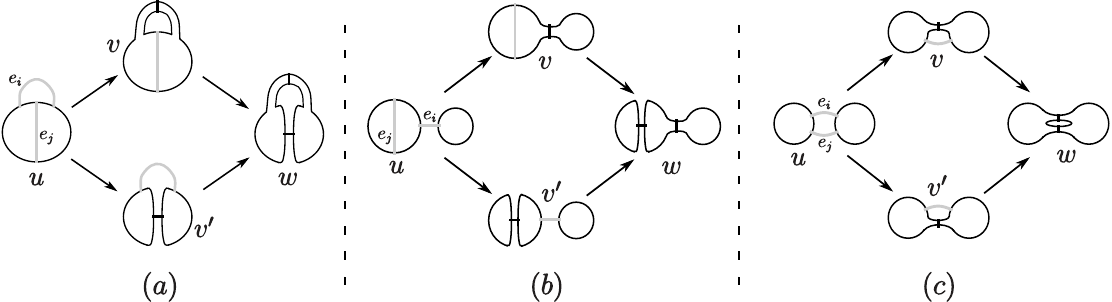}
\caption{\small{Cases ($a$), ($b$) and ($c$) in the definition of $M_{u,v,v',w}$. In order $|\cL(w)| = 2$, $\flat{w}$ must be equal to $\flat{\uno}$ and therefore the vertices corresponding to the two circles depicted in $D(u)$ in cases $(b)$ and $(c)$ must belong to the same connected component in $G(u)$.}}
\label{casesLu0Lw2}
\end{figure}

\begin{enumerate}
\item[($a$)] If $\Phi(u)=\Phi(v)=\Phi(v')=0$, $\Phi(w)=1$ and $|\cL(w)|=2$, then $e_i(u)$ and $e_j(u)$ form an alternating pair (see Figure \ref{casesLu0Lw2}(a)) and we have
\begin{align*}
M_{v>w}\circ M_{u>v}(C) = \cL(w) \cdot w_+ &\overset{\Id}{\longleftrightarrow} M_{v'>w}\circ M_{u>v'}(C) = \cL(w)\cdot w_+, \\
M_{v>w}\circ M_{u>v}(e) = \cL(w) \cdot w_+ &\overset{\Id}{\longleftrightarrow} M_{v'>w}\circ M_{u>v'}(e) = \cL(w) \cdot w_+.
\end{align*}

\item[($b$)] If $\Phi(u)=\Phi(v')=0$ and $\Phi(v)=\Phi(w)=1$ and $|\cL(w)|=2$, then $e_i(u)$ is a bichord and $e_j(u)$ a monochord (see Figure \ref{casesLu0Lw2}(b)) and we have
\begin{align*}
M_{v>w}\circ M_{u>v}(C) = \cL(v)\cdot w_+ &\overset{\rho_{v,w}}{\longleftrightarrow} M_{v'>w}\circ M_{u>v'}(C) = \cL(w)\cdot w_+, \\
M_{v>w}\circ M_{u>v}(e) = \cL(v)\cdot w_+ &\overset{\rho_{v,w}}{\longleftrightarrow} M_{v'>w}\circ M_{u>v'}(e) = \cL(w)\cdot w_+.
\end{align*}


\item[($c$)] If $\Phi(u)=0$ and $\Phi(v)=\Phi(v')=\Phi(w)=1$ and $|\cL(w)|=2$, then $e_i(w)$ and $e_j(w)$ are parallel bichords (see Figure \ref{casesLu0Lw2}(c)) and we have:
\begin{align*}
M_{v>w}\circ M_{u>v}(C) = \cL(v)\cdot w_+ &\overset{\rho_{v',w}^{-1}\rho_{v,w}}{\longleftrightarrow} M_{v'>w}\circ M_{u>v'}(C) = \cL(v')\cdot w_+, \\
M_{v>w}\circ M_{u>v}(e) = \cL(v)\cdot w_+ &\overset{\rho_{v',w}^{-1}\rho_{v,w}}{\longleftrightarrow} M_{v'>w}\circ M_{u>v'}(e) = \cL(v')\cdot w_+. 
\end{align*}
\end{enumerate}

\begin{lemma}\label{Mwelldefined}
The 2-functor $M$ satisfies conditions \ref{condition:matching} and \ref{condition:hexagon} in Lemma \ref{lematransformation} and therefore is well defined.
\end{lemma}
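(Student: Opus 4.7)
The plan is to verify conditions \ref{condition:matching} and \ref{condition:hexagon} of Lemma \ref{lematransformation} by stratifying the relevant faces of the cube according to the values of $\Phi$ at their vertices.

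Condition \ref{condition:matching} is immediate from the construction. In each of cases $(a), (b), (c)$ the $2$-morphism $M_{u,v,v',w}$ is an explicit invertible bijection (the identity, $\rho_{v,w}$, or $\rho_{v',w}^{-1}\circ \rho_{v,w}$ respectively), so swapping $v \leftrightarrow v'$ in the labelling of the two-dimensional face reverses source and target and produces precisely the inverse bijection.

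For condition \ref{condition:hexagon}, I fix a three-dimensional face $u > v,v',v'' > w,w',w'' > z$ and observe that $\Phi$ is monotone non-decreasing along $>$ (for instance, by \eqref{eq:4}). This splits the problem into four regimes:
\begin{itemize}
\item $\Phi(z) \geq 2$: every object in the hexagon is empty, so the diagram commutes trivially.
\item $\Phi(u) = 1$, and hence $\Phi \equiv 1$ throughout the face: each $M$-value is a singleton and each $2$-morphism is the forced bijection of singletons, so commutativity is automatic.
\item $\Phi(u) = \Phi(z) = 0$: the restriction of $M$ to the face takes values in $\Set \subset \cB$ with morphisms induced by $f_{u,v}$ and $g_{u,v}$; all six $2$-morphisms are identities and the hexagon commutes.
\item $\Phi(u) = 0$ and $\Phi(z) = 1$: the substantive case.
\end{itemize}

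In the substantive case, exactly one of the three axes spanning the face corresponds to a merging, and the eight vertices partition according to whether that coordinate has been flipped. Each of the six two-faces of the hexagon therefore falls into case $(a), (b)$ or $(c)$, and the corresponding $2$-morphism is either the identity or an explicit composition of bijections of the form $\rho_{-,-}$. Tracking an element $C \in \pi_0 G(u)$ or an edge $e \in E(u)$ around the hexagon, commutativity reduces to concatenating elementary squares already known to commute: Lemma \ref{lematech1} handles the pure $\rho$-squares, while Lemma \ref{lemacuadrados} handles the squares that mix the bijections $\mu$ and $\rho$.

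The main obstacle I anticipate lies in the sub-case where $|\cL(z)| = 2$, in which several edges of the hexagon carry genuinely non-trivial bijections from cases $(b)$ and $(c)$ simultaneously. Here one must consistently label the two arcs of the ladybug set at each of the intermediate vertices via the planarity statement of Lemma \ref{alternatingcycles}$(2b)$, so that the $\rho$-bijections line up correctly; the final compatibility will then follow from combining Lemmas \ref{lemma:ell} and \ref{lemacuadrados}$(2)$.
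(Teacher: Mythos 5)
Your stratification by the values of $\Phi$, and the reduction to the substantive case $\Phi(u)=0$, $\Phi(z)=1$, $|\cL(z)|=2$, agree with the paper's proof, and the trivial regimes are disposed of correctly. However, the organizing principle you propose for the substantive case is false: it is not true that ``exactly one of the three axes spanning the face corresponds to a merging'' with the eight vertices partitioned according to whether that coordinate has been flipped. Whether surgery along a given chord is a merging depends on the state at which it is performed. For instance, if two of the three chords, say $e_i$ and $e_j$, form an alternating pair of monochords, then resolving $e_i$ first turns $e_j$ into a bichord and vice versa, so the merging occurs along axis $j$ on one maximal chain and along axis $i$ on another, and $\Phi=1$ at a vertex exactly when \emph{both} coordinates $i$ and $j$ have been flipped (these are configurations $(b)$ and $(c)$ of Figure \ref{01} in the paper). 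There is no single ``merging coordinate'', so your proposed classification of the six $2$-faces cannot be read off from such a partition.

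Beyond this, the actual content of the lemma is deferred rather than carried out. One must (i) enumerate which distributions of $\Phi$ over the eight vertices can occur --- the paper shows that three of the eight a priori patterns are geometrically impossible and treats the remaining five one by one --- and (ii) verify in each case that the composite of the six $2$-morphisms around the hexagon is the identity; this is a telescoping cancellation in two of the cases and, in the others, the boundary of a union of one, two or three commuting squares supplied by Lemma \ref{lematech1}. Your appeal to Lemma \ref{lemacuadrados} is also misplaced: the $2$-morphisms of $M$ are built only from identities and the bijections $\rho$ (the maps $\mu_i(u)$ enter the coefficients of the $1$-morphisms, but the nontrivial $2$-morphisms are always full bijections of two-element ladybug sets), so the hexagon for $M$ contains no mixed $\mu$--$\rho$ squares. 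Lemma \ref{lemacuadrados} is what is needed later for the natural transformation $\gamma$ (Lemma \ref{NatTransWellDefined}), not here.
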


\begin{proof}
Condition \ref{condition:matching} follows from the definition of $M$. Consider an arbitrary three-dimensional face and denote its vertices as follows:
\begin{equation}\label{cubo01}
\xymatrix@C=1pc@R=1pc{
111\ar[dd]\ar[rr]^{M_{111>011}}\ar[dr] && 011\ar[dd]\ar[dr]^{M_{011>001}} & \\
&                                   101\ar[rr]\ar[dd] && 001 \ar[dd]^{M_{001>000}}\\
110\ar[rr]\ar[dr] && 010\ar[dr]&\\
&                    100\ar[rr] && 000. }\end{equation}

In order to prove condition \ref{condition:hexagon}, we need to show that $2$-morphisms in this cube commute. To do so, we study the bijections obtained when we \emph{move} along the cube. More precisely, starting from $M_{001>000}\circ M_{011>001}\circ M_{111>011}$ we move to $M_{010>000}\circ M_{011>010}\circ M_{111>011}$ and continue moving along the cube following Figure \eqref{movingcube2} until we reach again $M_{001>000}\circ M_{011>001}\circ M_{111>011}$. 

\begin{figure}[h]
\includegraphics[width=\textwidth]{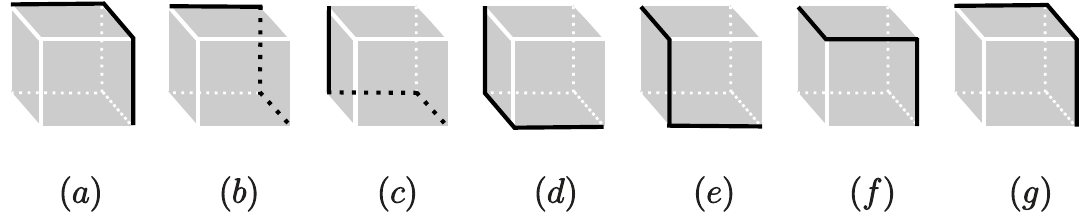}
\caption{Moving along the cube. Paths $(a)$, $(b)$ and $(c)$ represent $M_{001>000}\circ M_{011>001}\circ M_{111>011}$, \, $M_{010>000}\circ M_{011>010}\circ M_{111>011}$ and $M_{010>000}\circ M_{110>010}\circ M_{111>110}$, respectively.}\label{movingcube2}
\end{figure}

To simplify notation, write $\rho_i$ for the morphism $\rho_{u,v}$ if $u\succ_i v$; then, the commutative diagram of Lemma \ref{lematech1} associated to $u \succ_i v \succ_j w$ and $u \succ_j v' \succ_i w$ becomes:
\begin{equation}\label{eq:tech}\xymatrix{
\cL(u)\ar[r]^{\rho_i}\ar[d]^{\rho_j} & \cL(v)\ar[d]^{\rho_j} \\
\cL(v')\ar[r]^{\rho_i} & \cL(w).
}\end{equation}

We just need to study the cases when $\Phi(111) = 0$, $\Phi(000)=1$ and $|\cL(000)|=2$ (otherwise, the coefficients involved in all compositions of $1$-morphisms of the edges of the cube are singletons, and therefore the $2$-morphisms commute trivially). In principle, up to permutation, there are 8 different situations attending to the value of $\Phi$ in each vertex, depicted in Figure \ref{01}. However, it is not hard to check that situations $(a)$, $(e)$ and $(f)$ leads to inconsistency in the chord diagrams, and therefore they are not possible. We study the remaining 5 possible situations:  

\begin{figure}[h]
\includegraphics[width=\textwidth]{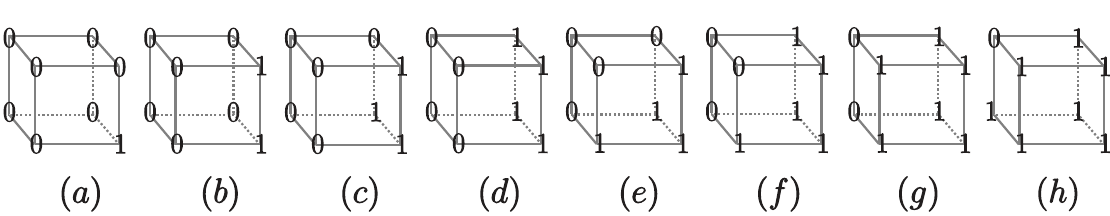}
\caption{Situations $(a)$, $(e)$ and $(f)$ are not possible when considering $\Phi(111) = 0$, $\Phi(000)=1$. Each $0$ or $1$ represent the value of $\Phi$ in each of the vertices of the cube \eqref{cubo01}.}\label{01}
\end{figure}

\begin{enumerate}
\item Figure \ref{01}$(b)$ (i.e., $\Phi(110) = \Phi(101) = \Phi(011) = \Phi(100) = \Phi(010) = 0 , \Phi(001) = 1$):  $D(111)_{000}$ contains three monochords, two of them constituting an alternating pair. Coefficients involved in the three-fold compositions when moving around the cube are either singletons or the following:
\begin{align*}
\cL(001)\overset{\rho_3}{\longleftrightarrow} \cL(000) = \cL(000) = \cL(000) = \cL(000) \overset{\rho_3^{-1}}{\longleftrightarrow} \cL(001) = \cL(001).
\end{align*}
The composition of the $2$-morphisms connecting coefficients is the identity morphism.

\item Figure \ref{01}$(c)$ (i.e., $\Phi(110) = \Phi(101) = \Phi(011) = \Phi(100) = 0 , \Phi(001) = \Phi(010)  = 1$): $D(111)_{000}$ contains three monochords constituting two alternating pairs. Coefficients involved in the three-fold compositions when moving around the cube are either singletons or the following: \begin{align*}
\cL(001)\overset{\rho_2^{-1}\rho_3}{\longleftrightarrow} \cL(010) = \cL(010) \overset{\rho_2}{\longleftrightarrow} \cL(000) = \cL(000) \overset{\rho_3^{-1}}{\longleftrightarrow} \cL(001) = \cL(001). \end{align*} 
The composition of the $2$-morphisms connecting coefficients is the identity morphism.

\item Figure \ref{01}$(d)$: $D(111)_{000}$ contains two monochords not forming an alternating pair and a bichord. Coefficients involved in the three-fold compositions when moving around the cube are either singletons or the following:
\begin{align*}
\cL(011) = \cL(011) \overset{\rho_3}{\longleftrightarrow} \cL(010) \overset{\rho_2}{\longleftrightarrow} \cL(000) = \cL(000) \overset{\rho_3^{-1}}{\longleftrightarrow} \cL(001) \overset{\rho_2^{-1}}{\longleftrightarrow} \cL(011). \end{align*}
The composition of the $2$-morphisms is the following square, which is a particular case of \eqref{eq:tech}:
\[
\xymatrix@C=1pc@R=1pc{
\cL(011)\ar[dd]^{\rho_3} &\\
&\cL(001)\ar[ul]_(.5){\rho_2^{-1}}\\
\cL(010)\ar[dr]^{\rho_2}& \\
 & \cL(000)\ar[uu]_(.5){\rho_3^{-1}}.
}\]

\item Figure \ref{01}$(g)$: $D(111)_{000}$ contains one monochord and two parallel bichords not forming an alternating triple. Coefficients involved in the three-fold compositions when moving around the cube are either singletons or the following:
\begin{align*}
\cL(011) = \cL(011) \overset{\rho_3}{\longleftrightarrow} \cL(010) \overset{\rho^{-1}_{1} \rho_2}{\longleftrightarrow} \cL(100) \overset{\rho_3^{-1}}{\longleftrightarrow} \cL(101) = \cL(101) \overset{\rho^{-1}_{2} \rho_1}{\longleftrightarrow} \cL(011). \end{align*}
The composition of $2$-morphisms is the boundary of the union of two squares of the form \eqref{eq:tech} along an edge, which commutes: 
\[
\xymatrix@C=1pc@R=1pc{
 && \cL(011)\ar[dd]_(.25){\rho_3} & \\
&                                   \cL(101)\ar[rr]^(.3){\rho_1} && \cL(001)\ar[lu]_(.35){\rho_2^{-1}} \\&& \cL(010)\ar[dr]^{\rho_2}&\\
&                    \cL(100) \ar[uu]^{\rho_3^{-1}} && \cL(000) \ar[ll]^{\rho_1^{-1}}. }\]

\item Figure \ref{01}$(h)$: $D(111)_{000}$ contains three parallel bichords. Coefficients involved in the three-fold compositions when moving around the cube are either singletons or the following:
\begin{align*}
\cL(011) = \cL(011) \overset{\rho_1^{-1}\rho_3}{\longleftrightarrow} \cL(110) = \cL(110) \overset{\rho^{-1}_3\rho_2}{\longleftrightarrow} \cL(101) = \cL(101) \overset{\rho_2^{-1}\rho_1}{\longleftrightarrow} \cL(011).
\end{align*}
The composition of $2$-morphisms is the boundary of the union of three squares of the form \eqref{eq:tech} along their common edges, which commutes:
\[\xymatrix@C=1pc@R=1pc{
                  && \cL(011)\ar[dd]_(.25){\rho_3} & \\
&                                   \cL(101)\ar[rr]^(.25){\rho_1} && \cL(001) \ar[ul]_{\rho_2^{-1}}\\
\cL(110)\ar[dr]_{\rho_2} && \cL(010)\ar[ll]_(.25){\rho_1^{-1}}&\\
&                    \cL(100)\ar[uu]_(.25){\rho_3^{-1}} &&
} \]\qedhere 
\end{enumerate}
\end{proof}

Once we have defined $M$, we will prove that its geometric realization is equivalent to that of the Khovanov functor $F$. To do so, we introduce a natural transformation $\gamma\colon M\to F$ and show that the induced homomorphism $\gamma_* \colon C_*(M; \mathbb{Z}) \to C_*(F; \mathbb{Z})$ is in fact an isomorphism (see Section \ref{ss:32}).

Given a state $u\in \cube{n}$, we set the natural natural transformation $\gamma$ as follows:
\begin{align*}
\gamma_u(C) &= \sum_{z\in C} z, \\
\gamma_u(e) &= \sum_{z\in e_+} z,\\
\gamma_u(u_+) &= u_+.\\
\end{align*}

Next, for each $u\succ_i v$, we define a $2$-morphism $\gamma_{u,v}: F_{u>v} \circ \gamma_u \to M_{u>v} \circ \gamma_v$, i.e., $\gamma_{u,v}$ makes the following diagram commute:
 \[
  \xymatrix@C=10ex{
    M(u)\ar[r]^-{\gamma_u}\ar[d]^-{M_{u>v}}&
    F(u)\ar[d]_-{F_{u>v}} \\
    M(v)\ar[r]^-{\gamma_v}&F(v).
  }
  \]
If $\Phi(u) = 0$ and $\Phi(v)=0$, $\gamma_{u,v}$ is defined as the identity:
\begin{align*}
F_{u>v}\circ \gamma_u(C) &= \sum_{z\in C} h_{u,v}(z) \overset{\Id}{\longleftrightarrow} \sum_{z\in f_{u,v}(C)}z \, =\, \gamma_v\circ M_{u>v}(C), \\
F_{u>v}\circ \gamma_u(e) &= \sum_{z\in e_+} h_{u,v}(z) \overset{\Id}{\longleftrightarrow} \sum_{z\in g_{u,v}(e)_+}z  \, =\, \gamma_v\circ M_{u>v}(e). 
\end{align*}
In the case when $\Phi(u) = 0$ and $\Phi(v)=1$ we define $\gamma_{u,v}$ as $\mu_i(u)$:
\begin{align*}
F_{u>v}\circ \gamma_u(C) &= (\cO_i(u)\cap C)\cdot v_+ \overset{\mu_i(u)}{\longleftrightarrow} (\cL(v)\cap f_{u,v}(C))\cdot v_+ = \gamma_v\circ M_{u>v}(C), \\
F_{u>v}\circ \gamma_u(e) &= (\cO_i(u)\cap e_+)\cdot v_+\overset{\mu_i(u)}{\longleftrightarrow} (\mu_i(u) (e_+ \cap \cO_i(u)) \cdot v_+ = \gamma_v\circ M_{u>v}(e).
\end{align*}
And when $\Phi(u) = 1 = \Phi(v)$, $\gamma_{u,v}$ is defined as the identity:
\begin{align*}
F_{u>v} \circ \gamma_u(u_+) &= v_+ \overset{\Id}{\longleftrightarrow} v_+  \, =\, \gamma_v \circ M_{u>v}(u_+). 
\end{align*}

\begin{lemma}\label{NatTransWellDefined}
The natural transformation $\gamma_u$ is well defined. 
\end{lemma}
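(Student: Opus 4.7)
The plan is to check that the data $(\gamma_u,\gamma_{u,v})$ assembles into a strictly unitary lax $2$-functor $H\colon \cube{n+1}\to \cB$ with $H|_{\cube{n}\times\{1\}}=M$ and $H|_{\cube{n}\times\{0\}}=F$, in the sense of Definition \ref{df:laxcube}. By Lemma \ref{lematransformation}, this reduces to checking conditions \ref{condition:matching} and \ref{condition:hexagon} on the two- and three-dimensional faces of $\cube{n+1}$. Faces entirely inside $\cube{n}\times\{1\}$ (resp.\ $\cube{n}\times\{0\}$) are already handled by Lemma \ref{Mwelldefined} (resp.\ the analogous statement for $F$ from \cite{LLS2017cubo}), so only the mixed faces require new verification.

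First I would dispose of the mixed two-dimensional faces (squares whose horizontal edges are $\gamma_u,\gamma_v$ and whose vertical edges are $M_{u>v}$ and $F_{u>v}$). Condition \ref{condition:matching} on these squares is automatic because $\gamma_{u,v}$ is declared by a single formula (either the identity or $\mu_i(u)$), so the $2$-morphism assigned to the two triangulations of the square is forced to be mutually inverse.

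The real content lies in the mixed three-dimensional faces, which are prisms over the two-dimensional faces $u\succ v,v'\succ w$ of $\cube{n}$: the top is $M_{u,v,v',w}$, the bottom is $F_{u,v,v',w}$, and the four vertical sides are the $2$-cells $\gamma_{u,v},\gamma_{u,v'},\gamma_{v,w},\gamma_{v',w}$. I would prove the prism commutes by a case analysis on $(\Phi(u),\Phi(v),\Phi(v'),\Phi(w))$. When $\Phi(u)=1$, every later $\Phi$-value is at least $1$, hence all involved spans are either singletons or empty, so all $2$-morphisms are identities and commutativity is trivial. When $\Phi(u)=\Phi(w)=0$ one applies the $\Phi(v)=\Phi(v')=0$ case of the definition, where $\gamma_{u,v},\gamma_{u,v'},\gamma_{v,w},\gamma_{v',w}$ are identities, so the prism commutes for free. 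This leaves the genuine case $\Phi(u)=0,\Phi(w)=1$, which I would further split according to $|\cL(w)|$:
\begin{itemize}
\item If $|\cL(w)|=1$, every coefficient in every composition is a singleton and there is nothing to check.
\item If $|\cL(w)|=2$ and $\Phi(v)=\Phi(v')=0$, the needed identity is $\rho_{v,w}\circ \mu_i(u)=\mu_i(v')\circ h_{u,v'}$ (and its $j$-analogue), which is exactly Lemma \ref{lemacuadrados}(1); the evaluation on both components $C\in\pi_0G(u)$ and edges $e\in E(u)$ reduces to this square because $\gamma_u$ is a formal sum over $z\in C$ or $z\in e_+$ and each $z$ is moved coherently by $h$ and $f$/$g$.
\item If $|\cL(w)|=2$ and exactly one of $\Phi(v),\Phi(v')$ equals $1$, the prism again reduces to one instance of Lemma \ref{lemacuadrados}(1).
\item If $|\cL(w)|=2$ and $\Phi(v)=\Phi(v')=1$, the compatibility of the four bijections $\mu_i(u),\mu_j(u),\rho_{v,w},\rho_{v',w}$ is precisely Lemma \ref{lemacuadrados}(2).
\end{itemize}

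In each case one reads off the coefficient of $w_+$ in $F_{v>w}\circ F_{u>v}\circ\gamma_u$ and in $\gamma_w\circ M_{v>w}\circ M_{u>v}$ on a generator $C$ or $e$ of $M(u)$, observes that both coefficients are a subset of $\cL(w)$, and then invokes Lemma \ref{lemacuadrados} to match them up via the prescribed $\gamma_{\cdot,\cdot}$'s. The main obstacle is purely bookkeeping: one has to carry the decompositions $\gamma_u(C)=\sum_{z\in C}z$ and $\gamma_u(e)=\sum_{z\in e_+}z$ through the compositions and check that the ladybug bijections $\mu$ and $\rho$ match the circle-to-arc assignments in each subcase. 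Since every required coherence is already recorded in Lemmas \ref{lematech1} and \ref{lemacuadrados}, no new geometric input is needed beyond those.
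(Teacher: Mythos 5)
Your overall strategy is the same as the paper's: reduce to conditions \ref{condition:matching} and \ref{condition:hexagon} for the $(n+1)$-cube $H$, observe that everything is a singleton unless $\Phi(u)=0$, $\Phi(w)=1$ and $|\cL(w)|=2$, and then settle the three remaining prisms by tracking the bijections around the boundary and invoking Lemma \ref{lemacuadrados}. Your third and fourth bullets match the paper's treatment of the cases $(\Phi(v),\Phi(v'))=(1,0)$ (or $(0,1)$) and $(1,1)$ exactly.

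There is, however, a concrete error in your second bullet, the case $|\cL(w)|=2$ with $\Phi(v)=\Phi(v')=0$. You claim the needed identity is $\rho_{v,w}\circ\mu_i(u)=\mu_i(v')\circ h_{u,v'}$ and that this is Lemma \ref{lemacuadrados}(1). But that lemma has hypotheses $\Phi(v)=\Phi(w)=1$, and in your case $\Phi(v)=0$: the maps $\rho_{v,w}$ and $\mu_i(u)$ are simply not defined here ($\rho$ requires both states to have $\Phi=1$, and $\mu_i(u)$ requires $u\succ_i v$ to be a merging). So the identity you write down is ill-typed and the cited lemma does not apply; this is in fact the situation your \emph{third} bullet covers. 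What actually has to be checked in the $(0,0)$ case is that the ladybug matching $F_{u,v,v',w}=\mu_i(v')^{-1}\circ\mu_j(v)$ is intertwined by the vertical $2$-cells $\gamma_{v,w}=\mu_j(v)$ and $\gamma_{v',w}=\mu_i(v')$ with $M_{u,v,v',w}=\Id_{\cL(w)}$, i.e.\ the tautological cancellation $\mu_i(v')\circ\bigl(\mu_i(v')^{-1}\circ\mu_j(v)\bigr)\circ\mu_j(v)^{-1}=\Id$, which is how the paper disposes of this case. The error is easily repaired and does not affect the other cases, but as written that step of your argument fails.
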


\begin{proof}
We need to prove that $\gamma_u$ satisfies conditions \ref{condition:matching} and \ref{condition:hexagon} in Lemma~\ref{lematransformation}. Condition 
\ref{condition:matching} follows from definition of $\gamma_u$. Next, we prove \ref{condition:hexagon} by showing that $2$-morphisms in the following cube commutes, similar as we did in proof of Lemma~\ref{Mwelldefined}:

\[\xymatrix@C=1pc@R=1pc{
M(u) \ar[dd]^{\gamma_u}\ar[rr]\ar[dr] && M(v)\ar'[d][dd]_(-.25){\gamma_v}\ar[dr] & \\
&                                   M(v')\ar[rr]\ar[dd]_(.25){\gamma_v'} && M(w) \ar[dd]^{\gamma_w}\\
F(u)\ar'[r][rr]\ar[dr] && F(v)\ar[dr]&\\
&                    F(v')\ar[rr] && F(w). }\]

The commutativity is clear when all coefficients are singletons, i.e., it holds unless $\Phi(u)=0$, $\Phi(w)=1$ and $|\cL(w)|=2$; thus, we have to study the three following situations\footnote{The case $\Phi(u) = 0,  \Phi(v) = 0,  \Phi(v') = 1,  \Phi(w) = 1$ is symmetric to (\ref{Caso2Eles}).}, corresponding again to the cases illustrated in Figure \ref{casesLu0Lw2}:
\begin{align} \label{Caso1Eles}
\Phi(u) &= 0, & \Phi(v) &= 0, & \Phi(v') &= 0, & \Phi(w) &= 1; \\ \label{Caso2Eles}
\Phi(u) &= 0, & \Phi(v) &= 1, & \Phi(v') &= 0, & \Phi(w) &= 1;  \\ \label{Caso3Eles}
\Phi(u) &= 0, & \Phi(v) &= 1, & \Phi(v') &= 1, & \Phi(w) &= 1.  
\end{align}

Now, for each of the three cases above, we have to study the bijections obtained when we \textit{move} along the cube: starting from $\gamma_w \circ M_{v>w}\circ M_{u>v}$, we move to $F_{v>w}\circ \gamma_v\circ M_{u>v}$, and continue moving along the cube following Figure \ref{movingcube2}, until we reach again $\gamma_w \circ M_{v>w}\circ M_{u>v}$.

We just need to consider the action of those compositions on the component $C\in \pi_0G(u)$ such that $f_{u,w}(C)\in \pi_0G(w)$ contains two cycles (otherwise, the value of $F_{v>w}\circ \gamma_v\circ M_{u>v}$ is trivial, as explained in the proof of Lemma \ref{Mwelldefined}). 

Consider the case \eqref{Caso1Eles}, where edges $e_i(u)$ and $e_j(u)$ form an alternating pair (see Figure \ref{casesLu0Lw2}$(a)$). Then $F_{v>w}\circ \gamma_v\circ M_{u>v}(C)$ equals $\cL(w)\cdot w_+$, and we obtain the following values when moving along the cube as in Figure \ref{movingcube2}: 
$$ \cL(w)\cdot w_+ \overset{\mu_j(v)^{-1}}{\longleftrightarrow} \cO_j(v)\cdot w_+ = \cO_j(v)\cdot w_+ \overset{\mu_i(v')^{-1}\mu_j(v)}\longleftrightarrow \cO_i(v')\cdot w_+ = \cO_i(v')\cdot w_+$$ $$ \overset{\mu_i(v')}{\longleftrightarrow} \cL(w)\cdot w_+ = \cL(w)\cdot w_+.
$$

We focus now in case \eqref{Caso2Eles}, where $e_j(u)$ is a loop and $e_i(u)$ a bichord (Figure \ref{casesLu0Lw2}$(b)$ shows an example of this situation). In this case $F_{v>w}\circ \gamma_v\circ M_{u>v}(C)$ equals $\cL(v)\cdot w_+$, and we get:
$$ \cL(v)\cdot w_+ = \cL(v)\cdot w_+ \overset{\mu_i(u)^{-1}}{\longleftrightarrow} \cO_i(u)\cdot w_+ \overset{h_{u,v'}}{\longleftrightarrow} \cO_i(v')\cdot w_+ = \cO_i(v')\cdot w_+$$ $$\overset{\mu_i(v')}{\longleftrightarrow} \cL(w)\cdot w_+\overset{\rho_{v,w}^{-1}}{\longleftrightarrow} \cL(v)\cdot w_+.
$$

Consider the third case \eqref{Caso3Eles}, where $e_i(u)$ and $e_j(u)$ are parallel bichords, so $\cO_i(u)=\cO_j(u)$ (see Figure \ref{casesLu0Lw2}$(c)$). Then $F_{v>w}\circ \gamma_v\circ M_{u>v}(C) = \cL(v)\cdot w_+$, and we obtain:
$$ \cL(v)\cdot w_+ = \cL(v)\cdot w_+ \overset{\mu_i(u)^{-1}}{\longleftrightarrow} \cO_i(u)\cdot w_+ = \cO_j(u)\cdot w_+ \overset{\mu_j(u)}{\longleftrightarrow} \cL(v')\cdot w_+ = \cL(v')\cdot w_+$$ $$\overset{\rho_{v,w}^{-1}\rho_{v',w}}{\longleftrightarrow} \cL(v)\cdot w_+.
$$

Hence, showing the commutativity of the cube reduces to proving the following equalities in the corresponding situations:
\begin{align*}
\mu_i(v') \circ \mu_i(v')^{-1} \circ \mu_j(v) \circ \mu_j(v)^{-1} &= \Id, \\
\rho_{v,w}^{-1}  \circ \mu_i(v') \circ h_{u,v'}\circ \mu_i(u)^{-1} &= \Id, \\
\rho_{v,w}^{-1}\circ \rho_{v',w} \circ \mu_j(u)\circ \mu_i(u)^{-1} &=\Id.
\end{align*}

The first line holds on the nose, whereas the second and third are a consequence of the commutativity of the following squares, proved in Lemma \ref{lemacuadrados}:
\begin{eqnarray*}
\xymatrix{
\cO_i(u)\ar[r]^{\mu_i(u)}\ar[d]^{h_{u,v'}} & \cL(v) \ar[d]^{\rho_{v,w}} \\
\cO_i(v')\ar[r]^{\mu_i(v')} & \cL(w),
} &
\xymatrix{
\cO_i(u)\ar[r]^{\mu_i(u)}\ar[d]^{\mu_j(u)} & \cL(v) \ar[d]^{\rho_{v,w}} \\
\cL(v')\ar[r]^{\rho_{v',w}} & \cL(w).
}\end{eqnarray*}
\end{proof}


\begin{proposition}\label{propisom} $\gamma_*\colon C_*(M;\bZ)\to C_*(F;\bZ)$ is an isomorphism of chain complexes.
\end{proposition}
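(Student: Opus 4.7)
Since $\gamma$ is a natural transformation of strictly unitary lax $2$-functors $\cube{n}\to\cB$ (Lemma \ref{NatTransWellDefined}) and the functor $C_*(-;\bZ)$ sends natural transformations of Burnside cubes to morphisms of chain complexes (Section \ref{ss:32}), the map $\gamma_*$ is automatically a chain map. So the content of the proposition is that, at each vertex $u\in\cube{n}$, the induced map of free $\bZ$-modules
\[\gamma_u\colon \bZ\langle M(u)\rangle \lra \bZ\langle F(u)\rangle\]
is an isomorphism. The cases $\Phi(u)\geq 1$ are immediate: if $\Phi(u)>1$ both sides vanish, and if $\Phi(u)=1$ both sides equal $\bZ\langle u_+\rangle$ and $\gamma_u$ is the identity on the generator $u_+$. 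The substantive case is $\Phi(u)=0$.

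In that case, by Lemma \ref{alternatingcycles}(1) the graph $G(u)$ is a forest. Both the domain $\bZ\langle \pi_0 G(u)\cup E(u)\rangle$ and the codomain $\bZ\langle Z(u)\rangle$ decompose as direct sums over the connected components of $G(u)$, and $\gamma_u$ preserves this decomposition. Hence it is enough to verify the isomorphism one tree at a time: for each tree component $T$ of $G(u)$ with vertex set $Z_T\subseteq Z(u)$ and edge set $E_T\subseteq E(u)$, the restricted map
\[\bZ\langle\{T\}\cup E_T\rangle \lra \bZ\langle Z_T\rangle,\qquad T\lmt \sum_{z\in Z_T} z,\qquad e\lmt \sum_{z\in e_+}z,\]
is an isomorphism. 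Both sides have the same rank, since $|Z_T|=|E_T|+1$ for a tree.

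To verify invertibility I would first argue that the answer does not depend on the arbitrary orientation fixed on $G(u)$ in Section \ref{SecFunc}. Indeed, flipping the orientation of an edge $e\in E_T$ replaces its column $\mathbf{1}_{e_+}$ by $\mathbf{1}_{Z_T\smallsetminus e_+}=\mathbf{1}_{Z_T}-\mathbf{1}_{e_+}$, which is the column of $T$ minus the old column of $e$; this is an invertible integer column operation, so invertibility is orientation-independent. I may therefore orient all edges of $T$ away from a chosen root $r\in Z_T$, so that $e_+$ is the subtree on the child side of $e$. Ordering the basis of $\bZ\langle Z_T\rangle$ as $r, v_1,\dots,v_{|E_T|}$ in a breadth-first traversal from $r$, and ordering the basis of $\bZ\langle \{T\}\cup E_T\rangle$ as $T, e_1,\dots,e_{|E_T|}$, where $e_i$ is the unique edge from the parent of $v_i$ to $v_i$, the representing matrix is lower triangular with $1$'s on the diagonal: the column of $T$ is all ones, and the column of $e_i$ has a $1$ in the row of $v_i$, zeros in the row of $r$ and in the rows of every $v_j$ with $j<i$ (since a BFS-predecessor of $v_i$ cannot be a descendant of $v_i$), and arbitrary $0/1$ entries below. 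Such a matrix has determinant $1$ and is invertible over $\bZ$, which completes the proof.

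The only subtle point is the orientation-independence step: one needs the column of $T$ (the component itself, encoding $\mathbf{1}_{Z_T}$) in order to reverse orientations by integer column operations, which is exactly why adding $\pi_0 G(u)$ to $E(u)$ yields a basis whose cardinality matches $|Z(u)|$ and whose image under $\gamma_u$ generates $\bZ\langle Z(u)\rangle$. Everything else is bookkeeping on a tree.
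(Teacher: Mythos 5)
Your proof is correct, but it proves invertibility by a different mechanism than the paper. The paper's proof simply exhibits a closed-form inverse: for a circle $z$ in a state $u$ with $\Phi(u)=0$, it sets $\gamma_*^{-1}(z)=\sum_{e\in E_z(u)}(-1)^{\sigma(e,z)}e+\bigl(1-\sum_{e\in E_z(u)}\bar\sigma(e,z)\bigr)C_z$, where $E_z(u)$ is the set of edges incident to $z$, $C_z$ is the component containing $z$, and $\sigma,\bar\sigma$ record whether $z\in e_+$; verifying that this is a two-sided inverse is left to the reader. You instead reduce to one tree component at a time, observe that invertibility is independent of the chosen orientation of $G(u)$ (via the elementary column operation replacing $\mathbf{1}_{e_+}$ by $\mathbf{1}_{Z_T}-\mathbf{1}_{e_+}$), and then root the tree and exhibit a triangular matrix with unit diagonal in a BFS ordering. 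Your route is longer to state but essentially self-verifying, and the orientation-independence observation is a genuine addition (the paper never addresses why the arbitrary orientation of $G(u)$ in Section \ref{SecFunc} is harmless); the paper's route is shorter and has the advantage of producing the inverse explicitly, which could be reused elsewhere. Both arguments rest on the same structural fact, namely that Lemma \ref{alternatingcycles}(1) makes $G(u)$ a forest so that $|\pi_0G(u)|+|E(u)|=|Z(u)|$. Your proof is complete as written.
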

\begin{proof} 
Given a state $u\in \cube{n}$ with $\Phi(u)=0$ and a vertex $z\in Z(u)$ in $G(u)$, write $C_z$ for the component of $G(u)$ containing $z$ and write $E_z(u)$ for the set of edges incident to $z$. For each edge $e\in E_z(u)$, set
\begin{align*}
\sigma(e,z) & = \begin{cases} 0 & \mbox{if } z \in e_+, \\
1 & \mbox{if } z \notin e_+,
\end{cases} & 
\bar{\sigma}(e,z) & = \begin{cases} 1 & \mbox{if } z \in e_+, \\
0 & \mbox{if } z \notin e_+.
\end{cases} 
\end{align*}

The inverse of $\gamma_*$ is:
\begin{align*}
\gamma_*^{-1}(z) &= \sum_{e\in E_z(u)} (-1)^{\sigma(e,z)}e +\left(1-\sum_{e\in E_z(u)}\bar{\sigma}(e,z)\right) C_z,  \\
\gamma_*^{-1}(u_+) &=u_+.  \qedhere
\end{align*} 

\end{proof}

The above proposition together with Remark \ref{remark:spectraequiv} yield to the following result:

\begin{corollary}\label{MequivF} The spectra $\Tot F$ and $\Tot M$ are homotopy equivalent.
\end{corollary}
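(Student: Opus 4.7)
The plan is essentially to chain together the two results immediately preceding the corollary: Proposition \ref{propisom} gives an isomorphism of chain complexes, and Remark \ref{remark:spectraequiv} converts a homology isomorphism between Burnside cubes into a homotopy equivalence of totalizations. So my outline is very short.

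First, I would invoke the natural transformation $\gamma\colon M\to F$ constructed and shown to be well-defined in Lemma \ref{NatTransWellDefined}. In the framework of Section \ref{ss:32}, any natural transformation of Burnside cubes induces a map of chain complexes $\gamma_*\colon C_*(M;\bZ)\to C_*(F;\bZ)$, since the composite $\cB\to \Mod{\bZ}\to \Ch(\bZ)$ sends $2$-morphisms to identities and the totalization is functorial in cubes of chain complexes.

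Next, by Proposition \ref{propisom} this induced map $\gamma_*$ is an \emph{isomorphism} of chain complexes, and in particular a homology isomorphism with $\bZ$-coefficients. Since the construction of the spectrum $\Tot$ in \cite{LLS15} is functorial on natural transformations up to homotopy, the natural transformation $\gamma$ yields a map of spectra $\Tot\gamma\colon \Tot M\to \Tot F$ whose induced map on homology agrees with $\gamma_*$.

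Finally, I apply Remark \ref{remark:spectraequiv}: whenever $C_*(\gamma;\bZ)$ is a homology isomorphism the associated map $\Tot\gamma$ is a homotopy equivalence of spectra. Hence $\Tot M\simeq \Tot F$, which is the statement of the corollary. There is no real obstacle here — the whole content of the corollary is contained in Proposition \ref{propisom} together with the general principle of Remark \ref{remark:spectraequiv}; the only point worth flagging explicitly in the write-up is that an isomorphism of chain complexes is \emph{a fortiori} a homology isomorphism, so the hypothesis of Remark \ref{remark:spectraequiv} is satisfied.
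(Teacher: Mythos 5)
Your proposal is correct and follows exactly the paper's argument: the corollary is deduced by combining Proposition \ref{propisom} (the induced map $\gamma_*$ is an isomorphism, hence a homology isomorphism) with Remark \ref{remark:spectraequiv}. Nothing further is needed.
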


\section{Pointed semi-simplicial sets}

\begin{df}
A link diagram $D$ is \emph{$1$-adequate}  (resp. \emph{$0$-adequate}) if $\mathcal{G}(\uno)$ (resp. $\mathcal{G}(\cero)$) contains no loops. $D$ is said to be \emph{adequate} if it is both $0$-adequate and $1$-adequate. If $D$ is either $0$-adequate or $1$-adequate, it is called semiadequate. A link is said to be (semi)adequate if it admits a (semi)adequate diagram. 
\end{df}

\begin{proposition}\label{prop:3} The functor $F$ factors through $\Setp$ if and only if the diagram $D$ is $1$-adequate. The functor $M$ factors through $\Setp$ if and only if $D(\uno)$ contains no alternatig pairs.
\end{proposition}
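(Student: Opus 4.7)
The plan is to use the equivalence, established in Section 2, that $\Setp \hookrightarrow \cB$ has essential image the $2$-subcategory of free spans, so a strictly unitary lax $2$-functor $\cube{n}\to\cB$ factors through $\Setp$ precisely when every $1$-morphism is a free span, i.e.\ the source map is injective. The proof then reduces to inspecting the source fibers of the spans appearing in the definitions of $F$ and $M$.

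For $F$: going through the cases, $F_{u>v}$ is free except when $\Phi(u)=0$ and $u\succ_i v$ is a splitting, in which case $e_i(u)$ is a monochord at some circle $z_0 \in Z(u)$ that splits into $\cO_i(v)=\{z_1,z_2\}$, so $F_{u>v}(z_0)=z_1+z_2$ has source fiber of size $2$. Hence $F$ factors through $\Setp$ iff no state $u$ with $\Phi(u)=0$ admits a $1$-monochord in $D(u)$. The forward direction is immediate by taking $u=\uno$. Conversely, if $D$ is $1$-adequate and $\Phi(u)=0$ but $e_i(u)$ were a monochord, then since $e_i(\uno)$ is a bichord, along any chain $\uno \to u$ the two circles containing the endpoints of $e_i$ would have to merge at some intermediate step; that step is a merging contributing $+1$ to $\Phi$, forcing $\Phi(u)\geq 1$, a contradiction.

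For $M$: a parallel case analysis shows that the source map of $M_{u>v}$ is injective except when $\Phi(u)=0$, $\Phi(v)=1$, and $|\cL(v)|=2$, because then the unique component $C_0\in \pi_0 G(u)$ meeting $\cO_i(u)$ satisfies $M_{u>v}(C_0)=\cL(v)\cdot v_+$ (by Remark \ref{rem1}), yielding a source fiber of size $2$ at $C_0$. Therefore $M$ factors through $\Setp$ iff $\flat{v}<\flat{\uno}$ for every $v$ with $\Phi(v)=1$, and the claim is that this is equivalent to $D(\uno)$ containing no alternating pair. If such a pair $\{e_j,e_k\}$ exists on a circle $C$, let $v$ have $v_j=v_k=0$ and $v_l=1$ otherwise: first flipping $e_j$ (a splitting of $C$, preserving $|\pi_0 G|$ and keeping $\Phi=0$) and then $e_k$ (now a bichord between the two subcircles of $C$ already connected in $G$ by the edge $e_j$, so the merging also preserves $|\pi_0 G|$ while lifting $\Phi$ to $1$) yields $\Phi(v)=1$ and $\flat{v}=\flat{\uno}$, so $|\cL(v)|=2$. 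Conversely, if $D(\uno)$ has no alternating pair and $\Phi(v)=1$, then by Proposition \ref{prop:char} the diagram $D(\uno)_v$ contains a bichord or an alternating pair; the latter would be an alternating pair in $D(\uno)$, since every chord of $D(\uno)_v$ is a chord of $D(\uno)$, so the former must hold, with no alternating pair and no forbidden configuration present (using the other half of Proposition \ref{prop:char} with $\Phi(v)\leq 1$). Choosing a chain that flips all non-bichord chords of $D(\uno)_v$ first as splittings (preserving $|\pi_0 G|$) and then flipping the bichord, the merging connects two circles whose $D(\uno)$-ancestors lie in distinct components of $G(\uno)$; hence $\flat{v}=\flat{\uno}-1$ and $|\cL(v)|=1$.

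The main obstacle is isolating, in the definition of $M_{u>v}$, exactly the one source fiber (over the component $C_0$) that can exceed size one, and then translating the numerical condition $\flat{v}<\flat{\uno}$ into the chord-diagrammatic statement via Proposition \ref{prop:char}. The key observation is that $D(\uno)_v$ records precisely which configurations from $D(\uno)$ are visible when passing to $v$, so the absence of alternating pairs in $D(\uno)$ forces the bichord case of the dichotomy in Proposition \ref{prop:char}.
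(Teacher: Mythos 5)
Your proof is correct and follows essentially the same route as the paper: identify exactly which spans $F_{u>v}$ and $M_{u>v}$ fail to be free (the splitting with $\Phi(u)=\Phi(v)=0$ for $F$; the case $\Phi(u)=0$, $\Phi(v)=1$, $|\cL(v)|=2$ for $M$) and translate those conditions into statements about $D(\uno)$. The paper's own proof is in fact terser and simply asserts these equivalences, whereas you supply the supporting details (the merging argument for $1$-adequacy and the use of Proposition \ref{prop:char} plus the chain reordering to show $\flat{v}=\flat{\uno}-1$ in the bichord case), all of which check out.
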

\begin{proof} 
From the definition of $F$ it follows that $F_{u>v}$ is not a function of pointed sets if and only if $\Phi(u)=\Phi(v)=0$. This situation is avoided for all possible states $u > v$ if and only if $\Phi(v) > 0$ for every state $v \neq \uno$, or equivalently, if and only if $\mathcal{G}(\uno)$ contains no loops. 

Now, looking at the definition of $M$, we get that $M_{u>v}$ is not a function of pointed sets if and only if $\Phi(u)=0$, $\Phi(v)=1$ and $|\cL(v)|=2$. This situation is avoided for all possible states $u>v$ if and only if $D(\uno)$ contains no alternating pairs.
\end{proof}

Combining the functor $\Lambda$ from \eqref{Lambda} with the definition in \eqref{eq:F} we get the following results:

\begin{corollary}\label{corPS} If $D$ is 1-adequate, then $\cX_D^\jalmax$ is an iterated desuspension of the augmented semisimplicial pointed set $\Lambda(F)$ (cf. \cite{PS}).
\end{corollary}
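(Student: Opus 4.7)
The plan is to string together three facts already established in the paper: the criterion of Proposition \ref{prop:3} telling us when $F$ lifts to pointed sets, Proposition \ref{prop:setp} identifying the totalisation of such a lift with the relative realisation of the associated augmented semi-simplicial pointed set, and the definition \eqref{eq:F} of the Khovanov spectrum as an $n_-$-fold desuspension of $\Tot F_D$.

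First, assuming $D$ is $1$-adequate, Proposition \ref{prop:3} guarantees that the Khovanov functor $F=F_D\colon \cube{n}\to \cB$ factors through the inclusion $\Setp\hookrightarrow \cB$. I would fix once and for all a lift $\tilde{F}\colon \cube{n}\to \Setp$ and apply the functor $\Lambda$ from \eqref{Lambda} to obtain an augmented semi-simplicial pointed set $\Lambda(\tilde{F})\colon \Deltainjaug^\op\to \Setp$; this is the object $\Lambda(F)$ mentioned in the statement.

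Next, I would invoke Proposition \ref{prop:setp}: since $F$ factors through $\Setp$, one has $\Tot F\simeq \Sigma^\infty \|\Lambda(\tilde{F})\|$. Combined with the definition $\cX_D^{\jalmax}=\Sigma^{-n_-}\Sigma^\infty \Tot F_D$ from \eqref{eq:F}, this yields
\[\cX_D^{\jalmax}\simeq \Sigma^{-n_-}\Sigma^\infty \|\Lambda(\tilde{F})\|,\]
which is exactly the claim that $\cX_D^{\jalmax}$ is an (iterated) desuspension of the suspension spectrum of the relative realisation of the augmented semi-simplicial pointed set $\Lambda(F)$.

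There is no real obstacle: the statement is a direct corollary of Proposition \ref{prop:3} together with Proposition \ref{prop:setp} and the defining formula \eqref{eq:F}. The only point requiring mild care is the bookkeeping of suspension and desuspension factors, and the slight abuse of notation by which $\Lambda(F)$ in the statement refers to $\Lambda(\tilde{F})$, the augmented semi-simplicial pointed set produced from the factorisation of $F$ through $\Setp$.
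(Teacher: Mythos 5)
Your proposal is correct and follows exactly the route the paper intends: the corollary is stated as an immediate consequence of Proposition \ref{prop:3} (the $1$-adequacy criterion for $F$ factoring through $\Setp$), Proposition \ref{prop:setp}, and the defining formula \eqref{eq:F}. The bookkeeping of the $\Sigma^{-n_-}$ desuspension and the identification of $\Lambda(F)$ with $\Lambda(\tilde{F})$ are handled correctly.
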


\begin{corollary} If $D(\uno)$ has no alternating pairs, then $\cX_D^\jalmax$ is an iterated desuspension of the augmented semisimplicial pointed set $\Lambda(M)$.
\end{corollary}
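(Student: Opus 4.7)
The plan is to deduce this corollary as an immediate consequence of three prior results, exactly paralleling the derivation of Corollary \ref{corPS} but applied to the new functor $M$ instead of $F$.

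First, I would invoke Proposition \ref{prop:3}: the hypothesis that $D(\uno)$ contains no alternating pairs is precisely the condition ensuring that the strictly unitary lax $2$-functor $M \colon \cube{n} \to \cB$ factors through the inclusion $\Setp \hookrightarrow \cB$. Let $\tilde{M} \colon \cube{n} \to \Setp$ denote the resulting lift. Then Proposition \ref{prop:setp} applies verbatim to $\tilde{M}$, yielding the equivalence of spectra
\[
\Tot M \;\simeq\; \Sigma^\infty \|\Lambda(\tilde{M})\|,
\]
where $\Lambda$ is the left Kan extension functor from \eqref{Lambda} sending a cube of pointed sets to an augmented pointed semi-simplicial set.

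Second, Theorem \ref{thm:a}, realized as Corollary \ref{MequivF}, provides a homotopy equivalence $\Tot F \simeq \Tot M$ between the two Burnside cubes. Combining this with the definition \eqref{eq:F} of the almost-extreme Khovanov spectrum, $\cX_D^\jalmax = \Sigma^{-n_-}\Tot F_D$, we conclude
\[
\cX_D^\jalmax \;\simeq\; \Sigma^{-n_-}\Tot M \;\simeq\; \Sigma^{-n_-}\Sigma^\infty \|\Lambda(M)\|,
\]
which exhibits $\cX_D^\jalmax$ as the asserted iterated desuspension of the augmented pointed semi-simplicial set $\Lambda(M)$ (by a mild abuse of notation writing $\Lambda(M)$ for $\Lambda(\tilde{M})$ once the factorization through $\Setp$ has been fixed).

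There is no real obstacle at this stage of the paper: the substantive work has already been carried out, namely the construction of $M$ (Section 5), the verification that it defines a strictly unitary lax $2$-functor (Lemma \ref{Mwelldefined}), the construction of the natural transformation $\gamma\colon M\to F$ and the proof that it induces an isomorphism of chain complexes (Proposition \ref{propisom}, which gives Corollary \ref{MequivF} via Remark \ref{remark:spectraequiv}), and the characterization of when $M$ factors through $\Setp$ (Proposition \ref{prop:3}). The corollary is simply the packaging of these ingredients into the statement that extends Corollary \ref{corPS} from the $1$-adequate case to the strictly broader class of diagrams without alternating pairs.
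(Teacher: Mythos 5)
Your proposal is correct and follows exactly the route the paper intends: Proposition \ref{prop:3} supplies the lift of $M$ through $\Setp$ under the no-alternating-pairs hypothesis, Proposition \ref{prop:setp} identifies $\Tot M$ with $\Sigma^\infty\|\Lambda(\tilde{M})\|$, and Corollary \ref{MequivF} together with \eqref{eq:F} transports this to $\cX_D^{\jalmax}$. This is precisely the paper's (unwritten) argument, so nothing further is needed.
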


\section{Breaking up $M_D$}

In this section we introduce skein sequences for $M_D$ (Section \ref{skein}). To do so, we decompose $M_D$ into subposets of the cube $\cube{n}$ (Section \ref{section:decomposition}), and compare one of them with the simplicial complex $I_D$ introduced in \cite{GMS} for extreme Khovanov homology. 

\subsection{The simplicial complex $I_D$} Given a link diagram $D$, in \cite{GMS} authors introduce a simplicial complex whose associated cohomology complex coincides with the Khovanov homology of the link in the minimal quantum grading. We restate that construction in terms of the maximal quantum grading $j_{\max}$. 

\begin{df}\label{DefLando}
Let $D$ be a link diagram. Its associated Lando graph $G_L(D)$ is constructed from $D(\uno)$ by considering a vertex for every monochord, and an edge joining two vertices if the endpoints of the corresponding monochords alternate along the same circle. We define the independence complex\footnote{The independence complex is defined in terms of a graph. Hence, given a graph $G$ we define its associated independence complex $I_G$ as described in Definition \ref{DefLando}.} associated to $D$, $I_D$, as the simplicial complex whose set of vertices is the same as the set of vertices of $G_L(D)$ and $\sigma=(e_1 e_2 \cdots e_k)$ is a simplex in $I_D$ if and only if the vertices $e_1, e_2, \ldots, e_k$ are independent in $G_L(D)$, i.e., there are not edges in $G_L(D)$ between these vertices. 
\end{df}

Note that $G_L(D)$ can be thought as the disjoint union of the Lando graphs arising from each of the chord diagrams in $D(\uno)$ after removing all bichords. Moreover, if $G_L(D) = G_L(D_1) \sqcup G_L(D_2)$, then $I_D = I_{D_1} * I_{D_2}$, the join of both simplicial complexes. 

\begin{theorem}\cite{GMS}
Let $L$ be an oriented link represented by a diagram $D$ with $p$ positive crossings. Then $$Kh^{i,j_{\max}}(D) \approx \tilde{H}_{p-i-1}(I_D).$$
\end{theorem}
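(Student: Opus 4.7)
The plan is to exhibit an isomorphism, up to a degree shift and a sign adjustment, between the Khovanov cochain complex $C^{*,j_{\max}}(D)$ and the augmented reduced simplicial chain complex of $I_D$; the theorem will then follow on passing to (co)homology.

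For the identification of generators, by Proposition \ref{valueL} specialized to $j=j_{\max}$, the only enhanced states $(u,x)$ with $q(u,x)=j_{\max}$ satisfy $\Phi(u)=0$ and $x=x_+$; the homological constraint $h(u,x_+)=i$ forces $|u|=i+n_-$, so the set $S(u):=\{k:u_k=0\}$ has cardinality $p-i$. By Proposition \ref{prop:char}, $\Phi(u)=0$ is equivalent to $D(\uno)_u$ having neither bichords nor alternating pairs, which in turn says that every $k\in S(u)$ indexes a monochord of $D(\uno)$ and no two of those monochords alternate along a common circle. This is precisely the condition that $S(u)$ be an independent set in the Lando graph $G_L(D)$, i.e., a simplex of $I_D$ of dimension $|S(u)|-1=p-i-1$. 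In particular, $\uno$ corresponds to the empty simplex, playing the role of the augmentation generator in $\tilde{C}_{-1}(I_D)$.

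To match the differentials, observe that an edge $u\succ_k v$ of the cube contributes to the Khovanov differential between extreme-grading generators only when $\Phi(u)=\Phi(v)=0$. Since a merging from $u$ to $v$ increases $\Phi$ by one, this forces $u\succ_k v$ to be a splitting in the paper's sense; equivalently, $e_k(u)$ is a monochord in $D(u)$. In the cohomological direction $v\to u$, the saddle therefore merges the two circles comprising $\mathcal{O}_k(v)$, both of which carry the label $+$ in $x_+$; the Frobenius multiplication $m(v_+\otimes v_+)=v_+$ yields coefficient $\pm 1$, sending $(v,x_+)$ to $\pm(u,x_+)$. Under the bijection $u\leftrightarrow S(u)$, the set $S(u)=S(v)\smallsetminus\{k\}$ is a codimension-one face of $S(v)$, so summing over $k$ recovers the simplicial boundary of $I_D$, face by face.

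The only delicate point is the matching of signs. Writing $S(v)=\{j_0<\cdots<j_p\}$ and $k=j_m$, the Khovanov edge sign $(-1)^{\sum_{l<k}u_l}$ equals $(-1)^{(k-1)-m}$, whereas the simplicial face sign is $(-1)^m$; they differ by a factor $(-1)^{k-1}$ depending only on the absolute position of $k$. Rescaling each generator $(u,x_+)$ by $(-1)^{\sum_{k\in S(u)}(k-1)}$ absorbs the discrepancy and turns the Khovanov cochain differential into the simplicial boundary, yielding an isomorphism of augmented chain complexes and hence the theorem. This sign verification is the principal obstacle; the remainder of the argument is a direct combinatorial unpacking of Proposition \ref{prop:char}. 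In the spirit of the present paper, the same result can be phrased functorially: the Lipshitz--Sarkar--Lawson functor $F^{j_{\max}}_D$ takes values only in singletons and the empty set, hence factors through $\Setp$, and the induced simplicial complex (via $\Set\hookrightarrow\cB$) is exactly $I_D$.
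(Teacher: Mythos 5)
Your proof is correct. Note that the paper itself offers no argument for this statement --- it is imported from \cite{GMS} as a black box --- so there is nothing internal to compare against; what you have written is essentially the standard proof, correctly reconstructed from the paper's own machinery. The three pillars all check out: Proposition \ref{valueL} at $j=j_{\max}$ forces $|Z_-(u,x)|=-\Phi(u)$, hence $\Phi(u)=0$ and $x=x_+$; the first part of Proposition \ref{prop:char} identifies $\{\Phi(u)=0\}$ with the independent sets of monochords, i.e.\ with the faces of $I_D$ (the paper itself records this identification of posets in Section 7, again without proof); and since a merging raises $\Phi$, the only surviving components of the differential are multiplications $m(v_+\otimes v_+)=v_+$ along splitting edges, which transcribe to codimension-one face maps. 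Your sign bookkeeping is also right: with $k=j_m$ the Khovanov exponent $\sum_{l<k}u_l$ equals $(k-1)-m$, so the ratio to the simplicial sign $(-1)^m$ is $(-1)^{k-1}$, a coboundary of the function $u\mapsto\sum_{k\in S(u)}(k-1)$, and your rescaling absorbs it. Two cosmetic points: the ordered set $S(v)$ should be written $\{j_0<\cdots<j_{p-i-1}\}$ rather than $\{j_0<\cdots<j_p\}$; and it is worth stating explicitly that the empty simplex generator $\tilde{C}_{-1}(I_D)$ corresponds to $u=\uno$ in degree $i=p$, so the isomorphism really is with \emph{reduced} (augmented) homology, as the statement requires. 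Your closing remark that $F^{j_{\max}}_D$ factors through $\mathbf{S}_{\mathrm{p}}$ and recovers $I_D$ up to categorical duality is exactly the point of view the paper adopts in Corollary \ref{corID=XD}.
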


The poset of faces of the independence complex $I_D$ is precisely the subposet of the cube $\cube{n}$ of those states $u$ for which $\Phi(u)=0$. 

Given a set of vertices $\{v_1,\ldots,v_n\}$, each simplicial complex on these vertices gives rise to a downwards closed subposet of $\cube{n}$ (i.e., if a state $u$ belongs to the subposet and $u > v$, then $v$ belongs to the subposet too): its poset of faces. Conversely, every downwards closed subposet of $\cube{n}$ is the poset of faces of some simplicial complex.

Let $\mathbf{S}\subset \Set$ be the full subcategory on $\emptyset$ and a singleton, and let $\mathbf{S_p}\subset \Setp$ be the full subcategory on the basepoint and $S^0$. Downwards closed subposets of the cube are in bijection with functors $\cube{n}\to \mathbf{S}$. Subposets of the cube are in bijection with functors $\cube{n}\to \mathbf{S_p}$ (see discussion in the final section of \cite{CS}, for example). The realization of a subposet of the cube is the desuspension of the totalization of its associated functor $\cube{n}\to \mathbf{S_p}\subset \Setp$. Thus, the realization of a simplicial complex $X$ coincides with the realization of its poset of faces.

Given $u\in \cube{n}$, we write $\bar{u}$ for the state of $\cube{n}$ satisfying $\bar{u}_i \neq u_i$, for $1\leq i \leq n$.

\begin{df} The \emph{categorical dual} of a downwards (upwards) closed subposet $X\subset \cube{n}$ is the upwards (downwards) closed subposet $X^*\subset \cube{n}\to \mathbf{S_p}$ given by $u\in X^*$ if and only if $\bar{u}\in X$.

The \emph{complement} of a downwards (upwards) closed subposet $X\subset \cube{n}$ is the upwards (downwards) closed subposet $\hat{X}\subset \cube{n}$ given by $u\in \hat{X}$ if and only if $u\notin X$.
\end{df}

The complement of the categorical dual of a downwards closed set $X$ is again a downwards closed subposet of the cube whose associated simplicial complex is the \emph{Alexander dual} of $X$. In general $|\hat{X}|\simeq \Sigma|X|$ if $X$ is downwards closed and $|X^*|$ is Spanier-Whitehead dual to $|X|$. Observe that
\begin{enumerate}
\item If $|X|\simeq |A|\vee |B|$, then $|X^*|\simeq |A^*|\vee|B^*|$,
\item If $|X|\simeq S^k$, then $|X^*|\simeq S^{n-k-2}$, where $n$ is the dimension of the cube. 
\end{enumerate}

Given a link diagram $D$, the upwards closed subposet of the cube given by those states $u$ such that $\Phi(u)=0$, which we denote $X_D$, corresponds to the functor $F^{j_{\max}}\colon \cube{n}\to \Setp \subset \cB$. 

\begin{corollary}\label{corID=XD}
$X_D$ is the categorical dual of the poset of faces of $I_D$. As a consequence if  \, $|I_D| \simeq S^k$, then $|X_D| \simeq S^{n-k-2}$.
\end{corollary}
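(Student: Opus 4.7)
The plan is to unwind the definitions of $X_D$ and of the poset of faces of $I_D$ and then apply Proposition \ref{prop:char} to match them up coordinate-by-coordinate.

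First I would identify the poset of faces of $I_D$ as a downwards closed subposet of $\cube{n}$: a face $\sigma \subset \{\text{monochords of }D(\uno)\}$ corresponds to the state $v \in \cube{n}$ with $v_i = 1$ iff $c_i \in \sigma$, and this indeed gives a downwards closed subposet because $\sigma' \subset \sigma$ corresponds to $v' < v$. Its categorical dual is then, by definition, the upwards closed subposet consisting of those $u$ such that $\bar u$ encodes an independent subset of monochords of $D(\uno)$, i.e.\ such that every $c_i$ with $\bar u_i = 1$ is a monochord of $D(\uno)$ and no two of these monochords form an alternating pair.

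The key step is then to verify that this last condition is exactly $\Phi(u) = 0$. Since $\bar u_i = 1 \Leftrightarrow u_i = 0$, the chord diagram $D(\uno)_u$ is precisely the subdiagram of $D(\uno)$ consisting of the chords $e_i(\uno)$ with $\bar u_i = 1$. Hence "$\bar u$ is a face of $I_D$" translates into "$D(\uno)_u$ contains no bichord and no alternating pair", which by Proposition \ref{prop:char} is equivalent to $\Phi(u) = 0$, i.e.\ to $u \in X_D$. This gives the first assertion: $X_D$ equals the categorical dual of the poset of faces of $I_D$.

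For the consequence, I would simply invoke the two facts recalled earlier in the section: the realization of a simplicial complex coincides with the realization of its poset of faces, and taking categorical duals of subposets of $\cube{n}$ turns a $k$-sphere into an $(n-k-2)$-sphere. Thus if $|I_D| \simeq S^k$, then $|X_D| \simeq S^{n-k-2}$. There is no real obstacle here; the whole proof is a bookkeeping exercise, and the only point requiring care is to keep straight the bar convention $\bar u_i = 1 \Leftrightarrow u_i = 0$ when translating between faces of $I_D$ and states in the cube.
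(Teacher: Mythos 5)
Your proof is correct and follows exactly the route the paper intends: identify the poset of faces of $I_D$ with the downwards closed subposet of $\cube{n}$ encoding independent sets of monochords, observe that $D(\uno)_u$ consists precisely of the chords $e_i(\uno)$ with $\bar u_i=1$ so that Proposition \ref{prop:char} translates ``$\bar u$ is a face of $I_D$'' into $\Phi(u)=0$, and then invoke the stated facts that realization commutes with passing to the poset of faces and that categorical duality sends $S^k$ to $S^{n-k-2}$. The paper leaves all of this implicit in the surrounding discussion, so there is nothing to add.
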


\subsection{Decomposing $M_D$}\label{section:decomposition}
Given a state $u$ of a diagram $D$, recall that we write $D(\uno)_u$ for the chord diagram having the same circles as $D(\uno)$ and those chords $e_i(\uno)$ so that $u_i \neq 1$. Given a crossing $c$ in $D$, we write $D[c=0]$ (resp. $D[c=1]$) for the link diagram obtained after smoothing $c$ following a $0$-label (resp. $1$-label). 

\begin{df}
Given a link diagram $D$ with $n$ crossings, we consider the following subposets of the cube $\cube{n}$:
\begin{itemize}
\item $X_D$: is the subposet of $\cube{n}$ consisting of those states $u$ so that $\Phi(u)=0$ (as defined in previous section). 
\item $X_D^e= X_{D[e=0]}$, for any monochord $e \in D(\uno)$. 
\item $Y_D$: is the subposet of $\cube{n}$ consisting of those states $u$ so that $\Phi(u)=1$ and $D(\uno)_u$ contains an alternating pair. 
\item $Z_D^b$:  is the subposet of $\cube{n}$ consisting of those states $u$ so that $\Phi(u)=1$ and $D(\uno)_u$ contains a bichord parallel to a given bichord $b$ of $D(\uno)$.
\end{itemize}
\end{df}

Observe that each state (admitting an enhacement) in the almost-extreme complex of $D$ belongs to one and only one of the previous subcubes $X_D, Y_D$ or $Z_D$ (see Corollary \ref{corollaryL} and Proposition \ref{prop:char}). 

\begin{proposition}\label{propcofibreseq}
The functor $M_D$ can be realized as the following cofibre sequence
\begin{equation}\label{eq:pushout}
\displaystyle\Sigma^{-1}\left(\bigvee_{Z(\uno)}X_D\vee \bigvee_{e\in N} X_D^e\right) \lra
\displaystyle Y_D\vee \bigvee_{[b]\in B} Z^b_D\lra M_D,
\end{equation}
where $N$ is the set of monochords in $G(\uno)$ and $B$ the set of classes of parallel bichords.
\end{proposition}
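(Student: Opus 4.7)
My approach is to construct an inclusion of pointed-set functors $Y_D \vee \bigvee_{[b] \in B} Z_D^b \hookrightarrow M_D$, identify the resulting quotient with $\bigvee_{z \in Z(\uno)} X_D \vee \bigvee_{e \in N} X_D^e$, and deduce the stated cofibre sequence after rotation.

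First I would stratify $\cube{n}$ by the value of $\Phi$. Using Corollary \ref{corollaryL} and Proposition \ref{prop:char}: on the stratum $\{\Phi = 0\}$, $M_D(u) = \pi_0 G(u) \sqcup E(u)$ with $E(u) \subseteq N$, since the presence of any bichord in $D(\uno)_u$ would force $\Phi(u) > 0$; in particular $E(u) = \{e \in N : u_e = 0\}$. On $\{\Phi = 1\}$, $M_D(u) = \{u_+\}$ is a singleton and $u$ lies in exactly one of $Y_D$ or $Z_D^b$ for a unique class $[b]$, since an alternating pair together with a bichord, or two non-parallel bichords, would force $\Phi(u) > 1$. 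On $\{\Phi > 1\}$, $M_D(u) = \emptyset$. Consequently the target wedge $Y_D \vee \bigvee Z_D^b$ has no overlaps on the $\Phi = 1$ stratum.

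Next I would verify that the natural map sending, for each $u \in Y_D \cup \bigcup_{[b]} Z_D^b$, the non-basepoint of the corresponding wedge summand to $u_+ \in M_D(u)$ is a natural transformation; the decisive point is that $Y_D$ and each $Z_D^b$ are closed under descent through $\{\Phi = 1\}$, because if $v < u$ then $D(\uno)_v \supseteq D(\uno)_u$ preserves the relevant alternating pair or parallel bichord. Forming the quotient at the level of pointed sets yields a functor whose value at $u$ with $\Phi(u) = 0$ is $\pi_0 G(u) \sqcup E(u)$, and which is trivial otherwise, since every map $M_{u>v}$ with $\Phi(u) = 0$ and $\Phi(v) = 1$ lands in the fibre over $v_+$ and is killed in the quotient.

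I would then identify this quotient with $\bigvee_{Z(\uno)} X_D \vee \bigvee_N X_D^e$ by sending, at each $u \in X_D$, the non-basepoint of the $z$-th $X_D$-summand to the connected component of $\pi_0 G(u)$ descending from $z$ — well-defined because $f_{u,v}$ is a bijection of $\pi_0$ along any chain inside $X_D$, yielding a canonical $Z(\uno)$-labelling of the components — and the non-basepoint of the $e$-th $X_D^e$-summand (present exactly when $u_e = 0$) to $e \in E(u)$. Naturality reduces to $f_{u,v}$ being a bijection within $X_D$, $g_{u,v}$ being the inclusion on edges, and both sides collapsing to basepoint for any descent into $\{\Phi \geq 1\}$. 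The resulting short exact sequence of functors realizes, via \cite{LLS15} (compare Remark \ref{remark:spectraequiv}), as a cofibre sequence of totalizations, which rotates to the stated sequence. I expect the main obstacle to be the bookkeeping in this identification — in particular, checking that each newly appearing edge when one flips the $e$-th coordinate corresponds to the first appearance of the non-basepoint in the $e$-th $X_D^e$-summand — but this is mechanical once Proposition \ref{prop:char} restricts attention to $E(u) \subseteq N$ and Lemma \ref{alternatingcycles} supplies the forest structure of $G(u)$.
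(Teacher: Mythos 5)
Your proposal is correct and takes essentially the same route as the paper, whose entire proof is the one-line observation that the second map is a levelwise injection whose quotient functor is the suspension of the left-hand term; you simply flesh out the levelwise identifications (the partition of the $\Phi=1$ stratum into $Y_D$ and the $Z_D^b$, and the labelling of the $\Phi=0$ stratum by $Z(\uno)$-components and monochord edges) and then rotate the cofibre sequence.
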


The decomposition follows because the second map is levelwise injective and the suspension of the leftmost term is precisely the quotient functor of this second map.

\begin{remark}\label{remarkMayerVietoris}
A Mayer-Vietoris spectral sequence allows to compute both posets $Y_D$ and $Z_D^b$ in terms of some posets $X_{D'}$, where $D'$ are partially smoothed link diagrams of $D$. We explain this below. 
\end{remark}

The poset $Y_D$ can be covered with the subposets $X_{D[e=0,f=0]}$ where $\{e,f\}$ runs along the set of all alternating pairs in $D(\uno)$. The $k$-fold intersections $A_k$ of this covering are 
\[A_k = \bigvee_{\{e_1,\ldots,e_{k+1}\}} X_{D[e_1=0,\ldots,e_{k+1}=0]},\]
where $\{e_1,\ldots,e_{k+1}\}$ are monochords attached to the same circle in such a way that they can be divided into two non-empty sets $H_1$ and $H_2$ so that all monochords in $H_1$ alternate with all monochords in $H_2$ and viceversa. Similarly, the poset $Z_D^b$ can be covered with the subposets $X_{D[b=0]}$ where $b$ runs along the set of bichords of $D(\uno)$, and the $k$-fold intersections $B_k$ of this covering are
\[B_k = \bigvee_{\{b_1,\ldots,b_k\}}X_{D[b_1=0,\ldots,b_k=0]},\] 
where $\{b_1,\ldots,b_k\}$ are bichords parallel to $b$. 

Thus, the (first page of the) Mayer-Vietoris spectral sequences associated to these coverings are: 
\begin{align*}
H^p\left(\bigvee_{\{e_1,\ldots,e_{q+1}\}}X_{D[e_1=0,\ldots,e_{q+1}=0]}\right) &\Rightarrow H^{p+q}(Y_D),\\
H^p\left(\bigvee_{\{b_1,\ldots,b_q\}}X_{D[b_1=0,\ldots,b_q=0]}\right) &\Rightarrow H^{p+q}(Z^b_D).
\end{align*}

\begin{example}
Consider the torus knot $T(3,q)$ and let $D=D_{(3,q)}$ be its standard diagram. Since $D(\uno)$ contains no bichords, $Z_D^b$ is trivial. We will combine Remark \ref{remarkMayerVietoris} together with Corollary \ref{corID=XD} to compute the realization of  subposets in \eqref{eq:pushout}. More precisely, we will express $X_D$, $X_D^e$ and $Y_D$ (via $A_k$) as the duals of independence complexes of some Lando graphs. Write $C_{n}$ for the cycle graph of $n$ vertices and $L_n$ the path of length $n$.

\begin{itemize}
\item[-] $G_L(D) = C_{2q}$;
\item[-] $G_L(D[e=0]) = L_{2q-4}$, for any monochord $e$ in $D(\uno)$;
\item[-] $G_L(D[e_1=0,e_2=0]) = C_{2q-2}$ when $e_1$ and $e_2$ are two consecutive monochords in $D(\uno)$. Note that alternating pairs consist precisely in consecutive monochords;
\item[-] $G_L(D[e_1=0,e_2=0,e_3=0]) = L_{2q-6}$ when $e_1,e_2,e_3$ are three consecutive monochords in $D(\uno)$. Note that the previous condition is equivalent to require that $e_1,e_2,e_3$ can be divided into two subsets $H_1$ and $H_2$ such that all monochords in each subset alternate with all monochords in the other subset. 
\end{itemize}
Note that there are no $n$-tuples of monochords $e_1,\ldots,e_n$ which can be divided into two subsets $H_1$ and $H_2$ satisfying the condition above when $n>3$. 

Summarizing we have:
\begin{align*}
X_D &=I^*_{C_{2q}} &
X_D^e &= I^*_{L_{2q-4}}&
A_k &= \begin{cases}
\bigvee_{2q} I^*_{C_{2q-2}} & \text{if $k=1$},\\
\bigvee_{2q} I^*_{L_{2q-6}} & \text{if $k=2$}, \\
\emptyset & \text{if $k>2$.}
\end{cases}
\end{align*}

The homotopy types of the above complexes were computed in \cite[Corollary 3.4 and Proposition 3.9]{JACO}:
\begin{align*}
|I_{C_{n}}| &\simeq \begin{cases}
S^{k-1} &\text{if $n=k\pm 1$},\\
S^{k-1}\vee S^{k-1} &\text{if $n=3k$}.
\end{cases}
&
|I_{L_{n}}| &\simeq \begin{cases}
S^{k} &\text{if $n=3k+1,3k+2$}, \\
* &\text{if $n=3k$}.
\end{cases}
\end{align*}

This allows to compute the almost-extreme Khovanov spectra of $T(3,q)$ in terms of well-known independence complexes of cycles and paths, with no need to apply induction on $q$.
\end{example}

\begin{remark}\label{remark:cone} The cone-length $c(\cX)$ of a spectrum $\cX$ is the least $n$ such that there is a sequence of cofibre sequences 
$$\cY_i\to \cX_i\to \cX_{i+1}$$
for $i=0,\ldots, n-1$, such that $\cX_0$ is contractible, $\cY_i$ is a wedge of spheres and $\cX_n = \cX$. 
In \cite{JACO} it was conjectured that $|X_D|$ is homotopy equivalent to a wedge of spheres for any diagram $D$. If this were true, then Remark \ref{remarkMayerVietoris} would imply that the cone length of $\Tot M_D$ is bounded above by the maximum of the following numbers:
\begin{enumerate}
\item the maximum number of parallel bichords plus one in $D(\uno)$.
\item the maximum number of monochords in $D(\uno)$ that can be partitioned into two disjoint subsets $H_1,H_2$ such that all chords in $H_1$ alternate with all chords in $H_2$ and viceversa.
\end{enumerate}
\end{remark}

\subsection{Skein sequences}\label{skein}

Let $a$ be a crossing in a link diagram $D$. Observe that if $a$ is a monochord in $D(\uno)$ we get the skein short exact sequence\footnote{Compare to skein short exact sequences from \cite{ViroFundamenta}.}
\begin{equation}\label{skeinloop}M_{D[a=1]}\lra M_{D[a=0]}\lra M_D, \end{equation} 
whereas if $a$ is a bichord in $D(\uno)$, then the skein short exact sequence becomes
\begin{equation}\label{skeinbichord} M_{D[a=1]}\lra X_{D[a=0]}\lra M_D. \end{equation}

Moreover, when $a$ is a monochord in $D(\uno)$ we also have the following sequence:
\begin{equation}\label{skeinX}X_{D[a=1]}\lra X_{D[a=0]}\lra X_D. \end{equation}

To verify the three above sequences note that the second map is an inclusion whose quotient is the suspension of the first spectrum. 


\begin{df} Let $D$ be a diagram and let $a$ be a monochord in $D(\uno)$. We say that $a$ is:
\begin{enumerate}
\item \emph{$2$-free} if it is not part of any alternating pair in $D(\uno)$. 
\item \emph{$3$-free} if it is not part of any alternating triple in $D(\uno)$. 
\item \emph{free} if it is $2$-free and $3$-free.
\item \emph{$b$-free} if it is not part of any alternating triple involving the bichord $b$ in $D(\uno)$. 
\end{enumerate}
\end{df}

\begin{lemma}\label{lemma:2free} Let $D$ be a diagram and let $a$ be a monochord in $D(\uno)$. 
\begin{enumerate}
\item\label{2free:1} If $a$ is $2$-free, then $X_D,Y_D$ and all $X_D^e$, with $e\neq a$ a monochord, are contractible; if, additionally, $D(\uno)$ contains another $2$-free monochord, then $X^a_D$ is contractible too.
\item\label{2free:3} If $a$ does not form an alternating triple with a bichord $b$ (i.e., $a$ is $b$-free), then $Z^b_D$ is contractible.
\end{enumerate}
\end{lemma}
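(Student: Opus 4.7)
The plan is to establish the contractibility of each subposet in question by an ``$a$-independence'' argument. Viewing a subposet $P\subset\cube{n}$ as a functor $F_P\colon\cube{n}\to\mathbf{S_p}$ taking the value $S^0$ on elements of $P$ and the basepoint otherwise, I will verify in each case that membership in $P$ is invariant under flipping the $a$-coordinate, i.e. that $u\in P$ if and only if the state obtained from $u$ by toggling its $a$-coordinate lies in $P$. Splitting $\cube{n}=\cube{1}\times\cube{n-1}$ so that the first factor is the $a$-direction, this invariance implies that the natural transformation $F_P|_{u_a=1}\to F_P|_{u_a=0}$ is a pointwise isomorphism, so $\Tot F_P$ is the homotopy cofibre of an equivalence and therefore contractible.

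For part (1), the verification is a systematic case analysis driven by Proposition~\ref{prop:char}. For $X_D$, the condition $\Phi(u)=0$ amounts to ``no bichord and no alternating pair in $D(\uno)_u$''; inserting the monochord $a$ creates no bichord, and by $2$-freeness creates no alternating pair either. For $X_D^e$ with $e\neq a$ a monochord, I will first show that $a$ survives as a $2$-free monochord in $D[e=0](\uno)$: since $a$ and $e$ do not alternate, smoothing $e$ keeps $a$'s endpoints on a common sub-circle, and the cyclic order on any sub-circle is inherited from $D(\uno)$, so no new alternating partner for $a$ is introduced; furthermore, smoothing a monochord only splits circles and never merges them, so no chord that was a bichord in $D(\uno)$ becomes a monochord. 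The identical argument with a second $2$-free monochord $a'$ in place of $a$ handles $X_D^a$. For $Y_D$, the crucial observation is that if $u\in Y_D$ then $D(\uno)_u$ cannot contain any bichord---otherwise the ``alternating pair plus bichord'' clause of Proposition~\ref{prop:char} would force $\Phi>1$---so every forbidden configuration that could be created by toggling $a$ would require $a$ itself to lie in an alternating pair, which $2$-freeness forbids.

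For part (2), I plan to run the same strategy for $Z_D^b$, relying on a preliminary geometric reduction exploiting the $b$-freeness hypothesis. Namely, all bichords parallel to $b$ have a single endpoint on each of the two common circles $z,z'$; the $b$-freeness condition says that for every bichord $B$ parallel to $b$, $a$'s endpoints on $z$ do not separate $b$'s endpoint from $B$'s endpoint, so all these endpoints lie on a single arc of $z$ cut by $a$. From this I will deduce the apparently stronger statement that $a$ forms no alternating triple with any pair of bichords parallel to $b$. Combined with the Proposition~\ref{prop:char} observation that $D(\uno)_u$ for $u\in Z_D^b$ contains only mutually parallel bichords (all parallel to $b$, since there is at least one such and two non-parallel bichords would force $\Phi>1$), this upgrade rules out alternating triples being created upon inserting $a$.

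The main obstacle I anticipate is the treatment of the ``alternating pair plus bichord'' configuration in the $Z_D^b$ case: a priori, $a$ could alternate with some monochord $c$ already present in $D(\uno)_{u|_{a=1}}$, which together with the parallel bichord would force $\Phi(u|_{a=0})>1$. The route through this will be to leverage the hypothesis $u|_{a=1}\in Z_D^b$: Proposition~\ref{prop:char} forbids any alternating pair in $D(\uno)_{u|_{a=1}}$ (otherwise ``alternating pair plus bichord'' would already appear there), which severely restricts the set of monochords that can cohabit with the parallel bichord; a case-by-case audit against the five configurations of Proposition~\ref{prop:char}, together with the upgraded $b$-freeness, is expected to close the argument.
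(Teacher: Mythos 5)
Your overall strategy --- showing that membership in each subposet is invariant under toggling the $a$-coordinate of a state and concluding contractibility from the resulting cofibre sequence --- is exactly the paper's argument, which phrases the same thing via the skein sequences \eqref{skeinX} and \eqref{skeinloop} and the claim that their leftmost maps are identities. Your execution of part (1) is correct and in fact supplies details the paper omits: the verification that a $2$-free monochord remains a $2$-free monochord in $D[e=0](\uno)$, and the observation that a state of $Y_D$ can contain no bichord, so the only configuration of Proposition \ref{prop:char} that inserting $a$ could create would have to involve $a$ in an alternating pair. Your ``upgrade'' of $b$-freeness to ``$a$ forms no alternating triple with any pair of bichords parallel to $b$'' is also correct and is a genuinely needed step that the paper glosses over.

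The gap is in part (2), at precisely the point you flag as the anticipated obstacle, and the route you propose does not close it. Knowing that $D(\uno)_u$ (for $u\in Z^b_D$ with $u_a=1$) contains no alternating pair constrains only the monochords already present relative to \emph{each other}; it says nothing about whether one of them alternates with $a$, which is absent from $D(\uno)_u$, and $b$-freeness only governs $a$'s position relative to bichords. In fact the statement of part (2), read literally, fails: take $D(\uno)$ to be two circles joined by two parallel bichords $b_1,b_2$, with two monochords $a,c$ on one circle forming an alternating pair whose endpoints do not separate the endpoints of $b_1,b_2$. Then $a$ is $b_1$-free, but the state with $D(\uno)_u=\{b_1,c\}$ lies in $Z^{b_1}_D$ while toggling $a$ yields $\{b_1,a,c\}$, which contains an alternating pair plus a bichord and hence has $\Phi>1$ by Proposition \ref{prop:char}. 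Listing the elements of $Z^{b_1}_D$ by degree gives ranks $2,5,2$ in degrees $3,2,1$, so $C_*(Z^{b_1}_D)$ has nonzero Euler characteristic and $Z^{b_1}_D$ is not contractible. The missing ingredient is the additional hypothesis that $a$ is $2$-free, which holds in every application of part (2) in the paper (Lemma \ref{lemma:noregions1} and Section \ref{Sec2-free}); with it the argument closes at once, since inserting $a$ then creates no bichord, no alternating pair ($2$-freeness), and no alternating triple (your upgraded $b$-freeness), hence none of the five configurations of Proposition \ref{prop:char}. You should add that hypothesis and argue as above, rather than attempt the case audit you describe.
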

\begin{proof} 
The leftmost map in skein sequence \eqref{skeinX} along the crossing associated to $a$ is identity, hence $X_D$ is contractible. A similar reasoning works for $X_D^e$ when $e \neq a$.

In general, the skein sequence \eqref{skeinloop} does not respect the decomposition in \eqref{eq:pushout}; however, when $a$ is $2$-free, it respects the term $Y_D$ and when $a$ is $b$-free it respects $Z_D^b$. In both cases the maps $Y_{D[a=1]} \to Y_{D[a=0]}$ and $Z^b_{D[a=1]} \to Z^b_{D[a=0]}$ are the identity. 
\end{proof}

\begin{corollary}\label{soloZ}
If $D(\uno)$ contains at least two 2-free monochords, then $$M_D \simeq \bigvee_{[b]\in B} Z^b_D.$$
\end{corollary}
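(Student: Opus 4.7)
The plan is to combine Proposition \ref{propcofibreseq} with Lemma \ref{lemma:2free} and show that all summands of the leftmost term of the defining cofibre sequence for $M_D$, as well as the summand $Y_D$ of the middle term, are contractible. Fix two $2$-free monochords $a, a'$ in $D(\uno)$, whose existence is granted by hypothesis.

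First, applying Lemma \ref{lemma:2free}(\ref{2free:1}) to the monochord $a$ gives at once that $X_D \simeq *$, that $Y_D \simeq *$, and that $X_D^e \simeq *$ for every monochord $e \neq a$; in particular $X_D^{a'}\simeq *$. The existence of the second $2$-free monochord $a'$ is exactly what is needed to invoke the final clause of Lemma \ref{lemma:2free}(\ref{2free:1}) and conclude that the exceptional summand $X_D^a$ is contractible as well. Consequently
$$\Sigma^{-1}\left(\bigvee_{Z(\uno)} X_D \vee \bigvee_{e \in N} X_D^e\right) \simeq *, \qquad Y_D \vee \bigvee_{[b]\in B} Z^b_D \simeq \bigvee_{[b]\in B} Z^b_D.$$
Plugging these into the cofibre sequence \eqref{eq:pushout} of Proposition \ref{propcofibreseq} identifies $M_D$ with the cofibre of a map out of a contractible spectrum, hence $M_D \simeq \bigvee_{[b]\in B} Z^b_D$.

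The argument is essentially a bookkeeping exercise once Proposition \ref{propcofibreseq} and Lemma \ref{lemma:2free} are in hand, so there is no real obstacle. The only delicate point worth flagging is that a single $2$-free monochord $a$ only contracts $X_D^e$ for $e\neq a$, so the hypothesis of having a \emph{second} $2$-free monochord is used precisely to handle the exceptional summand $X_D^a$; this is what explains the form of the statement.
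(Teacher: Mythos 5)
Your proof is correct and is exactly the argument the paper intends (the paper simply states that the result is direct from the cofibre sequence \eqref{eq:pushout} and Lemma \ref{lemma:2free}\eqref{2free:1}); your spelling out of why the second $2$-free monochord is needed to contract the exceptional summand $X_D^a$ matches the role of that hypothesis in the paper.
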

\begin{proof}
The result is direct from cofibre sequence \eqref{eq:pushout} and Lemma \ref{lemma:2free}\eqref{2free:1}.
\end{proof}

\begin{corollary}\label{corollary:freemonochord} If $D$ has a free monochord $e$, then $M_D\simeq X_{D[e=0]}$.
\end{corollary}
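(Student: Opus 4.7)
The plan is to deduce the corollary directly from the cofibre sequence of Proposition \ref{propcofibreseq}, by showing that every summand appearing in the cofibre sequence is contractible except $X_{D[e=0]} = X_D^e$.

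First, I would unpack the freeness hypothesis. Since $e$ is $2$-free, Lemma \ref{lemma:2free}\eqref{2free:1} gives that $X_D$ and $Y_D$ are contractible, and also that $X_D^{e'}$ is contractible for every monochord $e' \neq e$. Since $e$ is $3$-free, it is $b$-free for every bichord $b$ of $D(\uno)$, so Lemma \ref{lemma:2free}\eqref{2free:3} gives that every $Z^b_D$ is contractible as well.

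Next, I would apply Proposition \ref{propcofibreseq}, which furnishes the cofibre sequence
\[
\Sigma^{-1}\!\left(\bigvee_{Z(\uno)} X_D \;\vee\; \bigvee_{e'\in N} X_D^{e'}\right) \longrightarrow Y_D \vee \bigvee_{[b]\in B} Z_D^b \longrightarrow M_D.
\]
Collapsing all the contractible summands identified above, the left-hand term becomes homotopy equivalent to $\Sigma^{-1} X_D^e$, while the middle term becomes contractible. Therefore the cofibre sequence reduces, up to homotopy, to
\[
\Sigma^{-1} X_D^e \longrightarrow * \longrightarrow M_D,
\]
from which one reads off $M_D \simeq \Sigma(\Sigma^{-1} X_D^e) = X_D^e = X_{D[e=0]}$, as desired.

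I do not anticipate a serious obstacle here: the content is a bookkeeping exercise once one observes that the two clauses in the definition of ``free'' (namely $2$-free and $3$-free) kill exactly the $Y_D$-part and the $Z^b_D$-parts of the decomposition, while the $2$-free condition simultaneously kills all but one of the $X_D^{e'}$ summands. The only small subtlety worth double-checking is that being $3$-free indeed implies being $b$-free for every bichord $b$ (which is immediate from the definitions, since an alternating triple involving $b$ is by definition an alternating triple), and that the wedge-summand-by-wedge-summand contraction respects the cofibre sequence, which it does because each $X_D$, $X_D^{e'}$, $Y_D$, $Z_D^b$ enters as a wedge summand and not through any non-trivial attaching.
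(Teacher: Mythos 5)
Your proposal is correct and is exactly the paper's argument: the paper's proof is the one-line observation that Lemma \ref{lemma:2free} makes every subposet in the cofibre sequence \eqref{eq:pushout} contractible except $X_D^e=X_{D[e=0]}$, and you have simply spelled out which clause of the freeness hypothesis kills which summand. The extra details you supply (that $3$-free implies $b$-free for every $b$, and that collapsing wedge summands is compatible with the cofibre sequence) are accurate and harmless elaborations.
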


\begin{proof} Lemma \ref{lemma:2free} implies that the only possibly non-contractible subposet in \eqref{eq:pushout} is $X_D^e = X_{D[e=0]}$. 
%
\end{proof}

\begin{lemma}\label{lemma:noregions1} 
Let $a$ be a 2-free monochord in $D(\uno)$ dividing a circle into two regions so that both of them contain at least one 2-free monochord (we say that $D(\uno)$ contains nested monochords). Then $M_D \simeq \Sigma M_{D[a=1]}$.
\end{lemma}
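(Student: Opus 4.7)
The plan is to apply the skein cofibre sequence \eqref{skeinloop} to the monochord $a$:
\[
M_{D[a=1]} \lra M_{D[a=0]} \lra M_D,
\]
which, once one knows that the middle term is contractible, immediately gives $M_D \simeq \Sigma M_{D[a=1]}$. So the whole problem reduces to proving that $M_{D[a=0]}$ is contractible, and this is where the hypothesis on the nested configuration of $2$-free monochords around $a$ must be used.

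To set up the argument I would fix notation as follows: let $z$ be the circle of $D(\uno)$ on which $a$ sits, let $R_1, R_2$ be the two regions into which $a$ partitions $z$, and let $e_1 \in R_1$, $e_2 \in R_2$ be the promised $2$-free monochords in each region. After smoothing $a$ to $0$, the circle $z$ splits into two circles $z_1, z_2$ carrying $e_1, e_2$ respectively. A first bookkeeping check is that $e_1$ and $e_2$ remain $2$-free in $D[a=0](\uno)$: any monochord on $z_i$ in $D[a=0](\uno)$ lifts to a monochord on $z$ in $D(\uno)$ with both endpoints in $R_i$, and any alternation on $z_i$ would transfer verbatim to $z$, contradicting the $2$-freeness of $e_i$ in $D$.

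Once $D[a=0]$ is equipped with two $2$-free monochords, Corollary \ref{soloZ} gives $M_{D[a=0]} \simeq \bigvee_{[b]} Z^b_{D[a=0]}$, indexed by parallel classes of bichords of $D[a=0](\uno)$; by Lemma \ref{lemma:2free}\eqref{2free:3} it then suffices to produce, for each such $b$, a monochord that is $b$-free. Let $w_1, w_2$ be the two circles meeting $b$. If $\{w_1, w_2\}$ misses $z_1$ (resp.\ $z_2$), then $e_1$ (resp.\ $e_2$) sits on a circle disjoint from the support of $b$ and is automatically $b$-free, so $Z^b_{D[a=0]}$ is contractible. This disposes of every case except $\{w_1, w_2\} = \{z_1, z_2\}$.

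The main obstacle, and the only place where the $2$-freeness of $a$ itself is used, is to rule out that remaining case. A bichord $b$ with endpoints on $z_1$ and $z_2$ in $D[a=0](\uno)$ necessarily corresponds to a chord on $z$ in $D(\uno)$ with one endpoint in $R_1$ and the other in $R_2$; but then $a$ and $b$ would form an alternating pair in $D(\uno)$, contradicting the hypothesis that $a$ is $2$-free. This excludes the bad case, so every $Z^b_{D[a=0]}$ is contractible, $M_{D[a=0]}$ is contractible, and the skein sequence closes the argument.
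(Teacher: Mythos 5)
Your proposal is correct and follows essentially the same route as the paper: apply the skein sequence \eqref{skeinloop} along $a$, reduce contractibility of $M_{D[a=0]}$ to the $Z^b$ terms via the two surviving $2$-free monochords, and use $2$-freeness of $a$ to rule out bichords joining the two new circles so that every $Z^b_{D[a=0]}$ is killed by Lemma \ref{lemma:2free}. Your write-up is in fact slightly more careful than the paper's, since you explicitly verify that the monochords $e_1,e_2$ remain $2$-free after the smoothing.
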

\begin{proof}
We use skein sequence \eqref{skeinloop} and show that $M_{D[a=0]}$ is contractible. 
First, notice that the circle in the statement is splitted into two circles $z_1$ and $z_2$ when changing the $1$-label of $a$ for a $0$-label, each of them attached to at least a $2$-free monochord that we call $c$ and $d$, respectively. Lemma \ref{lemma:2free}\eqref{2free:1} implies that the the only possibly non-contractible spaces when applying cofibre sequence \eqref{eq:pushout} to $D_{[a=0]}$ are $Z_{D[a=0]}^{b}$.

Now, since $a$ is 2-free in $D(\uno)$, there is no bichord connecting $z_1$ and $z_2$ in $D[a=0](\uno)$. Therefore, for any bichord $b$ in $D[a=0](\uno)$ at least one of the monochords $c$ or $d$ are $b$-free, so Lemma \ref{lemma:2free}\eqref{2free:3} implies that $Z_{D[a=0]}^b$ is contractible for all bichords $b$, thus $M_{D[a=0]}$ is contractible. 
\end{proof}

\begin{df}
Two parallel bichords $a$ and $b$ of a chord diagram $D(\uno)$ are equivalent if there is no monochord $e$ so that $a,b$ and $e$ constitute an alternating triple. 
\end{df}

\begin{lemma}\label{lemma:equivalent} If $D$ has two equivalent bichords $a,b$, then $M_D \simeq \Sigma M_{D[a=1]}$.   
\end{lemma}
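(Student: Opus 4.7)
The plan is to apply the skein cofibre sequence \eqref{skeinbichord} along the bichord $a$,
\[ M_{D[a=1]}\lra X_{D[a=0]}\lra M_D, \]
and reduce the lemma to showing that the middle term $X_{D[a=0]}$ is contractible; from the cofibre sequence this immediately yields $M_D\simeq \Sigma M_{D[a=1]}$.

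To establish contractibility of $X_{D[a=0]}$ I would apply Lemma~\ref{lemma:2free}(\ref{2free:1}) to the diagram $D[a=0]$, exhibiting a $2$-free monochord in $D[a=0](\uno)$. The natural candidate is $b$ itself: since $a$ and $b$ were parallel bichords in $D(\uno)$, the two circles $z_1,z_2$ that they connected are merged into a single circle $z$ in $D[a=0](\uno)$, so $b$ becomes a monochord on $z$. The core of the argument is then to verify that this new monochord $b$ is $2$-free in $D[a=0](\uno)$.

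The verification requires enumerating the possible partners $e$ for an alternating pair with $b$ in $D[a=0](\uno)$. These fall into two classes: the monochords of $D(\uno)$ attached to $z_1$ or $z_2$ (which remain monochords on $z$), and the bichords of $D(\uno)$ parallel to $a$ and $b$ (which also become monochords on $z$ after the merging). For the first class, a direct inspection of the cyclic order on $z$ shows that $b$ and $e$ alternate in $D[a=0](\uno)$ precisely when $(a,b,e)$ forms an alternating triple in $D(\uno)$, which is excluded by the equivalence of $a$ and $b$. The main obstacle lies in the second class, since the equivalence hypothesis does not address other parallel bichords; to handle it I would invoke planarity. The endpoints of three pairwise parallel bichords $a,b,c$ between $z_1$ and $z_2$ must appear along $z_1$ in one cyclic order and along $z_2$ in the opposite cyclic order, otherwise two of them would cross in the planar diagram; tracking the cyclic order on the merged circle $z$ arising from the $0$-smoothing at $a$ then forces the pairs $\{b_1,b_2\}$ and $\{c_1,c_2\}$ to be nested rather than interleaved on $z$, so $b$ and $c$ do not alternate. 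With both cases ruled out, Lemma~\ref{lemma:2free}(\ref{2free:1}) delivers the contractibility of $X_{D[a=0]}$ and completes the proof.
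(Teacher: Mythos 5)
Your proposal is correct and follows exactly the paper's argument: apply the skein sequence \eqref{skeinbichord} along $a$ and show $X_{D[a=0]}$ is contractible via Lemma \ref{lemma:2free}(\ref{2free:1}) because $b$ becomes a $2$-free monochord in $D[a=0](\uno)$. The paper simply asserts this $2$-freeness, whereas you carefully justify it (equivalence rules out alternating pairs with old monochords, planarity rules them out with other parallel bichords), which is a welcome addition but not a different approach.
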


\begin{proof} Since $a$ and $b$ are equivalent, the chord $b$ is $2$-free in the chord diagram $D[a=0](\uno)$, so Lemma \ref{lemma:2free}(1) implies that $X_{D[a=0]}$ is contractible. The statement follows from the skein sequence \eqref{skeinbichord} along the chord $a$.
\end{proof}

\section{Diagrams with no alternating pairs}

In this section we determine the homotopy type of $M_D$ (thus, that of the Khovanov spectrum given by Lipshitz and Sarkar in \cite{LSoriginal} for the almost-extreme quantum grading) for diagrams $D$ so that $D(\uno)$ contains no alternating pairs. Observe that this determines their Khovanov homology groups at almost-extreme quantum degree.

\subsection{Diagrams with no monochords} 
This case was studied in \cite{PS}: If $D$ is $1$-adequate (i.e., $D(\uno)$ has no monochords), then $M_D$ is  homotopy equivalent to a wedge of spheres and possibly a suspension of the projective plane. See \cite[Theorem 1.1]{PS} for the details.  

\subsection{Diagrams with one monochord} 

\begin{proposition}
Let $D$ be a link diagram with $n$ crossings so that $D(\uno)$ contains a single monochord attached to a circle $z$, and let $k$ be the number of circles connected to $z$ along an alternating triple. Then, $$|M_D| \simeq \bigvee_{k-1}S^{n-3}.$$
\end{proposition}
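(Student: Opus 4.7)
The proof strategy is to combine the cofibre decomposition of Proposition~\ref{propcofibreseq} with the vanishing lemmas of Section~7. Since $a$ is the unique monochord of $D(\uno)$, no alternating pair exists (so $Y_D$ is contractible) and $a$ is vacuously $2$-free; Lemma~\ref{lemma:2free}(1) thus makes $X_D$ contractible and leaves no other $X_D^e$. Lemma~\ref{lemma:2free}(2) further kills $Z_D^b$ for every parallelism class $[b]$ whose bichords do not form an alternating triple with $a$. The only non-contractible contributions therefore come from the $k$ classes $[b_1],\dots,[b_k]$ associated to the $k$ circles $z_1,\dots,z_k$, and~\eqref{eq:pushout} collapses to the cofibre sequence
\[
\Sigma^{-1} X_D^a \lra \bigvee_{i=1}^k Z_D^{b_i} \lra M_D.
\]

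Next I would identify each piece. The sub-diagram $D[a=0]$ has no monochords (since $a$ was the only one), and because $\Phi$ is non-decreasing downwards in the cube, the only state of $D[a=0]$ with $\Phi=0$ is $\uno$. Hence $X_D^a=X_{D[a=0]}$ is the singleton subposet $\{\uno\}\subset\cube{n-1}$, whose realization is an $(n-2)$-sphere by direct computation of its cubical totalization (equivalently, via Corollary~\ref{corID=XD} with $|I_{D[a=0]}|\simeq S^{-1}$). Thus $|\Sigma^{-1} X_D^a|\simeq S^{n-3}$. For each $|Z_D^{b_i}|\simeq S^{n-3}$, I would proceed by induction on the number of bichords of $D(\uno)$: Lemma~\ref{lemma:equivalent} removes pairs of equivalent bichords (same side of $a$ within an alternating class, or parallel bichords not meeting $z$); and the skein sequence~\eqref{skeinbichord} together with Lemma~\ref{lemma:2free}(1) removes isolated bichords $c$ not involved in alternating triples, since smoothing such a $c$ creates no new monochord, so $a$ remains $2$-free in $D[c=0]$ and $X_{D[c=0]}$ is contractible, giving $M_D\simeq\Sigma M_{D[c=1]}$. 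Both reductions preserve $k$ and drop $n$ by one, so the expected wedge scales as $\Sigma\bigvee_{k-1}S^{n-4}=\bigvee_{k-1}S^{n-3}$. The base case is the irreducible diagram with $n_0=2k+1$ crossings (the monochord $a$ together with exactly one alternating pair of bichords per class $[b_i]$); here each $|Z_D^{b_i}|\simeq S^{n_0-3}$ is verified directly using the Mayer–Vietoris cover $X_{D[b_{i1}=0]}\cup X_{D[b_{i2}=0]}$ of Remark~\ref{remarkMayerVietoris}, whose intersection piece $X_{D[b_{i1}=0,b_{i2}=0]}$ turns out to be contractible by iterated application of the vanishing lemmas.

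The final step is to identify the connecting map $\Sigma^{-1} X_D^a \to \bigvee_{i=1}^k Z_D^{b_i}$. Tracing through the chain-level description of the maps of Section~\ref{SecFunc}, the restriction to each wedge summand $Z_D^{b_i}\simeq S^{n-3}$ is a homotopy equivalence (up to sign), so the assembled map is a diagonal $S^{n-3}\to\bigvee_k S^{n-3}$. Its cofibre is a simply connected CW complex whose reduced homology is free abelian of rank $k-1$ concentrated in degree $n-3$; by Hurewicz–Whitehead the cofibre is $\bigvee_{k-1} S^{n-3}$. The main obstacle is the base-case analysis: one must carefully track how simultaneous $0$-smoothing of the two alternating bichords $b_{i1},b_{i2}$ rearranges the circles of $D(\uno)$ and interacts with the remaining alternating classes, in order to verify both the contractibility of the Mayer–Vietoris intersection piece and the degree~$\pm 1$ of the connecting map on each summand.
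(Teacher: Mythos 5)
Your overall strategy coincides with the paper's: both start from the cofibre sequence \eqref{eq:pushout}, use Lemma \ref{lemma:2free} to kill $X_D$, $Y_D$ and the $Z_D^b$ for classes not forming an alternating triple with the monochord, identify $X_D^e$ with the single top state (so $|X_D^e|\simeq S^{n-2}$ and the left-hand term is $S^{n-3}$), and finish by observing that the attaching map is a diagonal $S^{n-3}\to\bigvee_k S^{n-3}$, whose cofibre is $\bigvee_{k-1}S^{n-3}$. Where you differ is in the computation of each $|Z_D^{b_i}|$: the paper, after reducing via Lemmas \ref{lemma:2free} and \ref{lemma:equivalent} to the case where each circle $z_i$ is joined to $z$ by exactly two bichords forming an alternating triple with the monochord, simply observes that $Z_D^{b_i}$ is then a five-element poset in M-shape (two states in degree $n-1$, three in degree $n-2$) whose associated chain complex has homology $\bZ$ concentrated in one degree, giving $S^{n-3}$ directly.

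Your base case, however, contains a genuine error. In the reduced situation the cover of $Z_D^{b_i}$ from Remark \ref{remarkMayerVietoris} has intersection piece $X_{D[b_{i1}=0,b_{i2}=0]}$, and this is \emph{not} contractible: after $0$-smoothing $b_{i1}$ the chord $b_{i2}$ becomes a monochord alternating with the original monochord $a$, and $0$-smoothing $b_{i2}$ as well re-splits the circle and turns $a$ into a bichord, so $D[b_{i1}=0,b_{i2}=0](\uno)$ contains no monochords at all. Lemma \ref{lemma:2free} therefore cannot apply (it requires a $2$-free monochord), and in fact $X_{D[b_{i1}=0,b_{i2}=0]}$ consists of the single state $\uno$, whose realization is a sphere $S^{n_0-3}$; equivalently, inside $Z_D^{b_i}$ the intersection is the single state $\{b_{i1},b_{i2}\}$ of degree $n_0-2$, the middle vertex of the M. With a contractible intersection your Mayer--Vietoris argument would yield $|Z_D^{b_i}|\simeq S^{n_0-3}\vee S^{n_0-3}$, twice the correct rank, and this error would propagate to a wrong final answer. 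A smaller point: your reduction step ``smoothing such a $c$ creates no new monochord'' is only valid when $c$ has no parallel bichords, since $0$-smoothing a bichord turns every bichord parallel to it into a monochord.
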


\begin{proof}
Write $e$ for the monochord in $D(\uno)$. By Lemma \ref{lemma:2free}\eqref{2free:1}, the decomposition in \eqref{eq:pushout} becomes
\[\Sigma^{-1} X^e_D\lra \bigvee_{[b]\in B} Z^b_D\lra M_D.\]
The subposet $X^e_D$ has a single element in degree $n-1$, thus $|X^e_D|\simeq S^{n-2}$. By Lemma \ref{lemma:2free}\eqref{2free:3}, $Z^b_D$ is contractible unless $b$ forms an alternating triple with $e$. Therefore we may assume, up to suspension, that all bichords are attached to $z$ and form an alternating triple with $e$. Moreover, using Lemma \ref{lemma:equivalent} we can assume, up to suspension, that circles are connected to $z$ by exactly two bichords.  

Therefore, for each such bichord $b$, the subposet $Z^b_D$ has five elements in M-shape, with two elements in degree $n-1$ and three in degree $n-2$, thus $|Z^b_D| \simeq S^{n-3}$, and the above decomposition becomes
\[S^{n-3}\lra \bigvee_{[b]\in B}S^{n-3}\lra |M_D|.\]
The map to the wedge of spheres is a diagonal map. This concludes the proof.
\end{proof}

\subsection{Diagrams with 2-free monochords}\label{Sec2-free}
Let $D$ be a diagram whose associated chord diagram $D(\uno)$ contains more than one monochord, all of them 2-free. Recall from Corollary \ref{soloZ} that in this situation $M_D \simeq \bigvee_{[b]\in B} Z^b_D$, thus we can restrict to the independent study of each connected pair of circles when computing $Z_D^b$. 

At this point we can make some simplifications in $D(\uno)$: Lemmas \ref{lemma:noregions1} and \ref{lemma:equivalent} allow us to remove nested monochords and equivalent bichords when computing $M_D$ (in exchange of taking suspensions). Moreover, by Lemma \ref{lemma:2free}\eqref{2free:3} we can assume that $D(\uno)$ contains no $b$-free monochords for any bichord $b$. These simplifications motivate the following definition:

\begin{df}\label{simple} A diagram $D$ is \emph{simple} if the associated chord diagram $D(\uno)$ contains at least two monochords, all of them 2-free, and satisfies the following conditions:
\begin{enumerate}
\item It contains exactly two circles;
\item It contains no nested monochords; 
\item There are no $b$-free monochords for any bichord $b$;
\item There are no equivalent bichords. 
\end{enumerate}
\end{df}

By definition, if $D$ is simple then $D(\uno)$ can be isotoped in $S^2$ so that monochords and bichords lie in different regions (see Figure \ref{Exsimple}). Moreover, since $D(\uno)$ contains no nested monochords, a circle with $n$ monochords is divided into $n$ half-disks (each of them bounded by a monochord and an arc of the circle) and an additional region (bounded by the $n$ monochords and $n$ arcs of the circle) that we call \emph{polygon}. 

Given a monochord $e$ in $D(\uno)$, we write $d_e$ for the half-disk bounded by $e$\footnote{If $e$ is the only monochord attached to the circle, then there are two possible options for $d_e$; in this case choose an option minimizing $|d_e|$, without loss of generality.}, and $|d_e|$ for the number of bichords having one of their endpoints in $d_e$. In simple diagrams $|d_e|\geq 1$ for any monochord $e$. 

In the proofs of Lemmas \ref{lemma:reduccion3}, \ref{lemma:disk-polygon} and \ref{lemma:disk-disk} we use several results from \cite[Section 3]{JACO}.

\begin{figure}[t]
\centering
\includegraphics[width = 9.5cm]{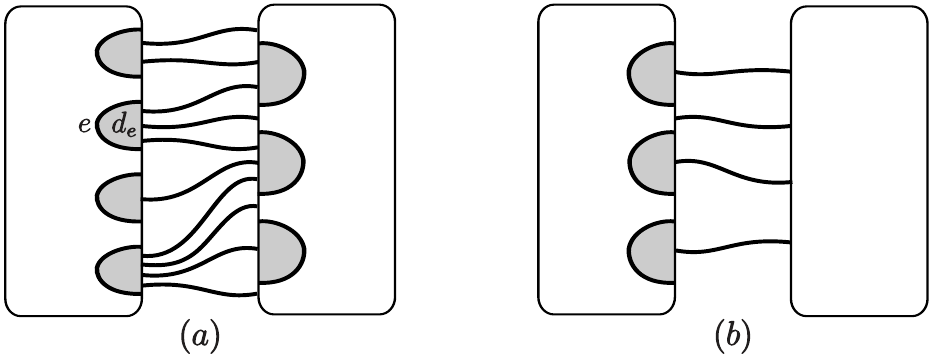}
\caption{\small{Two chord diagrams corresponding to simple diagrams. Grey regions correspond to half-disks bounded by monochords. In $(a)$ $|d_e|=3$, while $|d_{e'}|=1$ for every monochord $e'$ in $(b)$. }}
\label{Exsimple}
\end{figure}

\begin{lemma}\label{lemma:reduccion3} Let $D$ be a simple diagram whose associated chord diagram $D(\uno)$ contains a monochord $e$ so that $|d_e|=1$. Then $|M_D|$ is either contractible or homotopy equivalent to a sphere.
\end{lemma}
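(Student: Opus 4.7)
The plan is to apply the skein sequence \eqref{skeinbichord} to the unique bichord $b$ having an endpoint in $d_e$, namely
\[ M_{D[b=1]} \longrightarrow X_{D[b=0]} \longrightarrow M_D, \]
and show that the leftmost term is contractible, so that $|M_D| \simeq |X_{D[b=0]}|$; then to identify the latter, via Corollary \ref{corID=XD}, as the Spanier--Whitehead dual of an independence complex of a path or a cycle.

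First I would verify contractibility of $M_{D[b=1]}$. Removing $b$ leaves $d_e$ without any bichord endpoint, so the monochord $e$ belongs to no alternating triple of $D[b=1](\uno)$; combined with being $2$-free in $D$, this makes $e$ free in $D[b=1]$. By Corollary \ref{corollary:freemonochord}, $M_{D[b=1]} \simeq X_{D[b=1,e=0]}$. In $D[b=1,e=0](\uno)$, the circle $z_1$ splits into a polygon part $z_1'$ and a small isolated component arising from $d_e$, with the remaining monochords of $D(\uno) \setminus \{e\}$ all on $z_1'$ and no alternating pair among them (by simplicity). Hence $G_L(D[b=1,e=0])$ has no edges, $I_{D[b=1,e=0]}$ is a full simplex, and Corollary \ref{corID=XD} yields that $|X_{D[b=1,e=0]}|$ is contractible.

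It then remains to compute $|X_{D[b=0]}|$, which via Corollary \ref{corID=XD} amounts to computing $|I_{D[b=0]}|$. In $D[b=0](\uno)$, the $0$-smoothing of $b$ merges $z_1$ and $z_2$ into a single circle $z$, and every chord other than $b$ becomes a monochord on $z$. The endpoints of $e$ divide $z$ into two arcs: the arc $\beta$ containing all polygon endpoints on $z_1$, and the complementary arc that traverses the $z_2$ body. A direct check shows that $e$ alternates on $z$ precisely with the former bichords $b_2, \ldots, b_m$; that two original monochords never alternate on $z$; and that the interactions ``monochord $e_i$ vs.\ former bichord $b_j$'' and ``former bichord $b_j$ vs.\ former bichord $b_{j'}$'' are governed respectively by the conditions ``no $b'$-free monochords'' and ``no equivalent bichords''. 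A careful combinatorial analysis of these conditions shows that $G_L(D[b=0])$ is, up to isolated vertices (which only contribute contractible join factors to the independence complex), a path or a cycle. By \cite[Corollary~3.4 and Proposition~3.9]{JACO}, $|I_{D[b=0]}|$ is therefore contractible or homotopy equivalent to a single sphere, whence the same holds for $|M_D| \simeq |X_{D[b=0]}|$.

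The main obstacle is this last combinatorial identification. The cyclic order of endpoints on $z$ must be analyzed carefully, and the simplicity hypotheses have to be used in full to rule out more complicated subgraphs of $G_L(D[b=0])$; one expects separate subcases depending on whether $z_2$ carries monochords and on the number $m$ of bichords.
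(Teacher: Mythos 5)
Your first half tracks the paper: the skein sequence \eqref{skeinbichord} along the unique bichord $b$ meeting $d_e$, contractibility of $M_{D[b=1]}$ (the paper gets this a bit more directly from Lemma \ref{lemma:2free}(2) plus Corollary \ref{soloZ}, since $e$ is $b'$-free for every bichord $b'$ of $D[b=1](\uno)$, but your route via Corollary \ref{corollary:freemonochord} and an edgeless Lando graph also works), and the reduction to $X_{D[b=0]}$. The gap is exactly the step you flag as ``the main obstacle'': the claim that $G_L(D[b=0])$ is, up to isolated vertices, a path or a cycle is false in general. As you yourself observe, after the $0$-smoothing of $b$ merges $z_1$ and $z_2$ into one circle, $e$ alternates with \emph{every} former bichord other than $b$, so $e$ has degree $m-1$ in $G_L(D[b=0])$, where $m$ is the total number of bichords; likewise any monochord $e_i$ with $|d_{e_i}|\geq 3$ is adjacent to at least three former bichords. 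Simple diagrams with $|d_e|=1$ and $m\geq 4$ exist (e.g.\ a second monochord $e_2$ on $z_1$ with three bichords leaving $d_{e_2}$, their $z_2$-endpoints distributed among two half-disks and the polygon of $z_2$ so that no two bichords are equivalent and no monochord is $b'$-free), so vertices of degree $\geq 3$ genuinely occur and \cite[Corollary~3.4, Proposition~3.9]{JACO} do not apply. Note also that even where your identification did hold, a cycle $C_{3k}$ has $|I_{C_{3k}}|\simeq S^{k-1}\vee S^{k-1}$, which would contradict the very statement you are proving, so the cycle case would have to be excluded rather than computed.

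The paper sidesteps all of this by desuspending further before examining any Lando graph: the skein sequence \eqref{skeinX} along $e$ gives $X_{D[b=0]}\simeq\Sigma X_{D[b=0,e=1]}$ (the middle term $X_{D[b=0,e=0]}$ is contractible because $D[b=0,e=0](\uno)$ has no alternating pairs), with one further such reduction along $f$ when $b$ joins two half-disks $d_e$ and $d_f$; it then proves, by a planarity argument on the nested/external chord structure, that the resulting Lando graph is a \emph{forest} --- which may well contain high-degree vertices --- and concludes via \cite[Corollary~3.7]{JACO}, which says independence complexes of forests are contractible or single spheres. To close your argument you would need either to reproduce an acyclicity argument of this kind for the graph you actually obtain, or to carry out the extra desuspensions so that the path/cycle machinery is replaced by the forest criterion; as written, the proposal does not establish the lemma.
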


\begin{proof}
Let $b$ be the only bichord with and endpoint in $d_e$ and consider the skein sequence \eqref{skeinbichord} along $b$:
\[M_{D[b=1]}\lra X_{D[b=0]}\lra M_D.\]

\begin{figure}[h]
\centering
\includegraphics[width = 11cm]{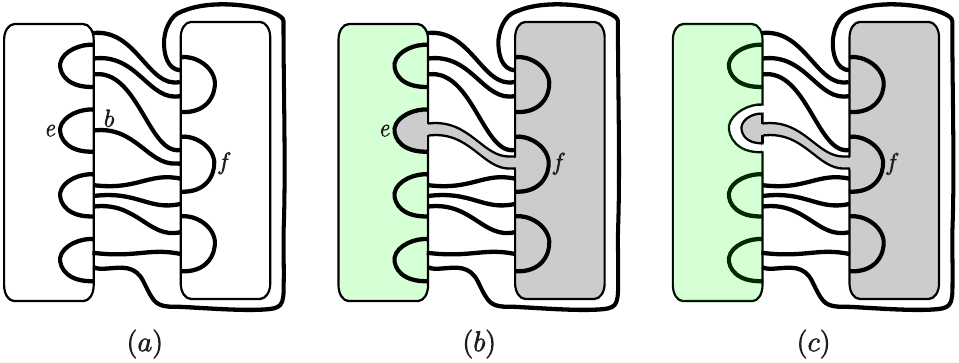}
\caption{\small{The $\uno$-resolutions corresponding to $D$, $D[b=0]$ and $D[b=0, e=0]$ illustrating the proof of Lemma \ref{lemma:reduccion3} are shown in $(a)$, $(b)$ and $(c)$, respectively. Regions $R$ and $R'$ are shaded green and grey, respectively.}}
\label{Discosprueba}
\end{figure}

The monochord $e$ is free in $D[b=1](\uno)$, thus by Lemma \ref{lemma:2free} $Z_{D[b=1]}$ is contractible, thus $M_{D[b=1]}$ is contractible (by Corollary \ref{soloZ}) and $M_D\simeq X_{D[b=0]}$.

Now, $D[b=0](\uno)$ has a single circle and its inner part is separated into two regions $R$ and $R'$ by the monochord $e$. See Figure \ref{Discosprueba}(b). Moreover:
\begin{enumerate}
\item monochords in $D(\uno)$ become internal monochords in $D[b=0](\uno)$;
\item\label{i:2} monochords attached to each of the circles in $D(\uno)$ become monochords contained in each of the regions $R, R'$ in $D[b=0](\uno)$;
\item\label{i:3} bichords in $D(\uno)$ become external monochords in $D[b=0](\uno)$ having one endpoint in $R$ and the other one in $R'$.
\end{enumerate}

Consider now the skein sequence \eqref{skeinX} along the monochord $e$
\[X_{D[b=0,e=1]}\lra X_{D[b=0,e=0]} \lra X_{D[b=0]}.\]
Condition \eqref{i:3} above implies that there are no alternating pairs in $D[b=0,e=0](\uno)$ (see Figure \ref{Discosprueba}(c)), so the middle term in the above sequence is contractible and we have $X_{D[b=0]}\simeq \Sigma X_{D[b=0,e=1]}$.

Assume first that $b$ connects $d_e$ with the region that we call polygon, then we claim that the Lando graph $G$ of $D[b=0,e=1]$, contains no cycles (i.e., it is a forest). In order to prove that, we proceed by contradiction. Suppose that $C_n$ is a cycle of length $n$ in $G$, with vertices $e_1, e_2, \ldots, e_n$ numbered so that $e_1$ is an internal chord in the region $R$. Conditions \eqref{i:2} and \eqref{i:3} imply that monochords $e_{4k+1}$ and $e_{4k+3}$ lie in $R$ and $R'$, respectively. Moreover, the absence of equivalent bichords in $D(\uno)$ implies the absence of cycles of length four in $G$, i.e., $n \geq 8$. The monochord $e_4$ separates the external region into two subregions $S,S'$. Since there are no cycles of length $4$, $e_2$ and $e_6$ lie one in each of these two subregions, say $e_2\in S$ and $e_6\in S'$. Moreover, as the internal monochords are non-nested, all internal monochords but $e_3$ and $e_5$ have both endpoints in either $S$ or $S'$. Therefore
\begin{enumerate}
\item the endpoints of $e_{1}$ lie in $S$ (because $e_2$ lies in $S$) and
\item the endpoints of $e_{2k+1}$ lie in $S'$ if $k\geq 3$ (because $e_{2k}$ lies in $S'$ if $k\geq 3$).
\end{enumerate}

In particular, both endpoints of $e_{n-1}$ lie in $S'$. This is a contradiction, since $e_n$ is a external monochord connecting $e_1$ and $e_{n-1}$. Therefore $G$ contains no cycles, and by \cite[Corollary~3.7]{JACO} its associated independence complex is either contractible or a sphere. Corollary \ref{corID=XD} completes the proof. 

Assume now that $b$ has its endpoints in two half-discs $d_e$ and $d_f$, for some monochord $f$, and consider the skein sequence \eqref{skeinX} along $f$. We get that $X_{D[b=0,e=1,f=0]}$ is contractible, since all monochords in $R'$ are 2-free. Therefore, $X_{D[b=0,e=1]}\simeq \Sigma X_{D[b=0,e=1,f=1]}$. The latter is the same as $X_{D'[b=0,e=1]}$, where $D' = D[f=1]$ is a diagram where $e$ connects $d_e$ with the polygon, which was studied in the previous case.
\end{proof}

\begin{remark}
Since simple diagrams do not contain equivalent bichords, if $|d_e| \geq 4$ for some monochord $e$, then there exists a monochord $f$ with $|d_f|=1$ ($d_f$ is connected to $d_e$ by a bichord). In other words, if $|d_e| > 1$ for all monochords $e$, then $|d_e |\leq 3$ for all monochords $e$.    
\end{remark}

\begin{df}
A simple diagram $D$ so that $2 \leq |d_e| \leq 3$ for every monochord $e$ of $D(\uno)$ is called \textit{super-simple}. 
\end{df}

\begin{figure}[t]
\centering
\includegraphics[width = 10cm]{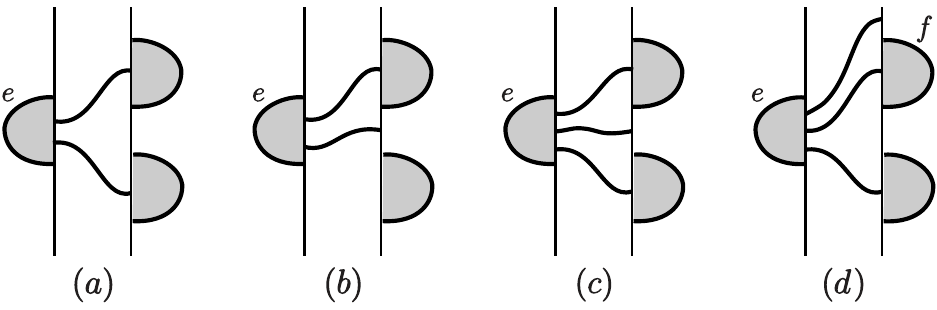}
\caption{\small{The three possible situations for a half-disk $d_e$ in the 1-resolution of a super-simple diagram are illustrated in Figures (a), (b) and (c). Situation (d) is not allowed in semi-simple diagrams.}}
\label{Casosdiscos}
\end{figure}

If $D$ is super-simple, every half-disk $d_e$ in $D(\uno)$ satisfies one of the three following conditions:
\begin{enumerate}
\item $|d_e|=2$, and both bichords connect $d_e$ with two contiguous half-disks in the other circle (see Figure \ref{Casosdiscos}(a));
\item $|d_e|=2$, and one bichord connects $d_e$ with a half-disk in the other circle and the other one connects $d_e$ with the polygon in the other circle (see Figure \ref{Casosdiscos}(b));
\item $|d_e|=3$, and two bichords connect $d_e$ with two contiguous half-disks in the other circle, and the third one is placed between them and connects $d_e$ with the polygon of the other circle (see Figure \ref{Casosdiscos}(c)).
\end{enumerate} 

The next statement follows from Lemmas \ref{lemma:disk-polygon} and \ref{lemma:disk-disk}.

\begin{proposition}\label{prop_supersimple}
Let $D$ be a super-simple diagram. Then $|M_D|$ is homotopy equivalent to a wedge of spheres.
\end{proposition}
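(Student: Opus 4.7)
The plan is to use the wedge decomposition provided by Corollary \ref{soloZ}. A super-simple diagram is in particular simple and contains at least two $2$-free monochords, so that corollary gives
\[|M_D| \simeq \bigvee_{[b]\in B} |Z^b_D|,\]
where $B$ is the set of equivalence classes of parallel bichords. Accordingly, it will suffice to prove that each wedge summand $|Z^b_D|$ is contractible or homotopy equivalent to a wedge of spheres.

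The next step is to split the argument according to how the bichord $b$ is anchored. Inspecting the three local configurations permitted by the super-simple hypothesis (Figure \ref{Casosdiscos}), every bichord of $D(\uno)$ falls into exactly one of two types: either $b$ connects two half-disks $d_e$ and $d_f$ on opposite circles (the \emph{disk-disk} type), or $b$ connects a half-disk $d_e$ on one circle to the polygon of the opposite circle (the \emph{disk-polygon} type). I would dispatch these two cases with Lemma \ref{lemma:disk-disk} and Lemma \ref{lemma:disk-polygon} respectively; assembling the two lemmas into the wedge decomposition above then yields the proposition.

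For the two supporting lemmas my approach would be to apply the Mayer--Vietoris spectral sequence of Remark \ref{remarkMayerVietoris} to $Z^b_D$, whose entries are spaces of the form $|X_{D[b_1=0,\dots,b_k=0]}|$ with $b_1,\dots,b_k$ parallel to $b$. By Corollary \ref{corID=XD} each such term is, up to a suspension shift, Spanier--Whitehead dual to the independence complex of the Lando graph of the partially smoothed diagram. The constraints $2 \leq |d_e| \leq 3$, together with the absence of nested monochords and of equivalent bichords, should force these Lando graphs to be disjoint unions of short paths and cycles, whose independence complexes are computed in \cite{JACO} and shown there to be wedges of spheres. The main obstacle will be the case-by-case bookkeeping: combining configurations (a), (b), (c) across both circles produces several sub-cases that must be tracked separately, and in some of them one may need an auxiliary skein reduction along a monochord using \eqref{skeinX}, in the spirit of Lemma \ref{lemma:reduccion3}, before landing inside the catalogue of \cite{JACO}. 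Once this is exhausted the spectral sequence collapses and yields the desired wedge of spheres.
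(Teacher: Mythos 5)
Your top-level reduction is the right one and is exactly what the paper does: Proposition \ref{prop_supersimple} is stated to follow from Lemmas \ref{lemma:disk-polygon} and \ref{lemma:disk-disk}, whose hypotheses (``at least one bichord meets a polygon'' versus ``every bichord joins two half-disks'') exhaustively cover super-simple diagrams. However, your framing of the case split is off in a way worth flagging. A super-simple diagram has exactly two circles, so \emph{every} bichord joins $z_1$ to $z_2$ and all bichords are pairwise parallel; hence $B$ is a singleton and the wedge $\bigvee_{[b]\in B} Z^b_D$ from Corollary \ref{soloZ} has a single summand. The dichotomy is therefore a property of the whole diagram, not of individual wedge summands: one cannot ``dispatch the disk-disk bichords with Lemma \ref{lemma:disk-disk} and the disk-polygon bichords with Lemma \ref{lemma:disk-polygon}'' separately, because both kinds of bichord can coexist in the same (unique) summand, and each lemma computes $|M_D|$ for the entire diagram under its global hypothesis. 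Read charitably, your conclusion still stands, since the two hypotheses are exhaustive.

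The genuine gap is in your proposed proof of the two supporting lemmas. The Mayer--Vietoris spectral sequence of Remark \ref{remarkMayerVietoris} computes only the (co)homology of $Y_D$ and $Z^b_D$, and for spectra free homology does not imply a splitting as a wedge of spheres (the suspension spectrum of $\bR P^2$ or $\bC P^2$ already shows that nontrivial attaching maps can survive a homology computation). So ``the spectral sequence collapses'' cannot by itself yield the homotopy-type conclusion the lemmas assert. The paper avoids this entirely: it proves the lemmas by iterating the skein cofibre sequences \eqref{skeinbichord} and \eqref{skeinX}, at each step showing that one of the three terms is \emph{contractible} (via Lemma \ref{lemma:2free}, domination arguments, or freeness of a monochord), so that the remaining two terms are genuinely homotopy equivalent up to suspension; the process terminates at posets $X_{D'}$ that Corollary \ref{corID=XD} identifies with duals of independence complexes of paths, cycles, or forests, whose homotopy types are known from \cite{JACO}. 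Your instinct that an auxiliary skein reduction ``in the spirit of Lemma \ref{lemma:reduccion3}'' may be needed is correct, but in the paper that mechanism is the whole proof, not a supplement to a spectral-sequence argument.
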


\begin{lemma}\label{lemma:disk-polygon}
Let $D$ be a super-simple diagram so that $D(\uno)$ contains at least one bichord connecting a half-disk with the region called polygon. Then $|M_D|$ is either contractible or homotopy equivalent to a sphere.
\end{lemma}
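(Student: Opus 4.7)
The plan is to use the skein sequence \eqref{skeinbichord} along the bichord $b$ connecting a half-disk $d_e$ to the polygon of the opposite circle, obtaining
\[M_{D[b=1]}\longrightarrow X_{D[b=0]}\longrightarrow M_D,\]
and then to analyze the two outer terms, aiming to show that each is contractible or a sphere and that at most one is non-contractible. The cofibre sequence then yields the desired conclusion for $|M_D|$.

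For the term $M_{D[b=1]}$, I would proceed by cases on the super-simple configuration of $d_e$ in Figure \ref{Casosdiscos}. In configuration (c), after removing $b$ the two bichords remaining in $d_e$ become equivalent, so Lemma \ref{lemma:equivalent} reduces the computation, up to suspension, to configuration (b). In configuration (b), one has $|d_e|=1$ in $D[b=1]$, and I would combine Lemma \ref{lemma:2free} with further simplifications via Lemma \ref{lemma:equivalent} and Lemma \ref{lemma:noregions1} to show that $M_{D[b=1]}$ is contractible. When these reductions produce a diagram that is simple with $|d_e|=1$, Lemma \ref{lemma:reduccion3} applies directly, giving that the result is contractible or a sphere. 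The case analysis can proceed by induction on the number of bichords in $D(\uno)$, using Lemma \ref{lemma:disk-disk} as the base case for the subcase in which no polygon bichord remains in the reduced diagram.

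For the term $X_{D[b=0]}$, I would closely follow the strategy of the proof of Lemma \ref{lemma:reduccion3}. The $0$-smoothing at $b$ merges the two circles of $D(\uno)$ into one, and the monochord $e$ now separates the resulting disk into two regions $R,R'$, where $R$ contains the old $d_e$ glued to the polygon of $z_2$ via the bridge introduced by $b$. The skein sequence \eqref{skeinX} applied at $e$ gives $X_{D[b=0]}\simeq \Sigma X_{D[b=0,e=1]}$, since $D[b=0,e=0](\uno)$ contains no alternating pairs (its chords split between $R$ and $R'$ without crossing $e$) and therefore $X_{D[b=0,e=0]}$ is contractible. A cycle-length argument analogous to that in Lemma \ref{lemma:reduccion3}, adapted to the polygon-connecting setting, would then establish that the Lando graph of $D[b=0,e=1]$ has no cycles; by \cite[Corollary 3.7]{JACO} and Corollary \ref{corID=XD}, this forces $|X_{D[b=0]}|$ to be contractible or a sphere.

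The main obstacle will be controlling the cofibre: a priori the sequence $M_{D[b=1]}\to X_{D[b=0]}\to M_D$ could produce a wedge of two spheres rather than a single one. Resolving this requires either ensuring that at least one of the two outer terms is contractible in every case, or identifying the connecting map precisely enough to conclude that its cofibre is at most a sphere. A careful combinatorial analysis of the super-simple configuration, together with strategic applications of Lemmas \ref{lemma:equivalent} and \ref{lemma:noregions1} to reduce to manageable subcases, will likely be needed to handle the interplay between polygon-connecting bichords and the half-disk-to-half-disk bichords that may coexist on the same circle.
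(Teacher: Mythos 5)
Your opening move differs from the paper's in a way that creates the very problem you flag at the end and never resolve. The paper does \emph{not} cut along the polygon bichord $b$: it cuts along a bichord $a$ joining $d_e$ to a half-disk $d_f$ of the other circle (such an $a$ exists by super-simplicity), and then shows that the middle term $X_{D[a=0]}$ of \eqref{skeinbichord} is \emph{contractible} by a further application of \eqref{skeinX} along the monochord $f$ (in $D[a=0,f=1]$ the chord $b$ becomes a free monochord, and in $D[a=0,f=0]$ the remaining monochords on $z_2$ become $2$-free). This collapses the cofibre sequence to $M_D\simeq \Sigma M_{D[a=1]}$, with no connecting map to analyze; iterating once more if $|d_e|=3$ reduces to $|d_e|=1$ and Lemma \ref{lemma:reduccion3}. (A separate enumeration is still needed when $z_2$ carries only one monochord, which the paper handles by listing the six possible chord diagrams.) Your sequence $M_{D[b=1]}\to X_{D[b=0]}\to M_D$ leaves both outer terms potentially equal to spheres, and you give no identification of the connecting map, so you cannot rule out a wedge of two spheres; this is an acknowledged but genuine gap, not a technicality.

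Two of your intermediate reductions are also flawed. First, in configuration (c) of Figure \ref{Casosdiscos} the two bichords remaining in $d_e$ after setting $b=1$ are \emph{not} equivalent: each one has its second endpoint inside a half-disk $d_f$ (resp.\ $d_h$) of the other circle, so along that circle their endpoints alternate with the endpoints of the monochord $f$ (resp.\ $h$), producing an alternating triple; hence Lemma \ref{lemma:equivalent} does not apply and the reduction from (c) to (b) fails. Second, your claim that $M_{D[b=1]}$ is contractible is unsupported: after $b=1$ one has $|d_e|=1$ and Lemma \ref{lemma:reduccion3} only yields ``contractible or a sphere,'' and the sphere case genuinely occurs, which feeds back into the unresolved cofibre ambiguity above. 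The analysis of $X_{D[b=0]}$ by imitating Lemma \ref{lemma:reduccion3} is also shakier than you suggest, since the other bichords with endpoints in $d_e$ do not become chords joining $R$ to $R'$ as in that lemma's hypothesis $|d_e|=1$. The cleanest fix is to abandon the cut along $b$ and follow the paper's choice of cutting along a disk-to-disk bichord adjacent to $d_e$.
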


\begin{proof}
Let $b$ be a bichord connecting a half-disk $d_e$ in a circle $z_1$ with the polygon of the circle $z_2$, for some monochord $e$.  See Figure \ref{Discopoligono}$(a)$. Since $D$ is super-simple, there exists at least a bichord $a$ connecting $d_e$ with $d_f$ for a monochord $f$ attached to $z_2$. We distinguish two cases, depending on whether $z_2$ contains at least two monochords or it contains just a single monochord. 

\textit{Case 1:} Assume that the chord diagram $D(\uno)$ contains more than one monochord attached to $z_2$, and consider the skein exact sequence \eqref{skeinbichord} along $a$:
$$
M_{D[a=1]}\lra X_{D[a=0]}\lra M_D. 
$$

\begin{figure}[t]
\centering
\includegraphics[width = 11.5cm]{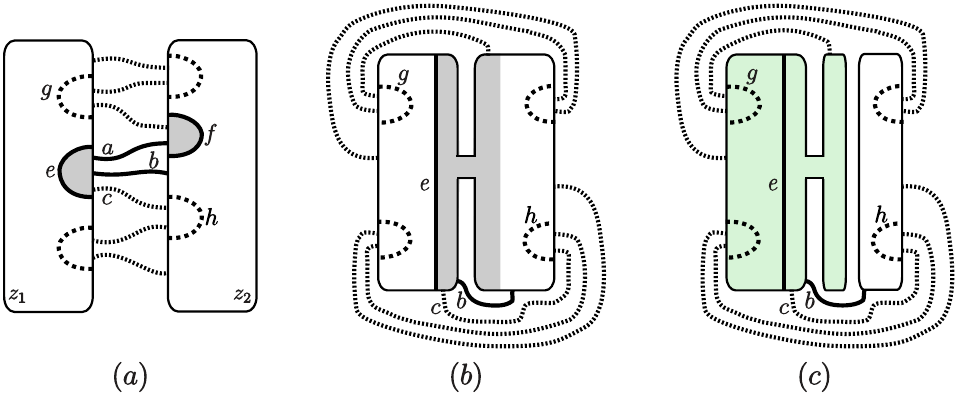}
\caption{\small{The $\uno$-resolutions corresponding to $D$, $D[a=0, f=1]$ and $D[a=0, f=0]$ illustrating the proof of Lemma \ref{lemma:disk-polygon} are shown in $(a), (b)$, and $(c)$, respectively. Dotted lines represent those chords that may or may not be in the diagrams.}}
\label{Discopoligono}
\end{figure}

We will show that $X_{D[a=0]}$ is contractible. To do so, we apply the skein sequence \eqref{skeinX} along the monochord $f$: 
$$
X_{D[a=0, f=1]}\lra X_{D[a=0, f=0]} \lra X_{D[a=0]}. 
$$

Figure \ref{Discopoligono}$(b)$ represents $D[a=0, f=1](\uno)$, where $b$ is a free monochord. Moreover, any monochord different from $f$ attached to $z_2$ becomes 2-free in $D[a=0, f=0](\uno)$, as illustrated in Figure \ref{Discopoligono}$(c)$. Therefore by Lemma \ref{lemma:2free}(1) $X_{D[a=0, f=1]}$ and $X_{D[a=0, f=0]}$ are contractible, and so is $X_{D[a=0]}$. 

As a consequence, $M_D \simeq \Sigma M_{D[a=1]}$. If $|d_e| = 2$ in $D(\uno)$, then $|d_e|=1$ in $D[a=1](\uno)$, and the statment holds by Lemma \ref{lemma:reduccion3}. If $|d_e| = 3$ in $D(\uno)$, then $|d_e|=2$ in $D[a=1](\uno)$ and it contains a bichord $c$ connecting $d_e$ with $d_h$ for a monochord $h$ (contiguous to $f$) attached to the circle $z_2$. Applying the skein exact sequence \eqref{skeinbichord} along $c$ and repeating the same reasoning as before leads to $M_D \simeq \Sigma^2 M_{D[a=1,c=1]}$. Since $|d_e|=1$ in $D[a=1,c=1](\uno)$,  Lemma \ref{lemma:reduccion3} completes the proof for this case. 

\textit{Case 2:} Assume that there is just one monochord $f$ attached to the circle $z_2$ in $D(\uno)$. Then, since $D$ is super-simple, $D(\uno)$ is as one of the six chord diagrams depicted in Figure \ref{Solo1monocuerda} (each picture leads to two possible chord diagrams, depending on whether it contains the bichord $p$ or not). In order to compute $M_D$ for each of these situations, we use skein sequence \eqref{skeinbichord} together with Corollary \ref{corID=XD}. We also use \cite[Lemma 3.2, Corollary 3.3]{JACO} to compute $I_D$: 

\begin{itemize}
\item [-] If $D(\uno)$ is as in Figure \ref{Solo1monocuerda}$(a)$ with $p$: we consider the skein sequence \eqref{skeinbichord} along the bichord $c$, and get that $|X_{D[c=0]}| \simeq S^2$. Next, we show that $M_{D[c=1]}$ is contractible by applying the skein sequence \eqref{skeinbichord} to $D[c=1]$ along the bichord $a$ (we use the fact that chord $f$ is free in $D_{[c=1, a=1]}(\uno)$). Therefore, $|M_D| \simeq |X_{D[c=0]}| \simeq S^2$.

\item [-] If $D(\uno)$ is as in Figure \ref{Solo1monocuerda}$(a)$ without $p$: we proceed as in the previous case, and get that both $M_{D[c=1]}$ and $X_{D[c=0]}$ are contractible, and so is $M_D$.

 \item [-] If $D(\uno)$ is as in Figure \ref{Solo1monocuerda}$(b)$ with $p$: we consider the skein sequence \eqref{skeinbichord} along the bichord $a$, and get that $|X_{D[a=0]}| \simeq S^4$. Next, we show that $M_{D[a=1]}$ is contractible by applying the skein sequence \eqref{skeinbichord} to $D[a=1]$ along the bichord $b$ (we use the fact that chord $e$ is free in $D_{[a=1, b=1]}(\uno)$). Therefore, $|M_D| \simeq |X_{D[a=0]}| \simeq S^4$.
 
\item [-] If $D(\uno)$ is as in Figure \ref{Solo1monocuerda}$(b)$ without $p$: the procedure is analogous to the previous one, the only difference is that $|X_{D[a=0]}| \simeq S^3$ and therefore $|M_D| \simeq S^3$.

\item [-] If $D(\uno)$ is as in Figure \ref{Solo1monocuerda}$(c)$ with $p$: we consider the skein sequence \eqref{skeinbichord} along the bichord $a$, and get that $X_{D[a=0]}$ is contractible. Next, we show that $M_{D[a=1]}$ is also contractible by applying the skein sequence \eqref{skeinbichord} to $D[a=1]$ along the bichord $b$ (we use the fact that chord $e$ is free in $D_{[a=1, b=1]}(\uno)$). Therefore, $M_D$ is contractible. 

 \item [-] If $D(\uno)$ is as in Figure \ref{Solo1monocuerda}$(c)$ without $p$: we proceed as in the previous case, and get that $X_{D[a=0]}$ is contractible and $|M_{D[a=1]}| \simeq S^3$. Therefore, $|M_D| \simeq \Sigma |M_{D[a=1]}| \simeq S^4$. \qedhere
\end{itemize}
 
\begin{figure}[t]
\centering
\includegraphics[width = 11.5cm]{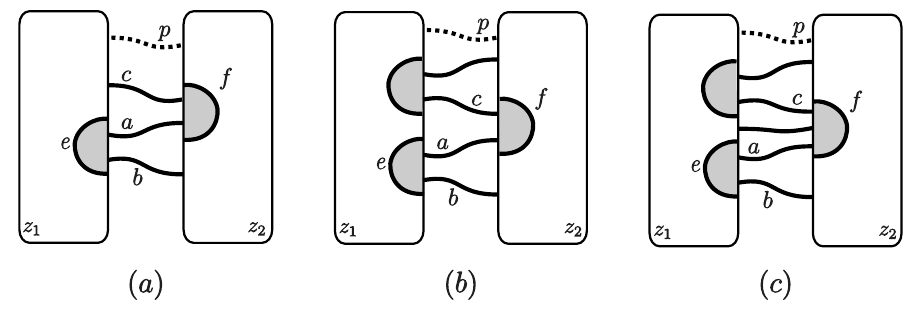}
\caption{\small{The six possible chord diagrams illustrating \textit{Case 2} of the proof of Lemma \ref{lemma:disk-polygon}.}}
\label{Solo1monocuerda}
\end{figure}

\end{proof}

\begin{lemma}\label{lemma:disk-disk}
Let $D$ be a super-simple diagram so that all bichords in $D(\uno)$ connect two half-disks (i.e., there are no bichords with an endpoint in the region called polygon). Then, $|M_D|$ is homotopy equivalent to a wedge of spheres. More precisely, if we write $2n$ for the number of monochords in $D$, then 
$$|M_D| \simeq \begin{cases}
S^{\frac{8n}{3}-1} \vee S^{\frac{8n}{3}-1} & \text{if  } n \equiv 0 \text{ (mod 3)}, \\
S^{\frac{8n+1}{3}-1}  & \text{if  } n \equiv 1 \text{ (mod 3)}, \\
S^{\frac{8n+2}{3}-2}  & \text{if  } n \equiv 2 \text{ (mod 3)}.
\end{cases}$$
\end{lemma}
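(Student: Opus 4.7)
I plan to prove the formula by strong induction on $n$, with the cases $n = 2, 3, 4$ as bases, combining the skein sequences of Section \ref{skein} with Corollary \ref{corID=XD}, which identifies each $X_{D'}$ with the dual of the independence complex of the Lando graph of $D'$. The appearance of $8n/3$ and of a three-fold residue pattern strongly suggests that the final input is the homotopy type of $I_{C_m}$ for $m$ of order $3n$, whose computation in \cite{JACO} has exactly the same residue behaviour.

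The setup is as follows. Label the monochords cyclically as $e_1,\dots,e_n$ on one circle and $f_1,\dots,f_n$ on the other, and the bichords as $b_1,\dots,b_{2n}$ so that $b_{2i-1}$ joins $d_{e_i}$ to $d_{f_i}$ and $b_{2i}$ joins $d_{e_i}$ to $d_{f_{i+1}}$. Since there are only two circles, all bichords are parallel; together with the fact that the monochords are all $2$-free and there are more than one of them, Corollary \ref{soloZ} gives $|M_D|\simeq |Z^b_D|$ for any bichord $b$. I would then apply the skein sequence \eqref{skeinbichord} along $b_1$,
\[M_{D[b_1=1]}\lra X_{D[b_1=0]}\lra M_D,\]
and handle the two inputs separately. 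In $D[b_1=1]$ we have $|d_{e_1}|=|d_{f_1}|=1$, so Lemma \ref{lemma:reduccion3} applies; I would then iterate the skein sequence along $b_2$ (also incident to $d_{e_1}$), observe that $e_1$ becomes free in $D[b_1=1,b_2=1]$, and invoke Corollary \ref{corollary:freemonochord} to reduce to $X_{D[b_1=1,b_2=1,e_1=0]}$, which after an analogous reduction on circle 2 (using the bichord in $d_{f_1}$) produces a super-simple diagram with $n-1$ monochord pairs, so that the induction hypothesis determines its homotopy type.

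For the term $X_{D[b_1=0]}$, the $0$-smoothing of $b_1$ merges both circles of $D(\uno)$ into a single circle on which the remaining monochords $e_i,f_i$ and the former bichords $b_2,\dots,b_{2n}$ all appear as chords. A direct check of the cyclic ordering of their endpoints shows that the Lando graph of $D[b_1=0]$ decomposes as the union of contractible pendant structure (coming from the isolated positions of $e_1$ and $f_1$) and a cycle graph $C_m$ with $m$ of order $3n$. Combining \cite[Proposition 3.9]{JACO} (for $|I_{C_m}|$) with Corollary \ref{corID=XD} then yields $|X_{D[b_1=0]}|$ as a wedge of one or two spheres of exactly the predicted dimensions, with the wedge of two spheres occurring precisely when $n\equiv 0\pmod 3$.

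The main obstacle is the bookkeeping in two places. First, the claim that the Lando graph of $D[b_1=0]$ is (up to contractible summands) a cycle of the correct length must be verified by careful planar case analysis on how the smoothed bichords alternate with the surviving monochords on the merged circle. Second, one must show that the connecting map in each skein cofibre sequence is null-homotopic, so that $|M_D|$ genuinely splits as a wedge of spheres rather than a more complicated CW-complex; this is most delicate in the case $n\equiv 0\pmod{3}$, where matching the two wedge summands with the two classes generating $\tilde H_{k-1}(I_{C_{3k}})$ requires exhibiting two independent cycles in the Lando graph that survive the reduction and give rise to homotopically distinct spheres in $|M_D|$.
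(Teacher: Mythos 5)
Your opening move (the skein sequence \eqref{skeinbichord} along one bichord incident to $d_{e_1}$, then a separate analysis of the two terms) is the same as the paper's, but both of your key computational claims fail. First, the Lando graph of $D[b_1=0]$ is not, up to pendant vertices, a cycle of order about $3n$. After the two circles merge, the Lando graph has $4n-1$ vertices ($2n$ surviving monochords plus $2n-1$ former bichords), and the paper shows that $e_1$ dominates $e_2$ and $f_1$ dominates $f_2$, so by the domination lemma of \cite{JACO} the independence complex is that of a \emph{path} $L_{4n-4}$. Thus $|I_{G_L(D[b_1=0])}|$ is a single sphere or contractible, never a wedge of two spheres; if your cycle claim were correct, the $n\equiv 0$ case of $I_{C_{3k}}$ would force $X_{D[b_1=0]}$ to be a wedge of two spheres, contradicting the actual answer. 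The mod-$3$ pattern and the exponent $8n/3$ come from $|I_{L_m}|$, and the two wedge summands at $n\equiv 0$ arise because \emph{both} cofibre terms are then nontrivial single spheres in adjacent dimensions, with the connecting map $S^{8n/3-2}\to S^{8n/3-1}$ null for degree reasons --- so the delicate matching of cycles in a Lando graph that you anticipate never occurs.

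Second, the induction on $n$ for the term $M_{D[b_1=1]}$ does not close up. In $D[b_1=1,b_2=1]$ the monochord $e_1$ is free, and since the remaining monochords are still $2$-free, Lemma \ref{lemma:2free} and Corollary \ref{soloZ} show this functor is \emph{contractible}; there is nothing to compute by induction there, and instead $M_{D[b_1=1]}\simeq X_{D[b_1=1,b_2=0]}$, which is again a Lando-graph computation (a path $L_{4n-5}$ after domination), not a smaller instance of the lemma. Moreover, the diagram you would feed to the inductive hypothesis is not of the required type: deleting $e_1$, $f_1$ and the three bichords meeting $d_{e_1}\cup d_{f_1}$ leaves the neighbouring half-disks with $|d|=1$, so the result is simple but not super-simple with all bichords joining half-disks, and your induction hypothesis does not apply. (Your reduction of $M$ to an $X$ via Corollary \ref{corollary:freemonochord} also cannot be iterated as an induction on $M_{D'}$, since $X$ and $M$ are different functors.) To repair the proof you should replace both branches by the explicit path computations $L_{4n-4}$ and $L_{4n-5}$ and the dualization of Corollary \ref{corID=XD}, as the paper does.
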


\begin{proof}
First, notice that since all bichords in $D(\uno)$ connect two half-disks, the number of monochords equals the number of bichords. We label the chords as follows (see Figure \ref{Supersimpl}$(a)$): we write $e_1, \ldots, e_n$ (resp. $f_1, \ldots, f_n)$ for the monochords attached to the circle $z_1$ (resp. $z_2$), and $a_i$ (resp. $b_i$) for the bichord having its endpoints in the half-disks $d_{e_i}$ and $d_{f_i}$ (resp. $d_{e_{i+1}}$ and $d_{f_i}$), for $1\leq i \leq n$.

Consider the skein sequence \eqref{skeinbichord} along the bichord $a_1$:
\begin{equation}\label{proofnotpolygon}
M_{D[a_1=1]} \lra X_{D[a_1=0]} \lra M_{D}. 
\end{equation}

We study now the homotopy type of $X_{D[a_1=0]}$. Figure \ref{Supersimpl}$(b)$ shows the chord diagram $D[a_1=0](\uno)$, whose associated Lando graph $G= G_L(D[a_1=0])$ is as depicted in Figure \ref{Supersimpl}(c). Now, since vertex $e_1$ dominates\footnote{Following \cite{JACO}, given two vertices $v$ and $w$ in a graph, we say that $v$ dominates $w$ if the adjacent vertices to $w$ are also adjacent to $v$.} $e_2$ and vertex $f_1$ dominates $f_2$, it follows from \cite[Lemma 3.2]{JACO} that the independence complex associated to $G_L$ is homotopy equivalent to the independence complex associated to the graph $G-e_1-f_1$, which is a path of length $4n-4$, $L_{4n-4}$, whose independence complex follows from \cite[Corollary 3.4]{JACO}:
$$|I_G| \simeq |I_{G-e_1-f_1}| = |I_{L_{4n-4}}| \simeq \begin{cases}
* & \text{if  } n \equiv 1 \mod 3, \\
S^{\lfloor \frac{4n-4}{3}\rfloor} & \text{otherwise},
\end{cases}$$

and by Corollary \ref{corID=XD} we get:
$$|X_{D[a_1=0]}| \simeq \begin{cases}
* & \text{if  } n \equiv 1 \mod 3, \\
S^{4n-3 - \lfloor \frac{4n-4}{3}\rfloor} & \text{otherwise}.
\end{cases}$$

\begin{figure}[t]
\centering
\includegraphics[width = 11.5cm]{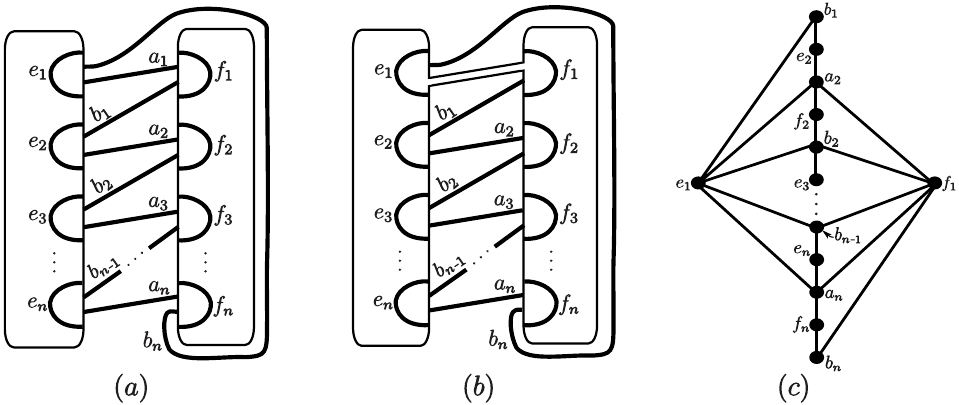}
\caption{\small{The chord diagram $D(\uno)$ satisfying conditions in Lemma \ref{lemma:disk-disk} is shown in $(a)$. The chord diagram and the Lando graph associated to $D[a_1=0]$ are shown in $(b)$ and $(c)$, respectively.}}
\label{Supersimpl}
\end{figure}

Next, we compute the homotopy type of $M_{D[a_1=1]}$. Consider the skein sequence \eqref{skeinbichord} along the bichord $b_1$:
$$
M_{D[a_1=1, b_1=1]} \lra X_{D[a_1=1, b_1=0]} \lra M_{D[a_1=1]}. 
$$

\begin{figure}[t]
\centering
\includegraphics[width = 11.5cm]{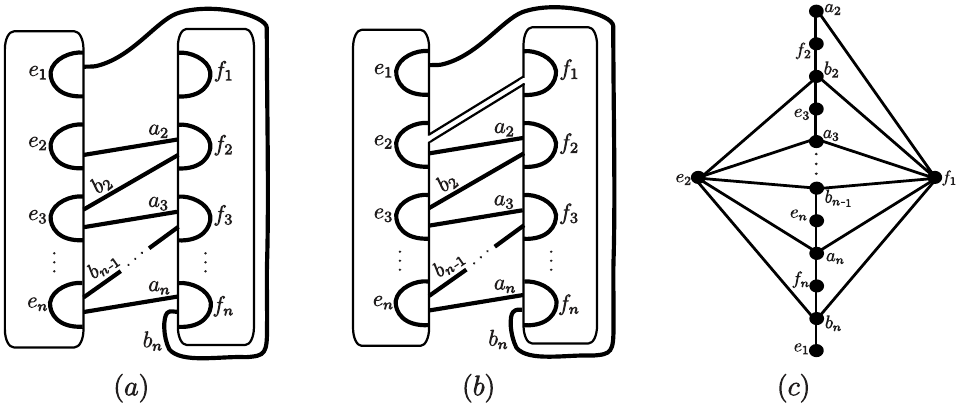}
\caption{\small{The chord diagram associated to $D[a_1=1,b_1=1]$ is shown in $(a)$. The chord diagram and the Lando graph associated to $D[a_1=1,b_1=0]$ are shown in $(b)$ and $(c)$, respectively.}}
\label{Supersimpl2}
\end{figure}

Figures \ref{Supersimpl2}$(a)$ and $(b)$ represent the chord diagrams associated to $D[a_1=1, b_1=1]$ and $D[a_1=1, b_1=0]$, respectively. Notice that $f_1$ is free and $e_1$ is $2$-free in $D[a_1=1, b_1=1](\uno)$, hence $M_{D[a_1=1, b_1=1]}$ is contractible by Lemma \ref{lemma:2free} and $M_{D[a_1=1]} \simeq X_{D[a_1=1, b_1=0]}$. The Lando graph $H$ associated to $D[a_1=1, b_1=0]$ is depicted in Figure \ref{Supersimpl2}$(c)$, and since vertex $f_1$ dominates $f_2$ and vertex $e_2$ dominates $e_3$, we have the following equivalence relations:
$$|I_H| \simeq |I_{H-e_2-f_1}| = |I_{L_{4n-5}}| \simeq \begin{cases}
* & \text{if  } n \equiv 2 \mod 3, \\
S^{\lfloor \frac{4n-5}{3}\rfloor} & \text{otherwise},
\end{cases}$$

and by Corollary \ref{corID=XD} we get:
$$|M_{D[a_1=1]}| \simeq |X_{D[a_1=1, b_1=0]}| \simeq \begin{cases}
* & \text{if  } n \equiv 2 \mod 3, \\
S^{4n-4 - \lfloor \frac{4n-5}{3}\rfloor} & \text{otherwise}.
\end{cases}$$

Finally, we compute $|M_D|$: 
\begin{itemize}
\item[-] If $n \equiv 0$ (mod $3$), then the skein sequence \eqref{proofnotpolygon} becomes: 
$$S^{\frac{8n}{3}-2} \lra S^{\frac{8n}{3}-1} \lra |M_{D}|,$$
so $|M_D| \simeq  S^{\frac{8n}{3}-1} \vee S^{\frac{8n}{3}-1}.$

\item[-] If $n \equiv 1$ (mod $3$), then the skein sequence \eqref{proofnotpolygon} becomes: 
$$S^{\frac{8n+1}{3}-2} \lra \ast \lra |M_{D}|,$$
so $|M_D| \simeq S^{\frac{8n+1}{3}-1}$.

\item[-] If $n \equiv 2$ (mod $3$), then the skein sequence \eqref{proofnotpolygon} becomes: 
$$\ast \lra S^{\frac{8n+2}{3}-2} \lra |M_{D}|,$$
so $|M_D| \simeq S^{\frac{8n+2}{3}-2}$.
\end{itemize}
\end{proof}

\begin{theorem}
Let $D$ be a diagram so that the associated chord diagram $D(\uno)$ contains more than one monochord, all of them 2-free . Then $|M_D|$ is homotopy equivalent to a wedge of spheres.
\end{theorem}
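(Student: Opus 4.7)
The plan is to combine the splitting of Corollary~\ref{soloZ} with the simplification lemmas of Section~\ref{skein}, so as to reduce the theorem to the case of a simple diagram, already handled earlier in this section.

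First, apply Corollary~\ref{soloZ} to obtain
$$|M_D|\simeq \bigvee_{[b]\in B}|Z^b_D|,$$
so it suffices to treat each summand. Fix a class $[b]\in B$ and let $z_1,z_2$ be the two circles of $D(\uno)$ joined by the bichords of $[b]$. Any monochord of $D(\uno)$ that is not attached to $z_1\cup z_2$, or attached but not forming an alternating triple with some bichord of $[b]$, is by definition $b$-free; by Lemma~\ref{lemma:2free}(2), the presence of such a chord forces $|Z^b_D|$ to be contractible. Thus in the remaining case every monochord lives on $z_1\cup z_2$ and is $b$-incident, so the computation of $|Z^b_D|$ depends only on the two-circle sub-diagram formed by $z_1$, $z_2$ and the chords among them.

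Next, I would iteratively apply the reductions of Section~\ref{skein}: Lemma~\ref{lemma:noregions1} removes a nested $2$-free monochord at the cost of a suspension, and Lemma~\ref{lemma:equivalent} removes a pair of equivalent bichords, again at the cost of a suspension. Both operations strictly decrease the number of chords, so the process terminates at a two-circle diagram $D'$ satisfying all four conditions of Definition~\ref{simple}, with
$$|Z^b_D|\simeq \Sigma^{k_b}|M_{D'}|$$
for some $k_b\geq 0$. If $D'$ ends up with at most one monochord, the realization is covered by the earlier cases of this section; otherwise $D'$ is simple and one dichotomizes on the invariant $|d_e|$. If some monochord $e$ satisfies $|d_e|=1$, Lemma~\ref{lemma:reduccion3} shows $|M_{D'}|$ is contractible or a single sphere; if $2\leq|d_e|\leq 3$ for every $e$, then $D'$ is super-simple and Proposition~\ref{prop_supersimple} gives $|M_{D'}|$ as a wedge of spheres. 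Suspensions and wedges of wedges of spheres are wedges of spheres, and reassembling over $[b]\in B$ finishes the proof.

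The main obstacle is verifying the first reduction rigorously: one has to check that $Z^b_D$ is genuinely insensitive, up to homotopy, to chords outside $z_1\cup z_2$ and to $b$-free chords, and that the operations of Lemmas~\ref{lemma:noregions1} and~\ref{lemma:equivalent} interact coherently with the $[b]$-decomposition so that each $|Z^b_D|$ really is identified with an iterated suspension of $|M_{D'}|$ for a simple~$D'$. Once this bookkeeping is in place, the conclusion follows directly from the classification of simple and super-simple diagrams in Lemmas~\ref{lemma:reduccion3}, \ref{lemma:disk-polygon} and~\ref{lemma:disk-disk}.
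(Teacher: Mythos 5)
Your proposal is correct and follows essentially the same route as the paper: the decomposition via Corollary~\ref{soloZ}, the reduction to simple diagrams using Lemmas~\ref{lemma:2free}, \ref{lemma:noregions1} and~\ref{lemma:equivalent}, and the final dichotomy between Lemma~\ref{lemma:reduccion3} (some $|d_e|=1$) and Proposition~\ref{prop_supersimple} (super-simple case). Your explicit acknowledgment of the bookkeeping needed to see that each $Z^b_D$ only depends on the two-circle sub-diagram is a point the paper also leaves implicit, so there is no gap relative to the published argument.
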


\begin{proof}
Since all monochords are $2$-free, then $M_D \simeq \bigvee_{[b]\in B} Z^b_D$, by Corollary \ref{soloZ}, so we consider each pair of discs in $D(\uno)$ independently. Moreover, we can remove nested monochords and equivalent bichords when computing $M_D$ at the expense of taking suspensions (Lemmas \ref{lemma:noregions1} and \ref{lemma:equivalent}). In addition, we can assume that $D(\uno)$ contains no $b$-free monochords for any bichord $b$ (otherwise, $Z_D^b$ is contractible by Lemma \ref{lemma:2free}\eqref{2free:3}). Hence, we just need to prove the statement for simple diagrams. Lemma \ref{lemma:reduccion3} and Proposition \ref{prop_supersimple} complete the proof. 
\end{proof}


\bibliographystyle{amsalpha}
\bibliography{biblio-article}

\end{document}